\newtheorem{theorem}{Theorem}[section]
\newtheorem{remark}[theorem]{Remark}
\newtheorem{lemma}[theorem]{Lemma}
\newtheorem{proposition}[theorem]{Proposition}
\newtheorem{definition}[theorem]{Definition}
\numberwithin{equation}{section}
\begin{document}
\title{Multi-dimensional reflected BSDEs driven by $G$-Brownian motion with diagonal generators}
\author{Hanwu Li\textsuperscript{a,b}\ \ and \ \ Guomin Liu\textsuperscript{c,}\footnote{Corresponding author.\\
		\indent\ \ \textit{E-mail addresses:} lihanwu@sdu.edu.cn (H. Li),   gmliu@nankai.edu.cn (G. Liu).}\\	
	\\\footnotesize\textsuperscript{a}\textit{Research Center for Mathematics and Interdisciplinary Sciences, Shandong University,
		 Qingdao 266237, China}
	\\\noindent{\footnotesize\noindent\textsuperscript{b}\textit{Frontiers Science Center for Nonlinear Expectations (Ministry of Education), Shandong} }\\{\footnotesize \textit{University, Qingdao 266237, China}}
	\\\footnotesize\textsuperscript{c}\textit{School of Mathematical Sciences, Nankai University, Tianjin 300071, China}}
\date{}
\maketitle
\begin{abstract}
We consider the well-posedness problem of multi-dimensional reflected backward
stochastic differential equations driven by $G$-Brownian motion ($G$-BSDEs)
with diagonal generators. Two methods, including the penalization
method and the Picard iteration argument, are provided to prove the existence and
uniqueness of the solutions. We also study its connection with the obstacle
problem of a system of fully nonlinear PDEs.
\end{abstract}

\smallskip

{\textbf{Keywords}}: $G$-expectation, $G$-Brownian motion, multi-dimensional
BSDEs, fully nonlinear PDEs.

\smallskip

{\textbf{MSC2010 subject classification}}: 60H10; 60H30

\smallskip

%{\textbf{JEL classification}}:

\section{Introduction}

Pardoux and Peng \cite{PP} first introduced the nonlinear backward
stochastic differential equations (BSDEs) taking the following form
\[
Y_{t}=\xi+\int_{t}^{T}f(s,Y_{s},Z_{s})ds-\int_{t}^{T}Z_{s}dB_{s},
\]
where the solution is a pair of adapted processes $(Y,Z)$.
The BSDE theory has attracted a great deal of attention since it has wide
applications in partial differential equations (PDEs), stochastic control and
financial mathematics (e.g., \cite{EPQ97,PP92,P93}). One of the
most important extensions of BSDE theory is the reflected BSDEs (see
\cite{KKPPQ}), which means that the solution $Y$ of the equation is forced to
stay above a given process, called the obstacle. To this end, a non-decreasing
process should be added to push the solution upward, which behaves in a
minimal way such that it satisfies the Skorohod condition. For some further
developments, one may refer to \cite{BCFE,EPQ,GIOOQ,KLQT,LX,PX,WX} and the references therein.

It is worth pointing out that all the papers listed above are considered under the classical expectation framework. Recently, Peng systematically introduced a time-consistent nonlinear
expectation theory, i.e., the $G$-expectation theory, see \cite{P07a,P08a,P10}%
, which is a useful tool to study the financial problems under volatility
uncertainty and the probabilistic representation for fully nonlinear PDEs. In this framework, a new type of
Brownian motion with independent and stationary increments, called
$G$-Brownian motion, was constructed and the corresponding It\^{o}'s integral
was established. Then, Hu et al. \cite{HJPS1} investigated the following type of
backward stochastic differential equations driven by $G$-Brownian motion
($G$-BSDEs):
\[
Y_{t}=\xi+\int_{t}^{T}f(s,Y_{s},Z_{s})ds+\int_{t}^{T}g(s,Y_{s},Z_{s})d\langle
B\rangle_{s}-\int_{t}^{T}Z_{s}dB_{s}-(K_{T}-K_{t}).
\]
Compared with the classical case, there exists an additional non-increasing
$G$-martingale $K$ in this equation and the quadratic variation process of $B$
is not deterministic. The authors obtained the existence and uniqueness result
of the above $G$-BSDE. The comparison theorem, Feymann-Kac formula and Girsanov
transformation were established in the companion paper \cite{HJPS2}. One important feature is that the solution $Y$ in \cite{HJPS1} is required to be
one-dimensional. Liu \cite{Liu-stochastics} extended the results to the
multi-dimensional case where the generators are assumed to be diagonal with respect to the $z$-term. After that, Hu et al. \cite{HTW2022} consider the multi-dimensional case with diagonally quadratic generators.

The reflected $G$-BSDE with a lower obstacle $S$ has been established by Li, Peng and Soumana Hima \cite{LPSH}, where the solution is a triple of processes $(Y,Z,A)$ with dynamics 
\[
Y_{t}=\xi+\int_{t}^{T}f(s,Y_{s},Z_{s})ds+\int_{t}^{T}g(s,Y_{s},Z_{s})d\langle
B\rangle_{s}-\int_{t}^{T}Z_{s}dB_{s}+(A_{T}-A_{t}),
\]
such that $Y_{t}\geq S_{t}$ and $\{-\int_{0}^{t}(Y_{s}-S_{s})dA_{s}\}_{t\in\lbrack0,T]}$ is a
non-increasing $G$-martingale. In fact, the non-decreasing process $A$ can be
regarded as the discrepancy between the non-decreasing process aiming to push
the solution $Y$ upward and the non-increasing $G$-martingale appearing in
$G$-BSDEs. Due to the appearance of the non-increasing $G$-martingale, the process $A$ in the reflected $G$-BSDE with an upper obstacle is not monotone, which makes this kind of reflected $G$-BSDEs significantly different from the lower obstacle case. Applying a variant comparison theorem, Li and Peng \cite{lp} shows that the solution constructed by the penalization method is the maximal one. For the double obstacles case, i.e., the solution lies between two given processes, we may refer to the paper \cite{LS}. It should be pointed out that the first component $Y$ of reflected $G$-BSDEs in \cite{lp,LPSH,LS} is one-dimensional.

The objective of this paper is to investigate the multi-dimensional reflected $G$-BSDEs. The main difficulty in the
multi-dimensional case is that, due to the nonlinearity of $G$-expectation, the linear combination of $G$-martingales is no longer a $G$-martingale. To overcome this difficulty, we make use of a diagonal structure so that each component of the equation can be independently considered under appropriate circumstances. More precisely, let $x^{i}$ be the $i$-th component of a vector
$x\in\mathbb{R}^{k}$, $i=1,\cdots,k$. The multi-dimensional reflected $G$-BSDE
is of the following type: for  $i=1,\cdots,k$,
\[%
\begin{cases}
Y_{t}^{i}=\xi^{i}+\int_{t}^{T}f^{i}(s,Y_{s},Z_{s}^{i})ds+\int_{t}^{T}%
g^i(s,Y_{s},Z_{s}^{i})d\langle B\rangle_{s}-\int_{t}^{T}Z_{s}^{i}dB_{s}%
+(A_{T}^{i}-A_{t}^{i}),\\
Y_{t}^{i}\geq S_{t}^{i},\ 0\leq t\leq T,\\
\{-\int_{0}^{t}(Y_{s}^{i}-S_{s}^{i})dA_{s}^{i}\}_{t\in\lbrack0,T]}\text{ is a
non-increasing $G$-martingale,}%
\end{cases}
\]
where $Y=(Y^{1},\cdots,Y^{k})^{T}$. Note that the $z$-term of the $i$-th
components of generators $f^{i}$, $g^{i}$ here only depend on $z^{i}$, which is
the point where \textit{diagonal} refers to.

The existence and uniqueness result is established by two different approaches.
The first one provided in Subsection 3.2 is the penalization method, a scheme that is frequently used for constructing the solutions of reflected equations. By a suitable modification to our
new situation, this construction is still valid. Roughly speaking, the $Y$-term
of the solutions to multi-dimensional reflected $G$-BSDEs can be approximated by a
family  of solutions to multi-dimensional $G$-BSDEs. However,  unlike the one-dimensional case, the comparison theorem does not apply in general since a kind of structure condition is needed for the multi-dimensional case (see Theorems \ref{Myth2-3} and \ref{Myth4.1}). To tackle this, we shall make use of a method of linearization to prove the convergence of the approximation sequence. The second one given in Subsection 3.3
is the Picard iteration method motivated by the study of the standard multi-dimensional
$G$-BSDEs in \cite{Liu-stochastics}. We make use of a contraction argument for
the $Y$-term to derive the local well-posedness result, in virtue of the a
priori estimates for  reflected $G$-BSDEs. The global situation is then
obtained by a backward iteration of the local ones. As an application, we
provide a probabilistic representation for solutions of a system of fully
nonlinear PDEs with obstacle constraints, based on the construction via penalization.%in the above penalization procedure.

This paper is organized as follows. In Section 2, we recall some basic notions of $G$-expectation,  multi-dimensional $G$-BSDEs and one-dimensional reflected $G$-BSDEs. Section 3 is devoted to the study of  well-posedness of multi-dimensional reflected $G$-BSDEs, based on the Picard iteration and the penalization method, respectively. We formulate our probabilistic representation for fully nonlinear PDE systems with obstacles in Section 4.

\section{Preliminaries}

We first recall some basic results about $G$-expectation, multi-dimensional
$G$-BSDEs and one-dimensional reflected $G$-BSDEs, which are needed in the
sequel  and the readers may refer to the papers
\cite{DHP11,LPSH,LS,Liu-stochastics,P07a,P08a,P10,S11}
 for more details.
For convenience, every element $x \in\mathbb{R}^{k}$ is identified as a column
vector with $l$-th component $x^{l}$, and the corresponding Euclidian norm and
Euclidian scalar product are denoted by $|\cdot|$ and $\langle\cdot
,\cdot\rangle$, respectively. For two vectors $a$ and $b$ in $\mathbb{R}^{k}$, we say that $a\geq b$ if
$a^{l}\geq b^{l}$, for each $1\leq l\leq k$.

\subsection{$G$-expectation}
Let $\Omega=C_{0}([0,\infty);\mathbb{R})$, the space of real-valued continuous
functions starting from the origin, be endowed with the distance
\[
\rho(\omega^{1},\omega^{2}):=\sum_{i=1}^{\infty}2^{-i}[(\max_{t\in
[0,i]}|\omega_{t}^{1}-\omega_{t}^{2}|)\wedge1], \text{ for } \omega^{1}%
,\omega^{2}\in\Omega.
\]
Let $B$ be the canonical process on $\Omega$. Set
\[
L_{ip}(\Omega):=\{\varphi(B_{t_{1}},...,B_{t_{n}}):\ n\in\mathbb{N}%
,\ t_{1},\cdots,t_{n}\in\lbrack0,\infty),\ \varphi\in C_{b,Lip}(\mathbb{R}%
^{n})\},
\]
where $C_{b,Lip}(\mathbb{R}^{n})$ denotes the set of bounded Lipschitz
functions on $\mathbb{R}^{n}$.  Let $G:\mathbb{R}\rightarrow
\mathbb{R}$ be defined by
\[
G(a)=\frac{1}{2}(\bar{\sigma}^{2}a^{+}-\underline{\sigma}^{2}a^{-}),
\]
for $0<\underline{\sigma}^{2}\leq\bar{\sigma}^{2}<\infty.$ The (conditional)
$G$-expectation for $\xi\in L_{ip}(\Omega)$ can be calculated as follows.
Assume
\[
\xi=\varphi(B_{{t_{1}}},B_{t_{2}},\cdots,B_{t_{n}}).
\]
Then, for $t\in\lbrack t_{k-1},t_{k})$, $k=1,\cdots,n$, we define
\[
\hat{\mathbb{E}}_{t}[\varphi(B_{{t_{1}}},B_{t_{2}},\cdots,B_{t_{n}}%
)]=u_{k}(t,B_{t};B_{t_{1}},\cdots,B_{t_{k-1}}),
\]
where, for any $k=1,\cdots,n$, $u_{k}(t,x;x_{1},\cdots,x_{k-1})$ is a function
of $(t,x)$ parameterized by $(x_{1},\cdots,x_{k-1})$ such that it solves the
following fully nonlinear PDE defined on $[t_{k-1},t_{k})\times\mathbb{R}$:
\[
\partial_{t}u_{k}+G(\partial_{x}^{2}u_{k})=0
\]
with terminal conditions
\[
u_{k}(t_{k},x;x_{1},\cdots,x_{k-1})=u_{k+1}(t_{k},x;x_{1},\cdots
,x_{k-1},x),\ k<n
\]
and $u_{n}(t_{n},x;x_{1},\cdots,x_{n-1})=\varphi(x_{1},\cdots,x_{n-1},x)$. The
$G$-expectation of $\xi$ is defined by  $\hat{\mathbb{E}}[\xi]:=\hat{\mathbb{E}}_{0}[\xi]$. We call $(\Omega,L_{ip}(\Omega),\hat{\mathbb{E}})$
 the $G$-expectation space.

For each $p\geq1$, the completion of $L_{ip} (\Omega)$ under the norm
$\Vert\xi\Vert_{L_{G}^{p}}:=(\hat{\mathbb{E}}[|\xi|^{p}])^{1/p}$ is denoted by
$L_{G}^{p}(\Omega)$. The conditional $G$-expectation $\mathbb{\hat{E}}%
_{t}[\cdot]$ can be extended continuously to the completion $L_{G}^{p}%
(\Omega)$. %The canonical process $B$ is a one-dimensional $G$-Brownian motion in this space.
For each fixed $T\geq0$, set $\Omega_{T}=\{\omega_{\cdot\wedge T}:\omega
\in\Omega\}$. We may define $L_{ip}(\Omega_{T})$ and $L_{G}^{p}(\Omega_{T})$
similarly. Besides, Denis et al. \cite{DHP11} proved that the
$G$-expectation has the following representation.

\begin{theorem}
[\cite{DHP11}]\label{the1.1} There exists a weakly compact set $\mathcal{P}$
of probability measures on $(\Omega,\mathcal{B}(\Omega))$ such that
\[
\hat{\mathbb{E}}[\xi]=\sup_{P\in\mathcal{P}}E_{P}[\xi] \text{ for all } \xi
\in{L}_{G}^{1}{(\Omega)}.
\]
$\mathcal{P}$ is called a set that represents $\hat{\mathbb{E}}$.
\end{theorem}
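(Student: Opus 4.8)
The plan is to reconstruct the Denis--Hu--Peng representation by the explicit ``volatility uncertainty'' construction. First I would fix a classical Wiener space $(\Omega^{W},\mathcal{F}^{W},P^{W})$ carrying a standard Brownian motion $W$, and let $\mathcal{A}$ be the collection of all $\mathcal{F}^{W}$-progressively measurable processes $\theta=(\theta_{s})_{s\geq0}$ taking values in $[\underline{\sigma},\bar{\sigma}]$. For $\theta\in\mathcal{A}$ put $B_{t}^{\theta}=\int_{0}^{t}\theta_{s}\,dW_{s}$ and let $P_{\theta}$ be the law of the path $(B_{t}^{\theta})_{t\geq0}$ on $(\Omega,\mathcal{B}(\Omega))$. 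Set $\mathcal{P}_{0}=\{P_{\theta}:\theta\in\mathcal{A}\}$ and let $\mathcal{P}$ be the closure of $\mathcal{P}_{0}$ in the topology of weak convergence; this $\mathcal{P}$ is the candidate representing set.

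Next I would prove the representation first on the subspace $L_{ip}(\Omega)$. Take $\xi=\varphi(B_{t_{1}},\dots,B_{t_{n}})$ and let $u_{1},\dots,u_{n}$ be the cascade of solutions of $\partial_{t}u_{k}+G(\partial_{x}^{2}u_{k})=0$ defining $\hat{\mathbb{E}}[\xi]$. Since $G(a)=\sup_{\underline{\sigma}\leq\sigma\leq\bar{\sigma}}\tfrac{1}{2}\sigma^{2}a$, this is precisely the Hamilton--Jacobi--Bellman equation of the stochastic control problem with value $\sup_{\theta\in\mathcal{A}}E_{P^{W}}[\varphi(B_{t_{1}}^{\theta},\dots,B_{t_{n}}^{\theta})]$. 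Applying It\^{o}'s formula to $u_{k}(s,B_{s}^{\theta})$ on each interval $[t_{k-1},t_{k})$ (after a standard mollification, as the $u_{k}$ are a priori only Lipschitz viscosity solutions) and using $\tfrac{1}{2}\theta_{s}^{2}\partial_{x}^{2}u_{k}\leq G(\partial_{x}^{2}u_{k})$ gives $\hat{\mathbb{E}}[\xi]\geq E_{P_{\theta}}[\xi]$ for every $\theta$; choosing $\theta_{s}$ to nearly attain the maximum in $G(\partial_{x}^{2}u_{k}(s,B_{s}^{\theta}))$ yields the reverse inequality. Hence $\hat{\mathbb{E}}[\xi]=\sup_{P\in\mathcal{P}_{0}}E_{P}[\xi]$, and since $P\mapsto E_{P}[\xi]$ is weakly continuous for $\xi\in L_{ip}(\Omega)\subset C_{b}(\Omega)$, passing to the closure does not change the supremum, so $\hat{\mathbb{E}}[\xi]=\sup_{P\in\mathcal{P}}E_{P}[\xi]$ on $L_{ip}(\Omega)$.

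For weak compactness I would use a uniform Kolmogorov estimate: because $|\theta|\leq\bar{\sigma}$, the Burkholder--Davis--Gundy inequality gives $E_{P_{\theta}}[|B_{t}-B_{s}|^{4}]\leq C\bar{\sigma}^{4}|t-s|^{2}$ with $C$ independent of $\theta$, so $\mathcal{P}_{0}$ is tight on $C_{0}([0,\infty);\mathbb{R})$; by Prokhorov's theorem $\mathcal{P}_{0}$ is relatively weakly compact, hence its closure $\mathcal{P}$ is weakly compact. Finally, to extend the identity from $L_{ip}(\Omega)$ to $L_{G}^{1}(\Omega)$, note that $\xi\mapsto\sup_{P\in\mathcal{P}}E_{P}[\xi]$ is sublinear and, by the identity already proved on $L_{ip}$, satisfies $|\sup_{P}E_{P}[\xi]-\sup_{P}E_{P}[\eta]|\leq\sup_{P}E_{P}[|\xi-\eta|]=\hat{\mathbb{E}}[|\xi-\eta|]$ for $\xi,\eta\in L_{ip}(\Omega)$; thus both functionals are $1$-Lipschitz for $\Vert\cdot\Vert_{L_{G}^{1}}$ and agree on the dense subspace $L_{ip}(\Omega)$, hence agree on all of $L_{G}^{1}(\Omega)$.

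The main obstacle is the two-sided equality on $L_{ip}(\Omega)$, i.e.\ identifying the nonlinear PDE cascade with the stochastic control value function. The subtlety is regularity: the $u_{k}$ are known only to be Lipschitz and to solve the PDE in the viscosity sense, so It\^{o}'s formula cannot be applied directly, and one must either mollify and control the resulting errors, or argue via the comparison principle for the HJB equation together with the dynamic programming principle for the control problem. A secondary point is that $\mathcal{P}_{0}$ need not itself be weakly closed, which is exactly why the representing set is taken to be its closure; one has to verify, as above, that enlarging $\mathcal{P}_{0}$ to $\mathcal{P}$ leaves $\sup_{P}E_{P}[\xi]$ unchanged.
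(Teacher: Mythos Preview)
The paper does not give its own proof of this statement; it is quoted as a preliminary result from Denis--Hu--Peng \cite{DHP11}, so there is no in-paper argument to compare against. Your sketch is essentially the construction carried out in that reference and is correct in outline: build $\mathcal{P}$ as the weak closure of the laws of $\int_{0}^{\cdot}\theta_{s}\,dW_{s}$ with $\theta\in[\underline{\sigma},\bar{\sigma}]$, identify $\hat{\mathbb{E}}$ with the HJB value on $L_{ip}(\Omega)$, prove tightness via a uniform Kolmogorov bound, and extend by density. Two minor remarks. First, because $\underline{\sigma}>0$ the $G$-heat equation is uniformly parabolic, so by Evans--Krylov interior regularity the cascade functions $u_{k}$ are actually $C^{2,\alpha}$ away from the terminal time; this makes the It\^{o} verification cleaner than your mollification step suggests, though your caution is not wrong. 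Second, the ``nearly optimal $\theta$'' for the reverse inequality has to be built as a feedback control depending on the controlled path itself, so a measurable-selection/DPP argument is hiding behind that sentence; this is standard but worth flagging explicitly.
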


Let $\mathcal{P}$ be a weakly compact set that represents $\hat{\mathbb{E}}$.
For this $\mathcal{P}$, we define the capacity
\[
c(A):=\sup_{P\in\mathcal{P}}P(A),\ A\in\mathcal{B}(\Omega).
\]
A set $A\in\mathcal{B}(\Omega)$ is called polar if $c(A)=0$. A property
holds $``quasi$-$surely"$ (q.s.) if it holds outside a polar set. In the
following, we do not distinguish two random variables $X$ and $Y$ if
$X=Y$, q.s.

For $\xi\in L_{ip}(\Omega_{T})$, let $\mathcal{E}(\xi)=\hat{\mathbb{E}}%
[\sup_{t\in[0,T]}\hat{\mathbb{E}}_{t}[\xi]]$ and $\mathcal{E}$ is called the
$G$-evaluation. For $p\geq1$ and $\xi\in L_{ip}(\Omega_{T})$, define
$\|\xi\|_{p,\mathcal{E}}=[\mathcal{E}(|\xi|^{p})]^{1/p}$ and denote by
$L_{\mathcal{E}}^{p}(\Omega_{T})$ the completion of $L_{ip}(\Omega_{T})$ under
$\|\cdot\|_{p,\mathcal{E}}$. The following theorem can be regarded as Doob's
maximal inequality under $G$-expectation.

\begin{theorem}
[\cite{S11}]\label{the1.2} For any $\alpha\geq1$ and $\delta>0$,
$L_{G}^{\alpha+\delta}(\Omega_{T})\subset L_{\mathcal{E}}^{\alpha}(\Omega
_{T})$. More precisely, for any $1<\gamma<\beta:=(\alpha+\delta)/\alpha$,
$\gamma\leq2$, we have
\[
\|\xi\|_{\alpha,\mathcal{E}}^{\alpha}\leq\gamma^{*}\{\|\xi\|_{L_{G}%
^{\alpha+\delta}}^{\alpha}+14^{1/\gamma} C_{\beta/\gamma}\|\xi\|_{L_{G}%
^{\alpha+\delta}}^{(\alpha+\delta)/\gamma}\},\quad\forall\xi\in L_{ip}%
(\Omega_{T}),
\]
where $C_{\beta/\gamma}=\sum_{i=1}^{\infty}i^{-\beta/\gamma}$, $\gamma
^{*}=\gamma/(\gamma-1)$.
\end{theorem}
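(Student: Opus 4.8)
The plan is to use the probabilistic representation of Theorem~\ref{the1.1} to reduce the statement to a pathwise, measure-by-measure maximal inequality. Since $\hat{\mathbb{E}}_t[|\xi|^\alpha]\ge 0$ and $\mathcal{E}(|\xi|^\alpha)=\hat{\mathbb{E}}[\sup_{t\in[0,T]}\hat{\mathbb{E}}_t[|\xi|^\alpha]]$, it suffices to work with a single nonnegative random variable $\eta:=|\xi|^\alpha$ such that $\eta^\beta=|\xi|^{\alpha+\delta}\in L_G^1(\Omega_T)$, where $\beta=(\alpha+\delta)/\alpha>1$, and to bound $\hat{\mathbb{E}}[\sup_t\hat{\mathbb{E}}_t[\eta]]$ by a constant times $(\hat{\mathbb{E}}[\eta^\beta])^{1/\beta}$. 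Using the continuous version of $t\mapsto\hat{\mathbb{E}}_t[\eta^\beta]$ and a monotone approximation along refining partitions, I would reduce to a bound, uniform in the partition, on $\hat{\mathbb{E}}[\max_{0\le i\le N}\hat{\mathbb{E}}_{t_i}[\eta^\beta]]$-type quantities, and pass to the limit at the end by the monotone convergence property of $\hat{\mathbb{E}}$.

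The core is a Doob-type estimate under each $P\in\mathcal{P}$. Put $Y_t:=\hat{\mathbb{E}}_t[\eta^\beta]$. By the tower property together with the (standard) fact that the conditional $G$-expectation dominates every $P$-conditional expectation, $\hat{\mathbb{E}}_t[\,\cdot\,]\ge E_P[\,\cdot\mid\mathcal{F}_t]$ $P$-a.s.\ for $P\in\mathcal{P}$, the process $Y$ is a nonnegative $P$-supermartingale with $Y_0=\hat{\mathbb{E}}[\eta^\beta]$ and $Y_T=\eta^\beta$. Applying discrete optional stopping to $\tau_\lambda=\min\{t_i: Y_{t_i}>\lambda\}$ gives $\lambda P(\max_i Y_{t_i}>\lambda)\le Y_0$ for every partition and every $P$; taking the supremum over $P$ and over partitions, and using continuity of $c(\cdot)$ along increasing sequences of sets, yields $c(\sup_t Y_t>\lambda)\le\hat{\mathbb{E}}[\eta^\beta]/\lambda$. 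On the other hand, conditional Jensen gives $\hat{\mathbb{E}}_t[\eta]\le(\hat{\mathbb{E}}_t[\eta^\beta])^{1/\beta}=Y_t^{1/\beta}$, hence $\sup_t\hat{\mathbb{E}}_t[\eta]\le(\sup_t Y_t)^{1/\beta}$.

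I would then conclude with the layer-cake inequality $\hat{\mathbb{E}}[W]\le\int_0^\infty c(W>\lambda)\,d\lambda$, which is itself immediate from Theorem~\ref{the1.1}, applied to $W=(\sup_t Y_t)^{1/\beta}$: writing $M=\hat{\mathbb{E}}[\eta^\beta]$,
\[
\hat{\mathbb{E}}\big[(\sup_t Y_t)^{1/\beta}\big]\le\int_0^\infty c\big(\sup_t Y_t>s^\beta\big)\,ds\le s_0+\frac{M}{\beta-1}\,s_0^{1-\beta},
\]
and optimising in $s_0$ (take $s_0=M^{1/\beta}$) gives $\tfrac{\beta}{\beta-1}M^{1/\beta}$. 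Translating back through $\eta=|\xi|^\alpha$ and $\beta-1=\delta/\alpha$, this reads $\|\xi\|_{\alpha,\mathcal{E}}^\alpha\le\tfrac{\alpha+\delta}{\delta}\|\xi\|_{L_G^{\alpha+\delta}}^\alpha$, which already yields the inclusion $L_G^{\alpha+\delta}(\Omega_T)\subset L_{\mathcal{E}}^\alpha(\Omega_T)$.

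The hard part is precisely the \emph{uniform-in-$P$} control in the second paragraph. Because $\hat{\mathbb{E}}$ is only sublinear, the naive Doob argument performed directly with $\hat{\mathbb{E}}$ collapses: one cannot bound $\sum_i\hat{\mathbb{E}}[\eta^\beta\mathbf{1}_{\{\tau_\lambda=t_i\}}]$ by $\hat{\mathbb{E}}[\eta^\beta]$, since the disjoint events may be optimised by different measures. The route above sidesteps this by freezing $P$ first and taking the supremum only afterward, at the price of invoking the domination property of the conditional $G$-expectation. Song's original proof in \cite{S11} instead avoids that property, which is why it produces the more delicate two-term bound with explicit constants $\gamma^*=\gamma/(\gamma-1)$ and $14^{1/\gamma}C_{\beta/\gamma}$: there one estimates the level sets $\{\max_i\hat{\mathbb{E}}_{t_i}[\eta]>\lambda\}$ over a dyadic/integer scale of thresholds, combines a supermartingale-type bound at each scale with the truncation $\eta=(\eta\wedge\lambda)+(\eta-\lambda)^+$ and Markov's inequality on the tail part, and sums the resulting series — convergent exactly because $\beta/\gamma>1$, giving $C_{\beta/\gamma}=\sum_{i\ge1}i^{-\beta/\gamma}$ — finishing with a Hölder step (exponent $\gamma$, whence $\gamma^*$); the necessity of the strictly larger integrability exponent $\alpha+\delta$ rather than $\alpha$ is the price of this sublinearity.
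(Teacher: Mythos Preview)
The paper does not prove this theorem; it is quoted from Song \cite{S11} as a preliminary result and no argument is given. There is therefore nothing in the paper to compare against directly.

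Your argument is essentially correct and in fact yields a cleaner one-term bound $\|\xi\|_{\alpha,\mathcal{E}}^\alpha \le \tfrac{\alpha+\delta}{\delta}\|\xi\|_{L_G^{\alpha+\delta}}^\alpha$, which is all the paper actually uses downstream. The chain of steps --- $Y_t=\hat{\mathbb{E}}_t[\eta^\beta]$ is a $G$-martingale, hence a nonnegative $P$-supermartingale for each $P\in\mathcal{P}$ via the domination $\hat{\mathbb{E}}_t[\cdot]\ge E_P[\cdot\mid\mathcal{F}_t]$; the weak maximal inequality $\lambda P(\sup_t Y_t>\lambda)\le Y_0$ for nonnegative supermartingales; conditional Jensen for the concave increasing map $x\mapsto x^{1/\beta}$; and the layer-cake bound $\hat{\mathbb{E}}[W]\le\int_0^\infty c(W>\lambda)\,d\lambda$ --- are all valid. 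Two caveats: you should make explicit that $Y$ admits a continuous modification so that $\sup_t Y_t$ is well defined and the discrete-to-continuous passage is clean; and, as you yourself note, you establish only the inclusion $L_G^{\alpha+\delta}(\Omega_T)\subset L_{\mathcal{E}}^\alpha(\Omega_T)$, not the precise displayed inequality with the constants $\gamma^*$, $14^{1/\gamma}$, $C_{\beta/\gamma}$. Your final paragraph is an accurate description of how Song's original proof differs: by working directly under $\hat{\mathbb{E}}$ without the domination property (which was not yet a standard tool at the time of \cite{S11}), one is forced into the truncation-plus-series argument that produces the two-term bound. What your route buys is simplicity and a sharper constant; what Song's buys is self-containment within the intrinsic $G$-framework.
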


For $T>0$ and $p\geq1$, the following spaces will be frequently used in this paper.

\begin{itemize}
\item $M_{G}^{0}(0,T):=\{\eta: \eta_{t}(\omega)=\sum_{j=0}^{N-1}\xi_{j}%
(\omega)\mathbf{1}_{[t_{j},t_{j+1})}(t),$ where $\xi_{j}\in L_{ip}%
(\Omega_{t_{j}})$, $t_{0}\leq\cdots\leq t_{N}$ is a partition of $[0,T]\}$;

\item $M_{G}^{p}(0,T)$ is the completion of $M_{G}^{0}(0,T)$ under the norm
$\Vert\eta\Vert_{M_{G}^{p}}:=(\mathbb{\hat{E}}[\int_{0}^{T}|\eta_{s}%
|^{p}ds])^{1/p}$;

\item $H_{G}^{p}(0,T)$ is the completion of $M_{G}^{0}(0,T)$ under the
norm $\|\eta\|_{H_{G}^{p}}:=\{\hat{\mathbb{E}}[(\int_{0}^{T}|\eta_{s}%
|^{2}ds)^{p/2}]\}^{1/p}$;

\item $S_{G}^{0}(0,T)=\{h(t,B_{t_{1}\wedge t}, \ldots,B_{t_{n}\wedge t}%
):t_{1},\ldots,t_{n}\in[0,T],h\in C_{b,Lip}(\mathbb{R}^{n+1})\}$;

\item $S_{G}^{p}(0,T)$ is the completion of $S_{G}^{0}(0,T)$ under the norm
$\Vert\eta\Vert_{S_{G}^{p}}=\{\hat{\mathbb{E}}[\sup_{t\in\lbrack0,T]}|\eta
_{t}|^{p}]\}^{1/p}$;

\item $\mathcal{A}_{G}^{p}(0,T)$ is the collection of processes $K\in S_{G}%
^{p}(0,T)$ such that $K$ is a non-increasing $G$-martingale with $K_{0}=0$.
\end{itemize}

We denote by $\langle B\rangle$ the quadratic variation process of the
$G$-Brownian motion $B$. For two processes $\eta\in M_{G}^{p}(0,T)$ and
$\zeta\in H_{G}^{p}(0,T)$, Peng \cite{P10} and Li and Peng \cite{LP} established the $G$-It\^{o} integrals
$\int_{0}^{\cdot}\eta_{s}d\langle B\rangle_{s}$ and $\int_{0}^{\cdot}\zeta
_{s}dB_{s}$.

\subsection{Multi-dimensional $G$-BSDEs}
 We denote by $M_{G}%
^{p}(0,T;\mathbb{R}^{k})$ the set of $k$-dimensional stochastic process
$X=(X^{1},\cdots,X^{k})$ 
such that $X^{l}\in M_{G}^{p}(0,T)$, $1\leq l\leq k$, and
we also define ${S}_{G}^{\alpha}(0,T;\mathbb{R}^{k})$, $H_{G}^{\alpha
}(0,T;\mathbb{R}^{k})$, $\mathcal{A}_{G}^{\alpha}(0,T;\mathbb{R}^{k})$
and $L_{G}^{\beta}(\Omega_{T};\mathbb{R}^{k})$ similarly. Consider the
following type of $k$-dimensional $G$-BSDE with diagonal generators on the
interval $[0,T]$:
\begin{equation}
Y_{t}^{l}=\xi^{l}+\int_{t}^{T}f^{l}(s,Y_{s},Z_{s}^{l})ds+\int_{t}^{T}
g^{l}(s,Y_{s},Z_{s}^{l})d\langle B\rangle_{s}-\int_{t}^{T} Z_{s}^{l}%
dB_{s}-(K_{T}^{l}-K_{t}^{l}),\ 1\leq l\leq k. \label{my1}%
\end{equation}
Here by \textit{diagonal} we mean that in the generators%
\[
f^{l}(t,\omega,y,z^{l}),g^{l}(t,\omega,y,z^{l}):[0,T]\times\Omega_{T}%
\times\mathbb{R}^{k}\times\mathbb{R}\rightarrow\mathbb{R},\ \ \forall1\leq
l\leq k,
\]
the $z$ parts of the $l$-th components $f^{l},g^{l}$ only depend on $z^{l}$.
We impose that

\begin{itemize}
\item[(A1)] there is some constant $\beta>2$ such that for each $y\in \mathbb{R}^k,z\in\mathbb{R}$,
$f^{l}(\cdot,\cdot,y,z),g^{l}(\cdot,\cdot,y,z)\in M_{G}^{\beta}(0,T)$, $1\leq l\leq k$;

\item[(A2)] there exists some $L>0$ such that, for each $1\leq l\leq k$, $y_{1},y_{2}%
\in\mathbb{R}^{k},z_{1},z_{2}\in\mathbb{R}$,
\[
|f^{l}(t,y_{1},z_{1})-f^{l}(t,y_{2},z_{2})|+%
|g^{l}(t,y_{1},z_{1})-g^{l}(t,y_{2},z_{2})|\leq L(|y_{1}-y_{2}%
|+|z_{1}-z_{2}|).
\]

\end{itemize}

\begin{theorem}[\cite{Liu-stochastics}]
\label{my17} Suppose $\xi \in L_G^\beta(\Omega_T;\mathbb{R}^k)$ and (A1)-(A2) are satisfied for some $\beta>2$. Then for
any $2\leq\alpha<\beta$, the $G$-BSDE (\ref{my1}) has a unique solution
$(Y,Z,K)\in{S}_{G}^{\alpha}(0,T;\mathbb{R}^{k})\times H_{G}^{\alpha
}(0,T;\mathbb{R}^{k})\times\mathcal{A}_{G}^{\alpha}(0,T;\mathbb{R}^{k})$.
Moreover, $Y\in{M}_{G}^{\beta}(0,T;\mathbb{R}^{k}).$
\end{theorem}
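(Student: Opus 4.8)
The plan is to prove Theorem~\ref{my17} by a Picard iteration that, at each step, exploits the diagonal structure to decouple the system into $k$ independent one-dimensional $G$-BSDEs, to which the existence, uniqueness and a priori estimates of Hu, Ji, Peng and Song~\cite{HJPS1} apply. Because the linear combination of $G$-martingales need not be a $G$-martingale, the whole argument must be carried out componentwise; this causes no difficulty precisely because, once the vector $Y$ is frozen, each $z^{l}\mapsto f^{l}(\cdot,\cdot,Y_{\cdot},z^{l})$ is still Lipschitz in its single $z$-variable, which is what \emph{diagonal} buys us. I would run the contraction on a short time interval and then extend to $[0,T]$ by a backward iteration of the local construction.

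Concretely, fix $2\le\alpha<\beta$, start from any $Y^{0}\in M_{G}^{\beta}(0,T;\mathbb{R}^{k})$, and given $Y^{n}\in M_{G}^{\beta}(0,T;\mathbb{R}^{k})$ define $Y^{n+1}$ componentwise: by (S1), (S2) and $Y^{n}\in M_{G}^{\beta}$, the map $(s,\omega,z^{l})\mapsto f^{l}(s,\omega,Y^{n}_{s}(\omega),z^{l})$ lies in $M_{G}^{\beta}(0,T)$ for fixed $z^{l}$ and is Lipschitz in $z^{l}$ (and similarly for $g^{l}$), so the one-dimensional $G$-BSDE with terminal value $\xi^{l}$ and these generators has, by \cite{HJPS1}, a unique solution $(Y^{n+1,l},Z^{n+1,l},K^{n+1,l})\in S_{G}^{\alpha}\times H_{G}^{\alpha}\times\mathcal{A}_{G}^{\alpha}$ with moreover $Y^{n+1,l}\in M_{G}^{\beta}$. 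This defines a map $\Phi\colon Y^{n}\mapsto Y^{n+1}$ on $M_{G}^{\beta}(0,T;\mathbb{R}^{k})$ whose fixed points are exactly the $Y$-parts of solutions of (\ref{my1}). Applying the stability estimate for one-dimensional $G$-BSDEs to $Y^{n+1,l}$ and $Y^{n,l}$ --- the two generators being evaluated at the \emph{same} $z^{l}$, so their difference is dominated by $L\,|Y^{n}_{s}-Y^{n-1}_{s}|$ --- gives, on an interval $[T-\delta,T]$, an estimate of the form
\[
\sum_{l=1}^{k}\|Y^{n+1,l}-Y^{n,l}\|_{S_{G}^{\alpha}([T-\delta,T])}^{\alpha}\le C\,\delta^{\alpha}\,\|Y^{n}-Y^{n-1}\|_{S_{G}^{\alpha}([T-\delta,T];\mathbb{R}^{k})}^{\alpha},
\]
with $C$ depending only on $L$, $\alpha$ and the universal constants of the a priori estimates. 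Choosing $\delta$ with $C\delta^{\alpha}<1$ makes $\Phi$ a contraction in the $S_{G}^{\alpha}$-norm on $[T-\delta,T]$, so it has a unique fixed point, which together with the corresponding $(Z^{l},K^{l})$ solves (\ref{my1}) on $[T-\delta,T]$; the same estimate yields local uniqueness.

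Since $\delta$ depends only on $L$ and $\alpha$, not on the terminal data, I would then iterate backwards: using $Y_{T-\delta}\in L_{G}^{\beta}(\Omega_{T-\delta};\mathbb{R}^{k})$ (which holds because $Y\in M_{G}^{\beta}$, equivalently from the one-dimensional regularity) as the new terminal condition on $[(T-2\delta)\vee0,\,T-\delta]$, and so on; after finitely many steps one obtains a solution on all of $[0,T]$ with the stated integrability, and uniqueness follows by concatenating the local uniqueness statements. The main obstacle I expect is the bookkeeping between the two integrability scales --- one needs $Y^{n}\in M_{G}^{\beta}$ as \emph{input} (to keep the frozen generators in $M_{G}^{\beta}$, hence to invoke \cite{HJPS1}) while the contraction naturally lives in the weaker $S_{G}^{\alpha}$-norm --- together with verifying that the one-dimensional a priori and stability estimates hold with constants uniform over the $k$ components and over the backward iteration, and that the hypothesis $\beta>2$ (rather than merely $\beta>1$) is what makes those one-dimensional results applicable here.
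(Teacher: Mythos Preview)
The paper does not prove Theorem~\ref{my17}; it is quoted from \cite{Liu-stochastics}. However, the method of that reference is visible in Section~4.2, where it is adapted to the reflected case, so one can compare against it. Your overall strategy --- freeze the $Y$-vector, decouple into $k$ one-dimensional $G$-BSDEs via \cite{HJPS1}, run a contraction on a short interval, then iterate backwards --- matches the approach there.

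There are two differences worth noting. First, in \cite{Liu-stochastics} (cf.~\eqref{myq4}) the $l$-th one-dimensional problem keeps its own $y^{l}$-variable active and only freezes the other components $U^{j}$, $j\neq l$; you freeze the entire vector. Both work, and yours is slightly simpler bookkeeping. Second, and more importantly, the contraction there is run in the $M_{G}^{\beta}$-norm (Lemma~\ref{myq9} in the reflected version), not in $S_{G}^{\alpha}$: from the one-dimensional stability estimate one gets $\hat{\mathbb{E}}[|Y_{s}^{U,l}-Y_{s}^{\bar U,l}|^{\beta}]\le Ch^{\beta-1}\hat{\mathbb{E}}\int|U-\bar U|^{\beta}$, integrate in $s$, and obtain $\|\Gamma(U)-\Gamma(\bar U)\|_{M_{G}^{\beta}}\le Ch\|U-\bar U\|_{M_{G}^{\beta}}$. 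This choice of norm is exactly what resolves the ``bookkeeping between the two integrability scales'' you flagged: the fixed point is produced directly in $M_{G}^{\beta}$, so its time-$t$ value lies in $L_{G}^{\beta}(\Omega_{t};\mathbb{R}^{k})$ and can serve as terminal datum for the next backward step without any extra argument. Your $S_{G}^{\alpha}$-contraction gives a limit a priori only in $S_{G}^{\alpha}$, and the sentence ``which holds because $Y\in M_{G}^{\beta}$'' is circular as written; it can be repaired (e.g.\ by first proving a uniform $M_{G}^{\beta}$-bound on the iterates, or by rerunning the one-dimensional theory once the fixed point is known), but the $M_{G}^{\beta}$-contraction avoids the issue altogether.
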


Then, we present the comparison theorem for multi-dimensional $G$-BSDEs.
\begin{theorem}[\cite{Liu-stochastics}]
\label{Myth2-3} Given two $G$-BSDEs on the interval $[0,T]$:
\[%
%\begin{split}
  Y_{t}^{l}=\xi^{l}+\int_{t}^{T}f^{l}(s,Y_{s},Z_{s}^{l})ds+\int_{t}^{T}%
g^{l}(s,Y_{s},Z_{s}^{l})d\langle B\rangle_{s}+V_{T}^{l}-V_{t}^{l}-\int_{t}%
^{T}Z_{s}^{l}dB_{s}-(K_{T}^{l}-K_{t}^{l}),
%&  1\leq l\leq k,
%\end{split}
\]
and
\[%
%\begin{split}
 \bar{Y}_{t}^{l}=\bar{\xi}^{l}+\int_{t}^{T}\bar{f}^{l}(s,\bar{Y}_{s},\bar
{Z}_{s}^{l})ds+\int_{t}^{T}\bar{g}^{l}(s,\bar{Y}_{s},\bar{Z}_{s}^{l})d\langle
B\rangle_{s}+\bar{V}_{T}^{l}-\bar{V}_{T}^{l}-\int_{t}^{T}\bar{Z}_{s}^{l}%
dB_{s}-(\bar{K}_{T}^{l}-\bar{K}_{t}^{l}),
%&  1\leq l\leq k.
%\end{split}
\]
where $1\leq l\leq k$. For any $1\leq l\leq k$, suppose that $f^{l}(t,y,z^{l}),\bar{f}^{l}(t,\bar{y},z^{l}),g^{l}%
(t,y,z^{l}),\bar{g}^{l}(t,\bar{y},z^{l})$ satisfy (A1)-(A2), $\xi^{l}%
,\bar{\xi}^{l}\in L_{G}^{\beta}(\Omega_{T})$ and $V_{t}^{l},\bar{V}_{t}^{l}$
are RCLL (right-continuous with left limits) such that $\hat{\mathbb{E}}%
[\sup_{t\in\lbrack0,T]}|V_{t}^{l}|^{\beta}]\vee\hat{\mathbb{E}}[\sup_{t\in
\lbrack0,T]}|V_{t}^{l}|^{\beta}]<\infty,$ for some $\beta>2.$ Assume the
following conditions hold:

\begin{itemize}
\item[(i)] for any $z^{l}%
\in\mathbb{R}$ and $y,\bar{y}\in\mathbb{R}^{k}$ satisfying $y^{j}\geq\bar
{y}^{j}$ for $j\neq l$ and $y^{l}=\bar{y}^{l}$, it holds that $f^{l}%
(t,y,z^{l})\geq\bar{f}^{l}(t,\bar{y},z^{l}),g^{l}(t,y,z^{l})\geq\bar{g}%
^{l}(t,\bar{y},z^{l})$, for $1\leq l\leq k$;

\item[(ii)] $\xi\geq\bar{\xi}$ and $V_{t}^{l}-\bar{V}_{t}^{l}$ is non-decreasing, $1\leq l\leq k$.
\end{itemize}
Then $Y_{t}\geq\bar{Y}_{t}$ for each $t\in\lbrack0,T]$, q.s.

\end{theorem}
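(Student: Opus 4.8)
The plan is to prove the comparison component by component, reducing the multidimensional statement to the one-dimensional comparison theorem for $G$-BSDEs (Theorem~\ref{my17} gives well-posedness, and the one-dimensional comparison is the classical result from \cite{HJPS2}). Fix $\ell$ and set $\delta^{\ell}=Y^{\ell}-\bar{Y}^{\ell}$. The key idea is a standard linearization: writing the dynamics of $\delta^{\ell}$, one introduces bounded adapted processes $a^{\ell},b^{\ell}$ coming from the Lipschitz assumption (S2), so that
\[
\delta_t^{\ell}=\bigl(\xi^{\ell}-\bar{\xi}^{\ell}\bigr)+\int_t^T\bigl(a_s^{\ell}\delta_s^{\ell}+F_s^{\ell}\bigr)\,ds+\int_t^T\bigl(b_s^{\ell}\delta_s^{\ell}+G_s^{\ell}\bigr)\,d\langle B\rangle_s+\bigl(V_T^{\ell}-V_t^{\ell}\bigr)-\bigl(\bar{V}_T^{\ell}-\bar{V}_t^{\ell}\bigr)-\int_t^T\bigl(Z_s^{\ell}-\bar{Z}_s^{\ell}\bigr)dB_s-\bigl(K_T^{\ell}-K_t^{\ell}-\bar{K}_T^{\ell}+\bar{K}_t^{\ell}\bigr),
\]
where $F_s^{\ell}=f^{\ell}(s,Y_s,\bar{Z}_s^{\ell})-\bar{f}^{\ell}(s,\bar{Y}_s,\bar{Z}_s^{\ell})$ and similarly for $G_s^{\ell}$. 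The crucial point is that, by the diagonal structure, the linearization in the $z$-variable only involves $Z^{\ell}-\bar{Z}^{\ell}$, so no cross terms in other components' $z$-differences appear.

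Next I would run an induction (or rather a simultaneous argument) over the components. The difficulty is that $F^{\ell}$ and $G^{\ell}$ are not a priori sign-definite: hypothesis (i) only gives $f^{\ell}(s,y,z^{\ell})\geq\bar{f}^{\ell}(s,\bar{y},z^{\ell})$ when $y^{j}\geq\bar{y}^{j}$ for $j\neq\ell$ and $y^{\ell}=\bar{y}^{\ell}$, which is exactly a statement about off-diagonal monotonicity. To exploit it, I would consider the vector $\delta=Y-\bar{Y}$ and argue that $\delta\geq0$ by a fixed-point/monotonicity scheme: suppose we already know $\delta^{j}\geq0$ for all $j$ on $[t,T]$ on some subinterval; then on that subinterval $F_s^{\ell}=f^{\ell}(s,Y_s,\bar{Z}_s^{\ell})-\bar{f}^{\ell}(s,\bar{Y}_s,\bar{Z}_s^{\ell})\geq f^{\ell}(s,(\bar{Y}_s^{-\ell},Y_s^{\ell}),\bar{Z}_s^{\ell})-\bar{f}^{\ell}(s,\bar{Y}_s,\bar{Z}_s^{\ell})+\bigl(\text{Lipschitz error}\bigr)$, and the first difference is $\geq0$ by (i) while the Lipschitz error is absorbed into $a_s^{\ell}\delta_s^{\ell}$. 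Cleaner: I would instead set up the comparison as a contraction on $S_G^{\alpha}(0,T;\mathbb{R}^k)$ exactly mirroring the Picard scheme of \cite{Liu-stochastics}, where at each Picard stage one freezes the off-diagonal $y$-arguments of $f^{\ell}$, $g^{\ell}$ at the previous iterate (which preserves the monotone ordering by (i)), applies the one-dimensional comparison theorem componentwise, and passes to the limit to conclude $Y_t\geq\bar{Y}_t$. The monotonicity of the Picard map (guaranteed by (i) together with (ii)) is what makes each iterate ordered, and the limit inherits the order.

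Concretely, the steps in order: (1) State the one-dimensional comparison result for reflected/non-reflected $G$-BSDEs with an added finite-variation term $V$ (this is in \cite{HJPS2} or follows by a Girsanov-type linearization under $G$-expectation, using that $-\int Z\,dB$ minus a non-increasing $G$-martingale has the right sign under every $P\in\mathcal{P}$). (2) Build the Picard iteration $Y^{0}\equiv$ (something dominating, e.g. the solution with frozen arguments), and $Y^{n+1,\ell}$ solving the one-dimensional $G$-BSDE with generator $f^{\ell}(s,Y^{n}_s,\cdot)$, $g^{\ell}(s,Y^{n}_s,\cdot)$; do the same for $\bar{Y}^{n}$. (3) Show by induction on $n$ that $Y^{n}_t\geq\bar{Y}^{n}_t$ for all $t$: the base case is a choice of initialization, and the inductive step uses (i) to get $f^{\ell}(s,Y^{n}_s,z^{\ell})\geq\bar{f}^{\ell}(s,\bar{Y}^{n}_s,z^{\ell})$ — note one needs $y^{\ell}=\bar{y}^{\ell}$ in (i), which is handled by the usual trick of comparing $Y^{n+1,\ell}$ with the solution driven by the intermediate generator $\bar{f}^{\ell}(s,(Y^{n,-\ell}_s,\bar{Y}^{n,\ell}_s),\cdot)$ and then again — plus (ii) for $\xi$ and $V$. (4) Invoke the convergence of the Picard scheme from \cite{Liu-stochastics} (valid on a small interval, then patched by backward iteration over a partition of $[0,T]$) to get $Y^{n}\to Y$, $\bar{Y}^{n}\to\bar{Y}$ in $S_G^{\alpha}$, hence q.s.\ along a subsequence, so $Y_t\geq\bar{Y}_t$ passes to the limit. (5) Concatenate the local comparisons over the partition to obtain the result on all of $[0,T]$.

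The main obstacle I expect is handling the $z$-dependence correctly when invoking the one-dimensional comparison at each Picard step: the one-dimensional comparison theorem requires comparing two BSDEs with the \emph{same} generator evaluated along one solution (or a one-sided inequality between generators along one of the solutions), and here after freezing $y$ the generators $f^{\ell}(s,Y^n_s,z^{\ell})$ and $\bar{f}^{\ell}(s,\bar{Y}^n_s,z^{\ell})$ differ in \emph{both} the frozen $y$ and possibly the functional form ($f$ vs.\ $\bar f$). The standard resolution is to apply the one-dimensional comparison with the generator of the first equation and the \emph{data} (terminal value, $V$, and pointwise generator values) of the second, which only needs $f^{\ell}(s,Y^n_s,\bar{Z}^{n,\ell}_s)\geq\bar{f}^{\ell}(s,\bar{Y}^n_s,\bar{Z}^{n,\ell}_s)$ — and this follows from (i) (after the intermediate-generator trick to align the $\ell$-th coordinate) plus the induction hypothesis $Y^n\geq\bar Y^n$. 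Getting this chain of inequalities airtight, together with checking that the non-increasing $G$-martingales $K^{\ell},\bar K^{\ell}$ do not spoil the sign in the linearized equation (they contribute with the correct sign because $K$ is non-increasing, so $-(K_T-K_t)\geq0$ direction is consistent), is the technical heart of the argument; everything else is a direct transcription of the one-dimensional theory component by component.
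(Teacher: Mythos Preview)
This theorem appears in Section~2.2 as a preliminary result quoted from \cite{Liu-stochastics}; the present paper gives no proof of its own, so there is nothing in-paper to compare against directly. Your Picard-iteration route --- freeze the off-diagonal $y$-components, apply the one-dimensional $G$-BSDE comparison theorem componentwise at each step, then pass to the limit using the contraction on small intervals patched backward over $[0,T]$ --- is correct and is the natural argument (and presumably close to what \cite{Liu-stochastics} does).

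One simplification: the ``intermediate-generator trick to align the $\ell$-th coordinate'' you flag as a difficulty is unnecessary. In the Picard scheme (cf.\ Section~4.2 here, or \cite{Liu-stochastics}) the $\ell$-th coordinate $y^{\ell}$ is kept as a live variable in the frozen generator $f^{\ell,U}(s,y^{\ell},z^{\ell})=f^{\ell}(s,U^{1}_{s},\ldots,U^{\ell-1}_{s},y^{\ell},U^{\ell+1}_{s},\ldots,U^{k}_{s},z^{\ell})$, and hypothesis (i) is stated precisely with $y^{\ell}=\bar y^{\ell}$. Hence, under the induction hypothesis $Y^{n,j}\geq\bar Y^{n,j}$ for $j\neq\ell$, one obtains directly
\[
f^{\ell}(s,Y^{n,1}_{s},\ldots,Y^{n,\ell-1}_{s},y^{\ell},Y^{n,\ell+1}_{s},\ldots,Y^{n,k}_{s},z^{\ell})\geq \bar f^{\ell}(s,\bar Y^{n,1}_{s},\ldots,\bar Y^{n,\ell-1}_{s},y^{\ell},\bar Y^{n,\ell+1}_{s},\ldots,\bar Y^{n,k}_{s},z^{\ell})
\]
for every $(y^{\ell},z^{\ell})$, which is exactly the pointwise generator inequality required by the one-dimensional comparison; no auxiliary intermediate equation is needed. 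Likewise, your concern that the non-increasing $G$-martingales $K^{\ell},\bar K^{\ell}$ might ``spoil the sign'' is misplaced: they are part of the solution, not data, and the one-dimensional comparison result (in the form with an added RCLL process $V$, as in \cite{Liu-stochastics} building on \cite{HJPS2}) already handles them internally. The only monotonicity to verify on the data side is $\xi\geq\bar\xi$ and $V^{\ell}-\bar V^{\ell}$ non-decreasing, both supplied by (ii).
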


\subsection{One-dimensional reflected $G$-BSDEs}
 Consider the
following parameters: the generators $f$ and $g$, the obstacle process
$\{S_{t}\}_{t\in\lbrack0,T]}$ and the terminal value $\xi$. We assume that the
generators
%$f$ and $g$ are maps%
\[
f(t,\omega,y,z),g(t,\omega,y,z):[0,T]\times\Omega_{T}\times\mathbb{R}%
^{2}\rightarrow\mathbb{R},
\]
satisfy:

\begin{itemize}
\item[(H1)] for any $y,z$, $f(\cdot,\cdot,y,z)$, $g(\cdot,\cdot,y,z)\in
M_{G}^{\beta}(0,T)$ with $\beta>2$

\item[(H2)] $|f(t,\omega,y,z)-f(t,\omega,y^{\prime},z^{\prime})|+|g(t,\omega
,y,z)-g(t,\omega,y^{\prime},z^{\prime})|\leq L(|y-y^{\prime}|+|z-z^{\prime}|)$
for some $L>0$.
\end{itemize}

The lower obstacle $S$ is bounded from above by some given generalized $G$-It\^{o}
process. More precisely, it satisfies the following condition:

\begin{itemize}
\item[(H3)] $S_{t}\leq I_{t}$ for any $t\in[0,T]$, where $I$ is a generalized
$G$-It\^{o} process:
\[
I_{t}=I_{0}+\int_{0}^{t} b^{I}(s)ds+\int_{0}^{t} \sigma^{I}(s)dB_{s}+K^{I}%
_{t},
\]
with $I_0\in\mathbb{R}$, $b^{I}\in M_{G}^{\beta}(0,T)$, $\sigma^{I}\in H_{G}^{\beta}(0,T)$,
$K^I\in \mathcal{A}_G^\beta(0,T)$.
\end{itemize}

The terminal value $\xi$ satisfies

\begin{itemize}
%;
\item[(H4)] $\xi\in L_{G}^{\beta}(\Omega_{T})$ and $\xi\geq S_{T}$, $q.s.$
%There exists a constant $c$ such that $\{S_t\}_{t\in[0,T]}\in S_G^\beta(0,T)$ and $S_t\leq c$, for each $t\in[0,T]$;
\end{itemize}

A triple of processes $(Y,Z,A)$ is called a solution of reflected $G$-BSDE
with a lower obstacle $S$, terminal value $\xi$ and generators $f,g$, if for
some $2\leq\alpha\leq\beta$ the following properties hold:

\begin{itemize}
\item[(a)] $(Y,Z,A)\in\mathcal{S}_{G}^{\alpha}(0,T)$ and $Y_{t}\geq S_{t}$,
$0\leq t\leq T$;

\item[(b)] $Y_{t}=\xi+\int_{t}^{T} f(s,Y_{s},Z_{s})ds+\int_{t}^{T}
g(s,Y_{s},Z_{s})d\langle B\rangle_{s} -\int_{t}^{T} Z_{s} dB_{s}+(A_{T}%
-A_{t})$;

\item[(c)] $\{-\int_{0}^{t} (Y_{s}-S_{s})dA_{s}\}_{t\in[0,T]}$ is a
non-increasing $G$-martingale.
\end{itemize}
Here, we denote by $\mathcal{S}_{G}^{\alpha}(0,T)$ the collection of processes
$(Y,Z,A)$ such that $Y\in S_{G}^{\alpha}(0,T)$, $Z\in H_{G}^{\alpha}(0,T)$,
$A$ is a continuous non-decreasing process with $A_{0}=0$ and $A\in
S_{G}^{\alpha}(0,T)$.
%For simplicity, we mainly consider the case where $g\equiv 0$ and $l\equiv 0$. Similar results still hold for the cases $g,l\neq 0$. Now,  we give a priori estimates for the solution of the reflected $G$-BSDE with a lower obstacle.% In the following, $C$ will always designate a constant, which may vary from line to line.

\begin{theorem}[\cite{LPSH,LS}]
\label{the1.14} Suppose that $(\xi,f,g,S)$ satisfy \textsc{(H1)}%
-\textsc{(H4)}. Then, the reflected $G$-BSDE with parameters $(\xi,f,g,S)$
has a unique solution $(Y,Z,A)\in\mathcal{S}_{G}^{\alpha}(0,T)$, for any
$2\leq\alpha<\beta$. Besides, there exists a constant $C:=C(\alpha,T,
L,\underline{\sigma})>0$ such that {for $2\leq\alpha\leq\beta$,} 
\[
|Y_{t}|^{\alpha}\leq C\hat{\mathbb{E}}_{t}[|\xi|^{\alpha}+\int_{t}%
^{T}(|f(s,0,0)|^{\alpha}+|g(s,0,0)|^{\alpha}+|b^{I}(s)|^{\alpha}+|\sigma
^{I}(s)|^{\alpha})ds+\sup_{s\in[t,T]}|I_{s}|^{\alpha}].
\]
\end{theorem}

Then, we present some a priori estimates. It is worth pointing out that in the following two propositions, we do not need to assume that $(Y,Z,A)$ and $(Y^{(i)},Z^{(i)},A^{(i)})$, $i=1,2$, are the solutions of reflected $G$-BSDEs, i.e., the condition (c) is not needed.
\begin{proposition}[\cite{LPSH}]
\label{the1.6} Let $f,g$ satisfy \textsc{(H1)} and \textsc{(H2)}.
Assume
\[
Y_{t}=\xi+\int_{t}^{T} f(s,Y_{s},Z_{s})ds+\int_{t}^{T} g(s,Y_{s}%
,Z_{s})d\langle B\rangle_{s}-\int_{t}^{T} Z_{s}dB_{s}+(A_{T}-A_{t}),
\]
where $(Y,Z,A)\in\mathcal{S}_{G}^{\alpha}(0,T)$ with $2\leq \alpha\leq \beta$. Then,
there exists a constant $C:=C(\alpha, T, L,\underline{\sigma})>0$ such that
for each $t\in[0,T]$,
\begin{align*}
\hat{\mathbb{E}}_{t}[(\int_{t}^{T} |Z_{s}|^{2}ds)^{\frac{\alpha}{2}}]  &  \leq
C\{\hat{\mathbb{E}}_{t}[\sup_{s\in[t,T]}|Y_{s}|^{\alpha}] +(\hat{\mathbb{E}%
}_{t}[\sup_{s\in[t,T]}|Y_{s}|^{\alpha}])^{1/2}(\hat{\mathbb{E}}_{t}[(\int%
_{t}^{T} h_{s} ds)^{\alpha}])^{1/2}\},\\
\hat{\mathbb{E}}_{t}[|A_{T}-A_{t}|^{\alpha}]  &  \leq C\{\hat{\mathbb{E}}%
_{t}[\sup_{s\in[t,T]}|Y_{s}|^{\alpha}]+\hat{\mathbb{E}}_{t}[(\int_{t}^{T}
h_{s} ds)^{\alpha}]\},
\end{align*}
where $h_{s}=|f(s,0,0)|+|g(s,0,0)|$.
\end{proposition}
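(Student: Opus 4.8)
The plan is to derive the two a priori estimates in the style of the standard $G$-BSDE estimates (cf. \cite{HJPS1}, \cite{LPSH}), starting from the equation for $Y$ and applying $G$-It\^{o}'s formula to $|Y_s|^2$ (or a suitable power), then conditioning and iterating. First I would apply It\^{o}'s formula to $e^{\beta s}|Y_s|^2$ on $[t,T]$, which produces the term $\underline{\sigma}^2\int_t^T |Z_s|^2 ds$ on one side from the quadratic variation (using $d\langle B\rangle_s \geq \underline{\sigma}^2 ds$), the generator terms $\int_t^T Y_s f(s,Y_s,Z_s)\,ds + \int_t^T Y_s g(s,Y_s,Z_s)\,d\langle B\rangle_s$, the martingale term $\int_t^T Y_s Z_s\,dB_s$, and the reflection term $\int_t^T Y_s\,dA_s$. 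Taking conditional $G$-expectation $\hat{\mathbb{E}}_t[\cdot]$ kills the $dB_s$ integral; the key is that the sign of the $Y_s\,dA_s$ term must be controlled, and for that I would \emph{not} use the Skorohod condition (which is not assumed in this proposition — only properties (a)--(b)) but rather bound $\int_t^T Y_s\,dA_s \leq \sup_{s\in[t,T]}|Y_s|\cdot(A_T-A_t)$ and defer $A_T-A_t$ to the second estimate.

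Second, I would handle the Lipschitz bounds: writing $f(s,Y_s,Z_s) = f(s,0,0) + (f(s,Y_s,Z_s)-f(s,0,0))$, use (H2) to get $|f(s,Y_s,Z_s)| \leq h_s + L(|Y_s|+|Z_s|)$ and similarly for $g$, so that $|Y_s f(s,Y_s,Z_s)| \leq |Y_s| h_s + L|Y_s|^2 + L|Y_s||Z_s|$. The cross term $L|Y_s||Z_s|$ is absorbed into $\tfrac{\underline{\sigma}^2}{2}|Z_s|^2 + C|Y_s|^2$ by Young's inequality, leaving enough of the $|Z_s|^2$ term on the left. After taking $\hat{\mathbb{E}}_t$, rearranging, and using sub-additivity of the conditional $G$-expectation, this yields
\[
\hat{\mathbb{E}}_t\Big[\Big(\int_t^T |Z_s|^2 ds\Big)^{\alpha/2}\Big] \leq C\Big\{\hat{\mathbb{E}}_t\big[\sup_{s\in[t,T]}|Y_s|^\alpha\big] + \big(\hat{\mathbb{E}}_t\big[\sup_{s\in[t,T]}|Y_s|^\alpha\big]\big)^{1/2}\big(\hat{\mathbb{E}}_t\big[(\int_t^T h_s\,ds)^\alpha\big]\big)^{1/2} + \big(\hat{\mathbb{E}}_t\big[\sup_{s\in[t,T]}|Y_s|^\alpha\big]\big)^{1/2}\big(\hat{\mathbb{E}}_t[|A_T-A_t|^\alpha]\big)^{1/2}\Big\},
\]
first for $\alpha=2$ and then for general $\alpha\geq 2$ by raising to the power $\alpha/2$ and applying the $G$-expectation version of the Burkholder--Davis--Gundy inequality to the $dB_s$-martingale together with Young's inequality to reabsorb a small multiple of $\hat{\mathbb{E}}_t[(\int_t^T|Z_s|^2ds)^{\alpha/2}]$ from the right.

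Third, for the estimate on $A_T - A_t$: solve the equation (b) for the increment of $A$, namely
\[
A_T - A_t = Y_t - \xi - \int_t^T f(s,Y_s,Z_s)\,ds - \int_t^T g(s,Y_s,Z_s)\,d\langle B\rangle_s + \int_t^T Z_s\,dB_s,
\]
and note that since $A$ is non-decreasing, $|A_T-A_t| = A_T-A_t$, so we may take $\hat{\mathbb{E}}_t$ directly. Using $|Y_t|\leq \sup_{s\in[t,T]}|Y_s|$, $|\xi|=|Y_T|\leq \sup_{s\in[t,T]}|Y_s|$, the Lipschitz bound on $f,g$ as above, and the BDG-type estimate $\hat{\mathbb{E}}_t[|\int_t^T Z_s\,dB_s|^\alpha] \leq C\,\hat{\mathbb{E}}_t[(\int_t^T|Z_s|^2ds)^{\alpha/2}]$, we obtain a bound on $\hat{\mathbb{E}}_t[|A_T-A_t|^\alpha]$ in terms of $\hat{\mathbb{E}}_t[\sup_{s\in[t,T]}|Y_s|^\alpha]$, $\hat{\mathbb{E}}_t[(\int_t^T h_s\,ds)^\alpha]$, and $\hat{\mathbb{E}}_t[(\int_t^T|Z_s|^2ds)^{\alpha/2}]$. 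Substituting the first estimate into this and using Young's inequality to reabsorb the resulting $(\hat{\mathbb{E}}_t[|A_T-A_t|^\alpha])^{1/2}$ factor gives the claimed bound on $\hat{\mathbb{E}}_t[|A_T-A_t|^\alpha]$; feeding that back into the preliminary $Z$-estimate then cleans up the last term and delivers the stated $Z$-estimate. The main obstacle I expect is the careful bookkeeping of constants so that the $|Z|^2$ term and the $|A_T-A_t|$ term are each reabsorbed only once and with a strictly-less-than-one coefficient; the dependence of $C$ on $\underline{\sigma}$ enters precisely through the coefficient of the $|Z|^2$ term in the It\^{o} expansion, and one must check that the $G$-martingale $K^I$ hidden in any comparison argument does not spoil signs — but here, since we work directly from (b) without invoking an obstacle, this does not arise and the argument is a routine (if delicate) adaptation of the one-dimensional $G$-BSDE estimates.
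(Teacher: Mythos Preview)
The paper does not give its own proof of this proposition: it is stated as a quotation from \cite{LPSH} and is used as a black box in the subsequent arguments. There is therefore nothing in the present paper to compare your proposal against.

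That said, your outline is the standard route used in \cite{LPSH} and \cite{HJPS1}: apply $G$-It\^o's formula to $|Y|^2$, bound the cross term $\int Y\,dA$ by $\sup|Y|\cdot(A_T-A_t)$ (correctly noting that no Skorohod condition is assumed here), use the Lipschitz hypothesis to split off $h_s$ and absorb the $|Y||Z|$ term via Young's inequality, pass to the $\alpha/2$ power with BDG, and close the loop by expressing $A_T-A_t$ from the equation and reabsorbing. This is exactly the argument carried out in \cite{LPSH}, Proposition~3.5, so your proposal is both correct and aligned with the original source.
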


\begin{proposition}[\cite{LPSH}]
\label{the1.7} For $i=1,2$, let $\xi^{(i)}\in L_{G}^{\beta
}(\Omega_{T})$, $f^{(i)},g^{(i)}$ satisfy \textsc{(H1)} and \textsc{(H2)} for
some $\beta>2$. Assume
\[
Y_{t}^{(i)}=\xi^{(i)}+\int_{t}^{T}f^{(i)}(s,Y_{s}^{(i)},Z_{s}^{(i)}%
)ds+\int_{t}^{T}g^{(i)}(s,Y_{s}^{(i)},Z_{s}^{(i)})d\langle B\rangle_{s}%
-\int_{t}^{T}Z_{s}^{(i)}dB_{s}+(A_{T}^{(i)}-A_{t}^{(i)}),
\]
where $(Y^{(i)},Z^{(i)},A^{(i)})\in\mathcal{S}_{G}^{\alpha}(0,T)$ for some
$2\leq\alpha\leq\beta$. Set $\hat{Y}_{t}=Y_{t}^{(1)}-Y_{t}^{(2)}$, $\hat
{Z}_{t}=Z_{t}^{(1)}-Z_{t}^{(2)}$. Then, there exists a constant $C:=C(\alpha
,T,L,\underline{\sigma})$ such that
\begin{align*}
\hat{\mathbb{E}}[(\int_{0}^{T}|\hat{Z}|^{2}ds)^{\frac{\alpha}{2}}]\leq &
C_{\alpha}\{(\hat{\mathbb{E}}[\sup_{t\in\lbrack0,T]}|\hat{Y}_{t}|^{\alpha
}])^{1/2}\sum_{i=1}^{2}[(\hat{\mathbb{E}}[\sup_{t\in\lbrack0,T]}|{Y}_{t}%
^{(i)}|^{\alpha}])^{1/2}\\
&  +(\hat{\mathbb{E}}[(\int_{0}^{T}h_{s}^{(i)}ds)^{\alpha}])^{1/2}%
]+\hat{\mathbb{E}}[\sup_{t\in\lbrack0,T]}|\hat{Y}_{t}|^{\alpha}]\},
\end{align*}
where $h_{s}^{(i)}=|f^{(i)}(s,0,0)|+|g^{(i)}(s,0,0)|$.
\end{proposition}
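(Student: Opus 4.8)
\textbf{Proof proposal for Proposition \ref{the1.7}.}

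The plan is to adapt the a priori estimate of Proposition \ref{the1.6} to the difference of two solutions, since $\hat Y$ itself satisfies a reflected-type equation whose ``obstacle'' is no longer present but whose generator is the difference of the two generators evaluated along the respective solutions. Concretely, subtracting the two equations gives
\[
\hat Y_{t}=\hat\xi+\int_{t}^{T}\delta f_{s}\,ds+\int_{t}^{T}\delta g_{s}\,d\langle B\rangle_{s}-\int_{t}^{T}\hat Z_{s}\,dB_{s}+(A_{T}^{(1)}-A_{t}^{(1)})-(A_{T}^{(2)}-A_{t}^{(2)}),
\]
where $\delta f_{s}=f^{(1)}(s,Y_{s}^{(1)},Z_{s}^{(1)})-f^{(2)}(s,Y_{s}^{(2)},Z_{s}^{(2)})$ and similarly for $\delta g$. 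The key point is that $\hat A:=A^{(1)}-A^{(2)}$ is a difference of non-decreasing processes, hence of bounded variation but not monotone, so one cannot invoke Proposition \ref{the1.6} verbatim; instead I would redo the It\^o-type computation on $|\hat Y_{t}|^{2}$ (or, more precisely, work with $|\hat Y|^{\alpha}$ via $(\varepsilon+|\hat Y|^{2})^{\alpha/2}$ to handle general $\alpha\ge 2$).

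The main steps are as follows. First, apply the $G$-It\^o formula to $|\hat Y_{t}|^{2}$ (after the usual mollification $(\varepsilon+|\hat Y_{t}|^{2})^{\alpha/2}$ when $\alpha>2$), producing the $\underline\sigma^{2}\int_{t}^{T}|\hat Z_{s}|^{2}\,ds$ term on one side from the quadratic-variation part, a stochastic-integral term whose conditional $G$-expectation is controlled, the Lipschitz bound $|\delta f_{s}|+|\delta g_{s}|\le L(|\hat Y_{s}|+|\hat Z_{s}|)+\,\text{(increments of }h^{(i)}\text{)}$ from (H2), and crucially the cross term $\int_{t}^{T}\hat Y_{s}\,d\hat A_{s}$. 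Second, bound this cross term: since $A^{(i)}$ is non-decreasing one has $\int_{t}^{T}\hat Y_{s}\,d\hat A_{s}=\int_{t}^{T}\hat Y_{s}\,dA_{s}^{(1)}-\int_{t}^{T}\hat Y_{s}\,dA_{s}^{(2)}\le \sup_{s}|\hat Y_{s}|\,(|A_{T}^{(1)}-A_{t}^{(1)}|+|A_{T}^{(2)}-A_{t}^{(2)}|)$, and then the total-variation bounds on $A^{(i)}$ coming from Proposition \ref{the1.6} feed in the terms $\hat{\mathbb{E}}[\sup|Y^{(i)}|^{\alpha}]$ and $\hat{\mathbb{E}}[(\int h_{s}^{(i)}\,ds)^{\alpha}]$. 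Third, absorb the $\int|\hat Z|^{2}$ contributions from the Lipschitz term using Young's inequality so that a positive multiple of $\hat{\mathbb{E}}[(\int_{0}^{T}|\hat Z_{s}|^{2}\,ds)^{\alpha/2}]$ survives on the left, and collect the remaining terms. Finally, take $G$-expectations, use the Burkholder–Davis–Gundy-type inequality for $G$-stochastic integrals to handle $\hat{\mathbb{E}}[\sup_{t}|\int_{0}^{t}\hat Z_{s}\,dB_{s}|^{\alpha}]$ against $\hat{\mathbb{E}}[(\int|\hat Z|^{2})^{\alpha/2}]$ (with a small enough constant, again absorbed into the left side), and rearrange to isolate $\hat{\mathbb{E}}[(\int_{0}^{T}|\hat Z|^{2}\,ds)^{\alpha/2}]$.

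I expect the main obstacle to be the handling of the cross term $\int_{t}^{T}\hat Y_{s}\,d\hat A_{s}$ together with the bookkeeping of constants during the absorption steps: one must be careful that the multiplicative constants from BDG and from Young's inequality are chosen small enough (depending only on $\alpha,T,L,\underline\sigma$) to leave a strictly positive coefficient in front of $\hat{\mathbb{E}}[(\int_{0}^{T}|\hat Z|^{2}\,ds)^{\alpha/2}]$, and that the estimates for $|A_{T}^{(i)}-A_{t}^{(i)}|^{\alpha}$ from Proposition \ref{the1.6} are inserted at the right place so the final bound depends only on $\hat{\mathbb{E}}[\sup_{t}|\hat Y_{t}|^{\alpha}]$, $\hat{\mathbb{E}}[\sup_{t}|Y^{(i)}_{t}|^{\alpha}]$ and $\hat{\mathbb{E}}[(\int_{0}^{T}h_{s}^{(i)}\,ds)^{\alpha}]$. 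Since the reflecting processes $A^{(i)}$ never need to be compared to each other in a monotone sense here, the $G$-martingale Skorohod condition (c) plays no role in this particular estimate; it is purely an energy estimate for the $Z$-component, and the argument is essentially the reflected-BSDE analogue of the classical difference estimate, transported to the $G$-framework using Theorem \ref{the1.2} where maximal inequalities are needed.
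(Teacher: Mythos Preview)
The paper itself does not prove this proposition; it is quoted from \cite{LPSH} as a preliminary result in Section~2.3, so there is no in-paper argument to compare against. Your outline is the standard energy-estimate route and matches what one finds in \cite{LPSH}: apply the $G$-It\^o formula to $|\hat Y|^{2}$, raise to the power $\alpha/2$, control the bounded-variation cross term $\int_{0}^{T}\hat Y_{s}\,d\hat A_{s}$ by $\sup_{s}|\hat Y_{s}|\,(A_{T}^{(1)}+A_{T}^{(2)})$ followed by Cauchy--Schwarz and Proposition~\ref{the1.6}, absorb the $\hat Z$-contributions from the drift and the stochastic integral via Young and BDG, and collect. You are also right that the Skorohod-type condition plays no role here.

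One small correction: the Lipschitz bound you wrote, $|\delta f_{s}|+|\delta g_{s}|\le L(|\hat Y_{s}|+|\hat Z_{s}|)+\text{(increments of }h^{(i)}\text{)}$, is not valid when $f^{(1)}\neq f^{(2)}$, since the residual $f^{(1)}(s,Y_{s}^{(2)},Z_{s}^{(2)})-f^{(2)}(s,Y_{s}^{(2)},Z_{s}^{(2)})$ is not controlled by $h_{s}^{(1)}+h_{s}^{(2)}$. The way out is to bound the drift contribution more crudely by $\sup_{s}|\hat Y_{s}|\cdot\sum_{i}\int_{0}^{T}|f^{(i)}(s,Y_{s}^{(i)},Z_{s}^{(i)})|\,ds$, use $|f^{(i)}(s,Y_{s}^{(i)},Z_{s}^{(i)})|\le L(|Y_{s}^{(i)}|+|Z_{s}^{(i)}|)+h_{s}^{(i)}$, and then feed in the $Z^{(i)}$-estimate of Proposition~\ref{the1.6}; after Cauchy--Schwarz this yields exactly the product structure $(\hat{\mathbb{E}}[\sup|\hat Y|^{\alpha}])^{1/2}\cdot(\cdots)^{1/2}$ in the statement. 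This is a bookkeeping adjustment rather than a structural gap, and your overall plan is sound.
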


%\begin{remark}
%Note that in the above two propositions, we do not assume $(Y,Z,A)$ and $(Y^{(i)},Z^{(i)},A^{(i)})$, $i=1,2$ to be the solutions of reflected $G$-BSDEs.
%\end{remark}

%\begin{proposition}
%\label{the1.9} Let $(\xi,f,g,S)$ satisfy \textsc{(H1)}--\textsc{(H4)}.
%Assume that $(Y,Z,A)\in\mathcal{S}_{G}^{\alpha}(0,T)$, for some $2\leq
%\alpha<\beta$, is a solution of the reflected $G$-BSDE with data
%$(\xi,f,g,S)$. Then there exists a constant $C:=C(\alpha,T,
%L,\underline{\sigma})>0$ such that
%\[
%|Y_{t}|^{\alpha}\leq C\hat{\mathbb{E}}_{t}[|\xi|^{\alpha}+\int_{t}%
%^{T}(|f(s,0,0)|^{\alpha}+|g(s,0,0)|^{\alpha}+|b^{I}(s)|^{\alpha}+|\sigma
%^{I}(s)|^{\alpha})ds+\sup_{s\in[t,T]}|I_{s}|^{\alpha}].
%\]
%\end{proposition}

\begin{proposition}[\cite{LPSH,LS}]
\label{the1.10} Let $(\xi^{(1)},f^{(1)},g^{(1)},S^{(1)})$ and $(\xi
^{(2)},f^{(2)},g^{(2)},S^{(2)})$ be two sets of data satisfying  \textsc{(H1)}--\textsc{(H4)}. Let $(Y^{(i)},Z^{(i)},A^{(i)}%
)\in\mathcal{S}_{G}^{\alpha}(0,T)$ be the solutions of the reflected $G$-BSDEs
with data $(\xi^{(i)},f^{(i)},g^{(i)},S^{(i)})$, $i=1,2$ respectively, with
$2\leq\alpha\leq \beta$. Set $\hat{Y}_{t}=Y_{t}^{(1)}-Y_{t}^{(2)}$, $\hat
{S}_{t}=S_{t}^{(1)}-S_{t}^{(2)},$ $\hat{\xi}=\xi^{(1)}-\xi^{(2)}$. Then, there
exists a constant $C:=C(\alpha,T,L,\underline{\sigma})>0$ such that
\[
|\hat{Y}_{t}|^{\alpha}\leq C\{\hat{\mathbb{E}}_{t}[|\hat{\xi}|^{\alpha}%
+\int_{t}^{T}|\hat{h}|^\alpha_{s}ds]+(\hat{\mathbb{E}}_{t}[\sup_{s\in\lbrack
t,T]}|\hat{S}_{s}|^{\alpha}])^{\frac{1}{\alpha}}\Psi_{t,T}^{\frac{\alpha
-1}{\alpha}}\},
\]
where $\hat{h}_{s}=|f^{(1)}(s,Y_{s}^{(2)},Z_{s}^{(2)})-f^{(2)}(s,Y_{s}%
^{(2)},Z_{s}^{(2)})|+|g^{(1)}(s,Y_{s}^{(2)},Z_{s}^{(2)})-g^{(2)}(s,Y_{s}%
^{(2)},Z_{s}^{(2)})|$ and
\begin{align*}
\Psi_{t,T}=\sum_{i=1}^{2}&\hat{\mathbb{E}}_{t}\bigg[\sup_{s\in\lbrack t,T]}%
\hat{\mathbb{E}}_{s}\big[|\xi^{(i)}|^{\alpha}+\sup_{s\in\lbrack0,T]}|I_{s}%
^{(i)}|^{\alpha}\\
&+\int_{0}^{T}(|f^{(i)}(s,0,0)|^{\alpha}+|g^{(i)}%
(s,0,0)|^{\alpha}+|b^{(i)}(s)|^{\alpha}+|\sigma^{(i)}(s)|^{\alpha})ds\big]\bigg].
\end{align*}

\end{proposition}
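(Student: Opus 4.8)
The plan is to run the $G$-It\^o energy estimate underlying Propositions \ref{the1.6}--\ref{the1.9} on the difference $\hat Y=Y^{(1)}-Y^{(2)}$, after first linearizing the difference of the generators. The only step that is not routine, and which I expect to be the main obstacle, is the control of the \emph{signed} reflection term $A^{(1)}-A^{(2)}$, which, unlike a single $A^{(i)}$, is not monotone.

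\emph{Linearization.} Set $\hat Z=Z^{(1)}-Z^{(2)}$. Adding and subtracting $f^{(1)}(s,Y^{(2)}_s,Z^{(2)}_s)$ and $g^{(1)}(s,Y^{(2)}_s,Z^{(2)}_s)$, assumption \textsc{(H2)} furnishes bounded processes $\lambda,\eta,\mu,\nu$ (all $\le L$ in absolute value) such that, with $\hat h^f_s=f^{(1)}(s,Y^{(2)}_s,Z^{(2)}_s)-f^{(2)}(s,Y^{(2)}_s,Z^{(2)}_s)$ and $\hat h^g_s$ defined analogously (so $|\hat h^f_s|+|\hat h^g_s|=\hat h_s$),
\[
\hat Y_t=\hat\xi+\int_t^T(\lambda_s\hat Y_s+\eta_s\hat Z_s+\hat h^f_s)\,ds+\int_t^T(\mu_s\hat Y_s+\nu_s\hat Z_s+\hat h^g_s)\,d\langle B\rangle_s-\int_t^T\hat Z_s\,dB_s+(A^{(1)}_T-A^{(1)}_t)-(A^{(2)}_T-A^{(2)}_t).
\]
Following the proof of Proposition \ref{the1.9}, I would apply the $G$-It\^o formula to $(\varepsilon+|\hat Y_t|^2)^{\alpha/2}$, take $\hat{\mathbb{E}}_t[\,\cdot\,]$, and let $\varepsilon\downarrow0$. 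The symmetric $G$-martingale $\int\hat Z\,dB$ and the non-increasing $G$-martingale produced by the $d\langle B\rangle$-correction have non-positive conditional $G$-expectation and drop by sublinearity; the resulting negative term $-c_\alpha\int_t^T|\hat Y_s|^{\alpha-2}|\hat Z_s|^2\,d\langle B\rangle_s$ absorbs, via Young's inequality and $\underline{\sigma}^2\,ds\le d\langle B\rangle_s$, the $\hat Z$-contributions of $\eta$ and $\nu$; the $\lambda,\mu$-terms are handled by Gronwall; $|\hat Y_T|^\alpha=|\hat\xi|^\alpha$; and the sources contribute $C\hat{\mathbb{E}}_t[\int_t^T|\hat h_s|^\alpha\,ds]$. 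This reduces matters to estimating the reflection term $\mathcal{R}_t:=\alpha\,\hat{\mathbb{E}}_t[\int_t^T|\hat Y_s|^{\alpha-2}\hat Y_s\,dA^{(1)}_s-\int_t^T|\hat Y_s|^{\alpha-2}\hat Y_s\,dA^{(2)}_s]$.

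\emph{The reflection term — the main obstacle.} Classically one would use the Skorohod identities $\int(Y^{(i)}-S^{(i)})\,dA^{(i)}=0$ together with $Y^{(i)}\ge S^{(i)}$ so that $dA^{(1)}$ is effectively carried by $\{Y^{(1)}=S^{(1)}\}$ and $dA^{(2)}$ by $\{Y^{(2)}=S^{(2)}\}$, whence $|\hat Y_s|^{\alpha-2}\hat Y_s\,dA^{(1)}_s\le|\hat Y_s|^{\alpha-2}|\hat S_s|\,dA^{(1)}_s$ and symmetrically, giving $\mathcal{R}_t\le C\,\hat{\mathbb{E}}_t[\int_t^T|\hat Y_s|^{\alpha-2}|\hat S_s|\,(dA^{(1)}_s+dA^{(2)}_s)]$. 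In the $G$-framework the identity $\int(Y^{(i)}-S^{(i)})\,dA^{(i)}=0$ is \emph{not} available: condition (c) only asserts that $-\int_0^{\cdot}(Y^{(i)}_s-S^{(i)}_s)\,dA^{(i)}_s$ is a non-increasing $G$-martingale, which must therefore be exploited at the level of conditional $G$-expectations and sublinearity — and, crucially, in a way that preserves the cancellation between the two integrals (when the two data sets coincide, uniqueness forces $\hat Y\equiv0$ and $\mathcal{R}_t=0$, so no separate crude bound on either integral is admissible). Carrying this out — presumably by first discretizing, freezing the nonnegative weight $|\hat Y_s|^{\alpha-2}$, and passing to the limit, as is done for the analogous terms in \cite{LPSH} — is the technical heart of the proof, and yields, up to a term that is absorbed on the left, the same bound $\mathcal{R}_t\le C\,\hat{\mathbb{E}}_t[\sup_{r\in[t,T]}|\hat Y_r|^{\alpha-2}\cdot\sup_{r\in[t,T]}|\hat S_r|\cdot\bigl((A^{(1)}_T-A^{(1)}_t)+(A^{(2)}_T-A^{(2)}_t)\bigr)]$ as in the classical case.

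\emph{Closing.} A H\"older inequality (exponents $\alpha$, $\tfrac{\alpha}{\alpha-2}$, $\alpha$) splits this last expression into $(\hat{\mathbb{E}}_t[\sup_r|\hat S_r|^\alpha])^{1/\alpha}$, a power $\tfrac{\alpha-2}{\alpha}$ of $\hat{\mathbb{E}}_t[\sup_r|\hat Y_r|^\alpha]$, and a power $\tfrac1\alpha$ of $\hat{\mathbb{E}}_t[(A^{(1)}_T-A^{(1)}_t)^\alpha+(A^{(2)}_T-A^{(2)}_t)^\alpha]$; Proposition \ref{the1.6} bounds the $A^{(i)}$-moments and Proposition \ref{the1.9} the $Y^{(i)}$-moments, and \textsc{(H3)} (dominating $S^{(i)}$, hence $\hat S$, by $I^{(1)},I^{(2)}$) bounds the $\hat S$-moments, all by $C\Psi_{t,T}$. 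Running the estimate from every $s\in[t,T]$, taking the supremum and passing to the $\mathcal{E}$-type norm (Theorem \ref{the1.2}) exactly as in Proposition \ref{the1.9} restores $\sup_r|\hat Y_r|^\alpha$ on the left, which is absorbed, while a final Young inequality rebalances the surviving powers so that $\sup|\hat S|^\alpha$ enters with exponent $1/\alpha$ and $\Psi_{t,T}$ with exponent $\tfrac{\alpha-1}{\alpha}$. Collecting the contributions of the three steps yields the asserted inequality. Everything except the reflection term of the previous paragraph is routine bookkeeping with H\"older, Young and the a priori estimates already recorded; that term — where the $G$-expectation form of the Skorohod condition must replace the classical pathwise one while keeping the $A^{(1)}$/$A^{(2)}$ cancellation intact — is where I expect the real work to lie.
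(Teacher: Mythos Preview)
The paper does \emph{not} supply its own proof of this proposition: it appears in Section~2.3 among the preliminaries on one-dimensional reflected $G$-BSDEs, quoted (like Propositions \ref{the1.6}--\ref{the1.9} and Theorem \ref{the1.14}) from \cite{LPSH,LS}, and is simply used as input in the proofs of Propositions \ref{difference of Y} and Lemma \ref{myq9}. So there is nothing in the present paper to compare your argument against.

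That said, your outline is essentially the argument of \cite{LPSH}. One comment on the ``main obstacle'': you are slightly overcomplicating the reflection term by insisting on a joint cancellation between $A^{(1)}$ and $A^{(2)}$. In fact the two integrals can be handled \emph{separately}. Write
\[
\hat Y_s \;=\;(Y^{(1)}_s-S^{(1)}_s)\;+\;\hat S_s\;-\;(Y^{(2)}_s-S^{(2)}_s),
\]
so that, since $Y^{(2)}_s\ge S^{(2)}_s$ and $dA^{(1)}_s\ge 0$,
\[
|\hat Y_s|^{\alpha-2}\hat Y_s\,dA^{(1)}_s
\;\le\;
|\hat Y_s|^{\alpha-2}(Y^{(1)}_s-S^{(1)}_s)\,dA^{(1)}_s
\;+\;
|\hat Y_s|^{\alpha-2}|\hat S_s|\,dA^{(1)}_s,
\]
and symmetrically for $-|\hat Y_s|^{\alpha-2}\hat Y_s\,dA^{(2)}_s$. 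The second piece is exactly what feeds into your H\"older/closing step. For the first piece, the point is that $-\int_0^{\cdot}(Y^{(1)}_s-S^{(1)}_s)\,dA^{(1)}_s$ is a non-increasing $G$-martingale, and integrating a \emph{nonnegative} weight $|\hat Y_s|^{\alpha-2}$ against it still yields a process whose conditional $G$-expectation is nonpositive (this is the approximation lemma from \cite{LPSH} that you allude to). There is no need to couple the two reflection terms; each one drops on its own after taking $\hat{\mathbb E}_t$. With this simplification the rest of your outline goes through as written.
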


The following result is the comparison theorem for reflected $G$-BSDEs. 
\begin{theorem}[\cite{LPSH,LS}]
\label{the1.16} Let $(\xi^{(1)},f^{(1)},g^{(1)},S^{(1)})$ and $(\xi
^{(2)},f^{(2)},g^{(2)},S^{(2)})$ be two sets of parameters satisfying
\textsc{(H1)}--\textsc{(H4)}. We furthermore assume the following:

\begin{itemize}
\item[(i)] $\xi^{(1)}\geq\xi^{(2)}$, $q.s.$;

\item[(ii)] $f^{(1)}(t,y,z)\geq f^{(2)}(t,y,z)$, $g^{(1)}(t,y,z)\geq
g^{(2)}(t,y,z)$, $\forall(y,z)\in\mathbb{R}^{2}$;

\item[(iii)] $S_{t}^{(1)}\geq S_{t}^{(2)}$, $0\leq t\leq T$, $q.s.$.
\end{itemize}
Let $(Y^{(i)},Z^{(i)},A^{(i)})$ be the solutions of the reflected $G$-BSDE
with parameters $(\xi^{(i)},f^{(i)},g^{(i)},S^{(i)})$, $i=1,2$, respectively.
Then, we have
$
Y_{t}^{(1)}\geq Y_{t}^{(2)}$, $0\leq t\leq T$, q.s.
\end{theorem}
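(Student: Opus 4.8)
The plan is to prove the comparison theorem (Theorem \ref{the1.16}) for one-dimensional reflected $G$-BSDEs by a penalization argument, reducing it to the comparison theorem for standard (non-reflected) $G$-BSDEs, which is available in the one-dimensional case from \cite{HJPS2}. First I would introduce, for each $n\geq 1$, the penalized $G$-BSDEs associated with the two sets of data: for $i=1,2$,
\[
Y_{t}^{(i),n}=\xi^{(i)}+\int_{t}^{T}f^{(i)}(s,Y_{s}^{(i),n},Z_{s}^{(i),n})ds+\int_{t}^{T}g^{(i)}(s,Y_{s}^{(i),n},Z_{s}^{(i),n})d\langle B\rangle_{s}+n\int_{t}^{T}(Y_{s}^{(i),n}-S_{s}^{(i)})^{-}ds-\int_{t}^{T}Z_{s}^{(i),n}dB_{s}-(K_{T}^{(i),n}-K_{t}^{(i),n}).
\]
By Theorem \ref{the1.14} and the penalization scheme in \cite{LPSH}, these admit unique solutions, the sequence $(Y^{(i),n})_{n\geq1}$ is nondecreasing in $n$, and it converges (in $S_G^{\alpha}$ and hence q.s.) to the $Y$-component $Y^{(i)}$ of the reflected $G$-BSDE with data $(\xi^{(i)},f^{(i)},g^{(i)},S^{(i)})$.

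The key step is then to compare $Y^{(1),n}$ and $Y^{(2),n}$ for each fixed $n$. These are solutions of standard one-dimensional $G$-BSDEs with generators
\[
\tilde f^{(i),n}(t,y,z):=f^{(i)}(t,y,z)+n(y-S_{t}^{(i)})^{-},\qquad \tilde g^{(i)}(t,y,z):=g^{(i)}(t,y,z).
\]
Assumption (i) gives $\xi^{(1)}\geq\xi^{(2)}$ q.s.; assumption (ii) gives $f^{(1)}\geq f^{(2)}$ and $g^{(1)}\geq g^{(2)}$; and assumption (iii) together with the fact that $x\mapsto x^{-}=\max(-x,0)$ is nonincreasing gives $n(y-S_{t}^{(1)})^{-}\leq n(y-S_{t}^{(2)})^{-}$, i.e. $\tilde f^{(1),n}(t,y,z)\geq f^{(1)}(t,y,z)+n(y-S_{t}^{(2)})^{-}\geq f^{(2)}(t,y,z)+n(y-S_{t}^{(2)})^{-}=\tilde f^{(2),n}(t,y,z)$ for all $(y,z)$. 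All the structural hypotheses (H1)--(H2) transfer to $\tilde f^{(i),n},\tilde g^{(i)}$ since $y\mapsto n(y-S_t^{(i)})^{-}$ is $n$-Lipschitz and $(H3)$ ensures $S^{(i)}$ is bounded above by a $G$-It\^o process, so $S^{(i)}_\cdot\in M_G^\beta$; hence the one-dimensional comparison theorem for $G$-BSDEs applies and yields $Y_{t}^{(1),n}\geq Y_{t}^{(2),n}$ for all $t\in[0,T]$, q.s.

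Finally I would pass to the limit: since $Y_{t}^{(i),n}\uparrow Y_{t}^{(i)}$ q.s. as $n\to\infty$ for $i=1,2$, the inequality $Y_{t}^{(1),n}\geq Y_{t}^{(2),n}$ is preserved in the limit, giving $Y_{t}^{(1)}\geq Y_{t}^{(2)}$ for all $t\in[0,T]$, q.s., which is the claim. The main obstacle is making the convergence argument clean: one must be sure the monotone penalization convergence from \cite{LPSH} is stated (or can be quoted) for the data $(\xi^{(i)},f^{(i)},g^{(i)},S^{(i)})$ under exactly hypotheses (H1)--(H4), and in particular that the limit identified via the penalization is indeed the unique reflected-$G$-BSDE solution of Theorem \ref{the1.14}; once that identification and the pointwise (q.s.) monotone convergence are in hand, the comparison passes to the limit immediately. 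An alternative, if one prefers to avoid invoking the penalization convergence, is to argue directly on the reflected equations: write the equation for $\hat Y=Y^{(1)}-Y^{(2)}$, apply a $G$-It\^o / linearization argument to $((\hat Y_t)^{-})^{\alpha}$ (or use the a priori estimate of Proposition \ref{the1.10} with the roles arranged so that $\hat h\leq 0$ and $\hat S\geq 0$), and use the Skorohod-type condition (c) for each solution to control the sign of the increments of $A^{(1)}-A^{(2)}$ on the relevant sets; but the penalization route is shorter and reuses machinery already developed.
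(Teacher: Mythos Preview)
Your penalization approach is correct and is essentially the standard route; in fact the paper does not prove Theorem~\ref{the1.16} at all---it is quoted as a preliminary from \cite{LPSH}, where exactly this penalization argument is used. So strategically you are on target.

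There is, however, a sign slip in your key inequality. From $S_{t}^{(1)}\geq S_{t}^{(2)}$ one gets $y-S_{t}^{(1)}\leq y-S_{t}^{(2)}$, and since $x\mapsto x^{-}$ is nonincreasing this gives $n(y-S_{t}^{(1)})^{-}\geq n(y-S_{t}^{(2)})^{-}$, the \emph{reverse} of what you wrote. Fortunately this is the direction you actually need: combined with $f^{(1)}\geq f^{(2)}$ it yields $\tilde f^{(1),n}\geq \tilde f^{(2),n}$ directly, and your chain of inequalities after the ``i.e.'' is in fact inconsistent with the displayed inequality preceding it. Fix the sign and the argument goes through.

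For comparison, when the paper proves the multi-dimensional analogue (Theorem~\ref{Myth4.1}) it uses a small variant: only the \emph{second} set of data is penalized, while the first solution $Y$ is left as is. Since $Y_{t}\geq S_{t}$, the penalty term $n(Y_{s}-S_{s})^{-}$ vanishes identically, so $Y$ trivially satisfies the penalized equation with the extra nondecreasing process $A$; the comparison theorem for $G$-BSDEs with an additional monotone driver (Theorem~\ref{Myth2-3}) then gives $Y_{t}\geq\bar Y_{t}^{n}$, and one passes to the limit only on the second side. This saves one limit passage but requires the version of the comparison theorem that allows an extra nondecreasing process on one side. Your double-penalization version is equally valid and perhaps conceptually cleaner in the one-dimensional case.
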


\section{Multi-dimensional reflected $G$-BSDEs}

In this paper, we shall consider the $k$-dimensional reflected $G$-BSDE with
diagonal generators in the following form:
\begin{equation}
Y_{t}^{l}=\xi^{l}+\int_{t}^{T}f^{l}(s,Y_{s},Z_{s}^{l})ds+\int_{t}^{T}%
g^{l}(s,Y_{s},Z_{s}^{l})d\langle B\rangle_{s}-\int_{t}^{T}Z_{s}^{l}%
dB_{s}+(A_{T}^{l}-A_{t}^{l}),\ 1\leq l\leq k, \label{Myeq3-1}%
\end{equation}
where $f^l,g^l$ satisfy (A1) and (A2) as in Subsection 2.2, $1\leq l\leq k$. We call $(Y,Z,A)=((Y^{l},1\leq l\leq k)^{T},(Z^{l},1\leq l\leq k)^{T}%
,(A^{l},1\leq l\leq k)^{T})$ a solution for  the reflected $G$-BSDE with generators $f,g$, terminal value
$\xi$ and lower obstacle $S$, if it satisfies the following conditions:

\begin{itemize}
\item[(1)] $(Y^{l},Z^{l},A^{l})\in\mathcal{S}_{G}^{\alpha}(0,T)$ and
$Y_{t}^{l}\geq S_{t}^{l}$, $t\in[0,T]$, where $2\leq\alpha\leq\beta$, $1\leq
l\leq k$ ($\beta$ is the order of integrability for the parameters);

\item[(2)] $Y_{t}^{l}=\xi^{l}+\int_{t}^{T} f^{l}(s,Y_{s},Z_{s}^{l})ds+\int%
_{t}^{T} g^{l}(s,Y_{s},Z_{s}^{l})d\langle B\rangle_{s}-\int_{t}^{T} Z_{s}^{l}
dB_{s}+A_{T}^{l}-A_{t}^{l}$, $1\leq l\leq k$;

\item[(3)] $\{-\int_{0}^{t} (Y_{s}^{l}-S_{s}^{l})dA_{s}^{l}\}_{t\in[0,T]}$ is
a non-increasing $G$-martingale, $1\leq l\leq k$.
\end{itemize}

We need to propose the following assumptions on the obstacle $S$ and terminal value $\xi$: there exists
some $\beta>2$ such that % and some $L>0$, such that

\begin{itemize}
%\item[(A1)] for any $y\in\mathbb{R}^{k}$ and $z\in\mathbb{R}$, $f^{l}(\cdot,\cdot,y,z), g^{l}(\cdot,\cdot,y,z)\in M_{G}^{\beta}(0,T)$, $1\leq l\leqk$;

%\item[(A2)] for any $y,y^{\prime}\in\mathbb{R}^{k}$, $z,z^{\prime}\in\mathbb{R}$ and $1\leq l\leq k$,
%\[
%|f^{l}(t,y,z)-f^{l}(t,y^{\prime},z^{\prime})|+|g^l(t,y,z)-g^{l}(t,y^{\prime},z^{\prime})|\leq L(|y-y^{\prime}|+|z-z^{\prime}|);
%\]

\item[(A3)] for any $1\leq l\leq k$, $S^{l}_{t}\leq I^{l}_{t}$, $t\in[0,T]$,
where $I^{l}$ is a generalized $G$-It\^{o} process:
\[
I^{l}_{t}=I^{l}_{0}+\int_{0}^{t} b^{l}(s)ds+\int_{0}^{t} \sigma^{l}%
(s)dB_{s}+K^{l}_{t},
\]
with $I_0\in\mathbb{R}$, $b^{l}\in M_{G}^{\beta}(0,T)$, $\sigma^{l}\in H_{G}^{\beta}(0,T)$, $K^l\in \mathcal{A}_G^\beta(0,T)$;

\item[(A4)] for any $1\leq l\leq k$, $S_{T}^{l}\leq\xi^{l}$ and $\xi^{l}\in
L_{G}^{\beta}(\Omega_{T})$.
\end{itemize}

For simplicity,  we denote
\[
\phi(t,y,z)=(\phi^{1}(t,y,z^{1}),\cdots,\phi^{k}(t,y,z^{k}))^{T},
\]
for any $t\in\lbrack0,T]$, $y=(y^{1}%
,\cdots,y^{k})^{T},z=(z^{1},\cdots,z^{k})^{T}\in\mathbb{R}^{k}$ and any
function $\phi^{l}:[0,T]\times\Omega_{T}\times\mathbb{R}^{k}\times
\mathbb{R}\rightarrow\mathbb{R}$, $1\leq l\leq k$.
 We denote by $\mathcal{S}_{G}^{\alpha}(0,T;\mathbb{R}^{k})$ the collection
of $k$-dimensional triples $(Y,Z,A)=((Y^{1},Z^{1},A^{1}),\cdots,(Y^{k}%
,Z^{k},A^{k}))^{T},$ for $(Y^{l},Z^{l},A^{l})\in\mathcal{S}_{G}^{\alpha}(0,T),
1\leq l\leq k$. 

We first introduce the main result in this paper:

\begin{theorem}
\label{MainThm} Assume that $(\xi,f,g,S)$ satisfy Assumptions
(A1)-(A4). Then,  the multi-dimensional reflected $G$-BSDE (\ref{Myeq3-1}) has a
unique solution $(Y,Z,A)\in\mathcal{S}_{G}^{\alpha}(0,T;\mathbb{R}^{k})$.
\end{theorem}

\subsection{A priori estimates}

In this subsection, we present some useful a priori estimates for multi-dimensional reflected $G$-BSDEs. For simplicity, we only consider the case that   $g^l\equiv0,l=1,\cdots,k,$ but similar results still hold for the general case. In the sequel, $C$ will always be a universal constant which may change from line to line and $\mathbf{0}$ represents the $k$-dimensional zero vector.

%We have the following two a priori estimates, where, for simplicity,  $g^l,l=1,\cdots,k,$ are assumed to be identically $0$, and results for general $g^l$ are just similar. %{\color{red}(I use this kind of phrase. Is this okay?)}

\begin{proposition}
\label{estimate for Y} Let $(Y,Z,A)$ be the solution of reflected $G$-BSDE
with parameters $(\xi,f,S)$ satisfying (A1)-(A4). Then, for any $2\leq
\alpha\leq \beta$, there exists a constant $C$ depending on $T,G,k,L$ and $\alpha$
such that
\[
|Y_{t}|^{\alpha}\leq C\hat{\mathbb{E}}_{t}[|\xi|^{\alpha}+\sup_{s\in
[t,T]}|I_{s}|^{\alpha}+\int_{t}^{T} (|f(s,\mathbf{0},0)|^{\alpha
}+|b(s)|^{\alpha}+|\sigma(s)|^{\alpha})ds].
\]

\end{proposition}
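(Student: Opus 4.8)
The plan is to reduce the $k$-dimensional bound to the one-dimensional a priori estimate of Proposition~\ref{the1.9}, applied coordinate by coordinate, and then to absorb the resulting coupling by a Gronwall-type argument performed under the conditional $G$-expectation.

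First I would freeze the off-diagonal coordinates. Fix $1\le l\le k$; since the generators are diagonal in $z$, the $l$-th line of \eqref{Myeq3-1} may be read as a \emph{one}-dimensional reflected $G$-BSDE with generator $\tilde f^{l}(s,\omega,z):=f^{l}(s,\omega,Y_{s}(\omega),z)$ (which no longer depends on a ``$y$'' variable), terminal value $\xi^{l}$ and lower obstacle $S^{l}$; indeed $(Y^{l},Z^{l},A^{l})\in\mathcal{S}_{G}^{\alpha}(0,T)$ satisfies the equation and the Skorokhod-type $G$-martingale condition of Section~2.3 coordinate-wise. By (A1)--(A4), the Lipschitz bound $|\tilde f^{l}(s,z)|\le|f^{l}(s,\mathbf{0},0)|+L(|Y_{s}|+|z|)$ and the $M_{G}^{\beta}$-regularity of the solution, the data $(\xi^{l},\tilde f^{l},0,S^{l})$ satisfies \textsc{(H1)}--\textsc{(H4)}, so Proposition~\ref{the1.9} (with $g\equiv0$) gives, for every $t$,
\[
|Y^{l}_{t}|^{\alpha}\le C\,\hat{\mathbb{E}}_{t}\Big[|\xi^{l}|^{\alpha}+\sup_{s\in[t,T]}|I^{l}_{s}|^{\alpha}+\int_{t}^{T}\big(|f^{l}(s,Y_{s},0)|^{\alpha}+|b^{l}(s)|^{\alpha}+|\sigma^{l}(s)|^{\alpha}\big)\,ds\Big].
\]
Next, bounding $|f^{l}(s,Y_{s},0)|^{\alpha}\le C(|f^{l}(s,\mathbf{0},0)|^{\alpha}+|Y_{s}|^{\alpha})$, summing over $l$ and using the equivalence of norms on $\mathbb{R}^{k}$, I would arrive at
\[
u_{t}:=|Y_{t}|^{\alpha}\le C\,\hat{\mathbb{E}}_{t}[\eta_{t}]+C\,\hat{\mathbb{E}}_{t}\Big[\int_{t}^{T}u_{s}\,ds\Big],
\]
where $\eta_{t}:=|\xi|^{\alpha}+\sup_{s\in[t,T]}|I_{s}|^{\alpha}+\int_{t}^{T}(|f(s,\mathbf{0},0)|^{\alpha}+|b(s)|^{\alpha}+|\sigma(s)|^{\alpha})\,ds$ and the sub-additivity of $\hat{\mathbb{E}}_{t}$ has been used to split the right-hand side.

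The remaining step is to dispose of the term $C\,\hat{\mathbb{E}}_{t}[\int_{t}^{T}u_{s}\,ds]$, which is where the usual Gronwall lemma fails. I would fix $t$ and consider the map $r\mapsto w(r):=\hat{\mathbb{E}}_{t}[u_{r}]$ on $[t,T]$, finite q.s.\ since $\hat{\mathbb{E}}_{t}[\sup_{s\in[0,T]}|Y_{s}|^{\alpha}]<\infty$. Writing the displayed inequality at time $r\ge t$, applying $\hat{\mathbb{E}}_{t}[\cdot]$, and using the tower property $\hat{\mathbb{E}}_{t}\circ\hat{\mathbb{E}}_{r}=\hat{\mathbb{E}}_{t}$, the monotonicity $\eta_{r}\le\eta_{t}$, and the one-sided interchange $\hat{\mathbb{E}}_{t}[\int_{r}^{T}u_{s}\,ds]\le\int_{r}^{T}\hat{\mathbb{E}}_{t}[u_{s}]\,ds$ (a consequence of sub-additivity), one gets
\[
w(r)\le C\,\hat{\mathbb{E}}_{t}[\eta_{t}]+C\int_{r}^{T}w(s)\,ds,\qquad r\in[t,T].
\]
Now the classical backward Gronwall lemma (for q.s.\ $\omega$) yields $w(r)\le C\,e^{C(T-r)}\,\hat{\mathbb{E}}_{t}[\eta_{t}]$, and $r=t$, together with $w(t)=\hat{\mathbb{E}}_{t}[u_{t}]=u_{t}=|Y_{t}|^{\alpha}$, gives the asserted estimate after absorbing $e^{CT}$ into the constant.

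The Lipschitz manipulations and the norm equivalences are routine, and checking that each coordinate falls under Proposition~\ref{the1.9} is straightforward once one records that the solution has $M_{G}^{\beta}$-regularity, exactly as for the non-reflected system in Theorem~\ref{my17} (alternatively, one repeats the It\^o-formula computation behind Proposition~\ref{the1.9} directly for the system, which goes through verbatim by the diagonal structure). The delicate point --- and the one I expect to be the main obstacle --- is the Gronwall closure under the sublinear conditional expectation: since $\hat{\mathbb{E}}_{t}[\cdot]$ is only sub-additive and cannot be freely commuted with the time integral, one must run the argument through $w(r)=\hat{\mathbb{E}}_{t}[u_{r}]$ and rely solely on the tower property and the one-sided Fubini inequality; this device is, however, by now standard in the $G$-BSDE literature.
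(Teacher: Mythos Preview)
Your proposal is correct and follows essentially the same route as the paper: apply the one-dimensional a priori estimate Proposition~\ref{the1.9} coordinate-wise (the paper freezes only the off-diagonal entries of $Y$ in the generator, you freeze the full vector, but after the Lipschitz bound this makes no difference), sum over $l$, and close with a backward Gronwall argument on $r\mapsto\hat{\mathbb{E}}_{t}[|Y_{r}|^{\alpha}]$ via the tower property and the sub-additive one-sided Fubini inequality. Your identification of the Gronwall step under the sublinear conditional expectation as the delicate point matches exactly how the paper handles it.
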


\begin{proof}
By Theorem \ref{the1.14}, for any $1\leq l\leq k$ and $t\leq r\leq T$, we
have
\begin{align*}
|Y_{r}^{l}|^{\alpha}\leq &  C\hat{\mathbb{E}}_{r}[|\xi^{l}|^{\alpha}%
+\sup_{s\in[r,T]}|I^{l}_{s}|^{\alpha}+\int_{r}^{T} (|\tilde{f}^{l}%
(s)|^{\alpha}+|b^{l}(s)|^{\alpha}+|\sigma^{l}(s)|^{\alpha})ds]\\
\leq &  C\hat{\mathbb{E}}_{r}[|\xi|^{\alpha}+\sup_{s\in[t,T]}|I_{s}|^{\alpha
}+\int_{t}^{T} (|f(s,\mathbf{0},0)|^{\alpha}+|b(s)|^{\alpha}+|\sigma
(s)|^{\alpha})ds+\int_{r}^{T} |Y_{s}|^{\alpha}ds],
\end{align*}
where $\tilde{f}^{l}(s)=f^{l}(s,Y^{1}_{s},\cdots,Y^{l-1}_{s},0,Y^{l+1}%
_{s},\cdots,Y^{k}_{s},0)$. Summing up over $l$, we obtain that
\[
|Y_{r}|^{\alpha}\leq C\hat{\mathbb{E}}_{r}[|\xi|^{\alpha}+\sup_{s\in
[t,T]}|I_{s}|^{\alpha}+\int_{t}^{T} (|f(s,\mathbf{0},0)|^{\alpha
}+|b(s)|^{\alpha}+|\sigma(s)|^{\alpha})ds]+C\int_{r}^{T} \hat{\mathbb{E}}%
_{r}[|Y_{s}|^{\alpha}] ds.
\]
Taking conditional expectations on both sides of the above equation, we have
\[
\hat{\mathbb{E}}_{t}[|Y_{r}|^{\alpha}] \leq C\hat{\mathbb{E}}_{t}%
[|\xi|^{\alpha}+\sup_{s\in[t,T]}|I_{s}|^{\alpha}+\int_{t}^{T} (|f(s,\mathbf{0}%
,0)|^{\alpha}+|b(s)|^{\alpha}+|\sigma(s)|^{\alpha})ds]+C\int_{r}^{T}
\hat{\mathbb{E}}_{t}[|Y_{s}|^{\alpha}] ds.
\]
It follows from the Gronwall inequality that
\[
\hat{\mathbb{E}}_{t}[|Y_{r}|^{\alpha}] \leq C\hat{\mathbb{E}}_{t}%
[|\xi|^{\alpha}+\sup_{s\in[t,T]}|I_{s}|^{\alpha}+\int_{t}^{T} (|f(s,\mathbf{0}%
,0)|^{\alpha}+|b(s)|^{\alpha}+|\sigma(s)|^{\alpha})ds],
\]
which is the desired result by letting $r=t$.
\end{proof}

\begin{proposition}
\label{difference of Y}  Assume that $\prescript{i}{}{\xi}^{l}$,
$\prescript{i}{}{f}^{l}$, $\prescript{i}{}{S}^{l}$ satisfy (A1)-(A4), 
	$1\leq l\leq k$, $i=1,2$. Let
$(\prescript{i}{}{Y},\prescript{i}{}{Z},\prescript{i}{}{A})\in\mathcal{S}%
_{G}^{\alpha}(0,T;\mathbb{R}^{k})$ be the solution to reflected $G$-BSDEs with
parameters
$(\prescript{i}{}{\xi},\prescript{i}{}{f},\prescript{i}{}{S})$, $i=1,2$,
for some $2\leq\alpha\leq \beta$. Set $\hat{Y}_{t}=\prescript{1}{}{Y}_{t}%
-\prescript{2}{}{Y}_{t}$. Then there exists a constant $C$ depending on
$T,G,k,L$ and $\alpha$ such that
\[
|\hat{Y}_{t}|^{\alpha}\leq C\{\hat{\mathbb{E}}_{t}[|\hat{\xi}|^{\alpha}%
+\int_{t}^{T}|\hat{f}_{s}|^{\alpha}ds]+(\hat
{\mathbb{E}}_{t}[\sup_{s\in\lbrack t,T]}|\hat{S}_{s}|^{\alpha}])^{\frac
{1}{\alpha}}\Psi_{t,T}^{\frac{\alpha-1}{\alpha}}\},
\]
where $\hat{\xi}=\prescript{1}{}{\xi}-\prescript{2}{}{\xi}$, $\hat{f}%
_{s}=\prescript{1}{}{f}(s,\prescript{2}{}{Y}_{s},\prescript{2}{}{Z}_{s}%
)-\prescript{2}{}{f}(s,\prescript{2}{}{Y}_{s},\prescript{2}{}{Z}_{s})$,
$\hat{S}_{t}=\prescript{1}{}{S}_{t}-\prescript{2}{}{S}_{t}$ and
%for
%$h=f,g$ and%
\[
\Psi_{t,T}=\sum_{i=1}^{2}\hat{\mathbb{E}}_{t}[|\prescript{i}{}{\xi}|^{\alpha
}+\sup_{s\in\lbrack t,T]}|\prescript{i}{}{I}_{s}|^{\alpha}+\int_{t}%
^{T}|\prescript{i}{}{f}(s,\mathbf{0},0)|^{\alpha}%
+|\prescript{i}{}{b}(s)|^{\alpha}+|\prescript{i}{}{\sigma}(s)|^{\alpha
}+|\prescript{i}{}{Y}_{s}|^{\alpha}ds].
\]

\end{proposition}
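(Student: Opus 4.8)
The plan is to reduce the multi-dimensional estimate to the one-dimensional stability result (Proposition \ref{the1.10}) applied componentwise, and then absorb the coupling through the $y$-variable by a Gronwall-type argument, exactly as in the proof of Proposition \ref{estimate for Y}. Fix $1\leq l\leq k$. The $l$-th components $\prescript{1}{}{Y}^{l}$ and $\prescript{2}{}{Y}^{l}$ solve one-dimensional reflected $G$-BSDEs whose generators, \emph{when we freeze the other components}, are
\[
\bar f^{(i),l}(s,y^{l},z^{l}):=\prescript{i}{}{f}^{l}(s,\prescript{i}{}{Y}^{1}_{s},\dots,\prescript{i}{}{Y}^{l-1}_{s},y^{l},\prescript{i}{}{Y}^{l+1}_{s},\dots,\prescript{i}{}{Y}^{k}_{s},z^{l}),
\]
which satisfy (H1)--(H2) with the same Lipschitz constant $L$. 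Applying Proposition \ref{the1.10} to this pair of one-dimensional data gives
\[
|\hat Y^{l}_{t}|^{\alpha}\leq C\Big\{\hat{\mathbb{E}}_{t}\big[|\hat\xi^{l}|^{\alpha}+\int_{t}^{T}|\bar h^{l}_{s}|^{\alpha}ds\big]+\big(\hat{\mathbb{E}}_{t}[\sup_{s\in[t,T]}|\hat S^{l}_{s}|^{\alpha}]\big)^{1/\alpha}\Psi^{l,\,\frac{\alpha-1}{\alpha}}_{t,T}\Big\},
\]
where $\bar h^{l}_{s}=|\bar f^{(1),l}(s,\prescript{2}{}{Y}^{l}_{s},\prescript{2}{}{Z}^{l}_{s})-\bar f^{(2),l}(s,\prescript{2}{}{Y}^{l}_{s},\prescript{2}{}{Z}^{l}_{s})|$, i.e. the difference of the two generators evaluated at $({}^{2}Y,{}^{2}Z)$ but with \emph{different} frozen $y$-components: ${}^{1}Y$ for the first, ${}^{2}Y$ for the second.

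The next step is to split $\bar h^{l}_{s}$ using the triangle inequality and (A2):
\[
\bar h^{l}_{s}\leq |\prescript{1}{}{f}^{l}(s,\prescript{2}{}{Y}_{s},\prescript{2}{}{Z}^{l}_{s})-\prescript{2}{}{f}^{l}(s,\prescript{2}{}{Y}_{s},\prescript{2}{}{Z}^{l}_{s})|+L\,|\prescript{1}{}{Y}_{s}-\prescript{2}{}{Y}_{s}|=|\hat f^{l}_{s}|+L|\hat Y_{s}|,
\]
so that $|\bar h^{l}_{s}|^{\alpha}\leq C(|\hat f^{l}_{s}|^{\alpha}+|\hat Y_{s}|^{\alpha})$. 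Likewise I will check that the one-dimensional quantity $\Psi^{l}_{t,T}$ coming out of Proposition \ref{the1.10}, which involves $\sup_{s}|\prescript{i}{}{I}^{l}_{s}|^{\alpha}$, $|\prescript{i}{}{\xi}^{l}|^{\alpha}$, the driver-at-zero terms $|\bar f^{(i),l}(s,0,0)|$ and the coefficients $|\prescript{i}{}{b}^{l}|,|\prescript{i}{}{\sigma}^{l}|$, is dominated — after using (A2) once more to pass from $\bar f^{(i),l}(s,0,0)=\prescript{i}{}{f}^{l}(s,\prescript{i}{}{Y}_{s},0)$ back to $\prescript{i}{}{f}^{l}(s,\mathbf{0},0)$ at the cost of an extra $L|\prescript{i}{}{Y}_{s}|$ term — by the claimed $\Psi_{t,T}$. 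Summing the $l$-wise inequalities over $1\leq l\leq k$ and using $|\hat Y|^{\alpha}\leq C\sum_{l}|\hat Y^{l}|^{\alpha}$, $|\hat S|^{\alpha}\leq C\sum_l|\hat S^l|^{\alpha}$, together with the elementary bound $\sum_l (a_l)^{1/\alpha}(b_l)^{(\alpha-1)/\alpha}\leq k^{1/\alpha}(\sum_l a_l)^{1/\alpha}(\max_l b_l)^{(\alpha-1)/\alpha}$ (or simply dominating each $\Psi^l_{t,T}\leq\Psi_{t,T}$), yields
\[
|\hat Y_{r}|^{\alpha}\leq C\Big\{\hat{\mathbb{E}}_{r}\big[|\hat\xi|^{\alpha}+\int_{r}^{T}|\hat f_{s}|^{\alpha}ds\big]+\big(\hat{\mathbb{E}}_{r}[\sup_{s\in[r,T]}|\hat S_{s}|^{\alpha}]\big)^{1/\alpha}\Psi_{r,T}^{(\alpha-1)/\alpha}\Big\}+C\,\hat{\mathbb{E}}_{r}\big[\int_{r}^{T}|\hat Y_{s}|^{\alpha}ds\big].
\]

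Finally I would close the argument exactly as in Proposition \ref{estimate for Y}: take $\hat{\mathbb{E}}_{t}[\cdot]$ on both sides (using the tower property, the fact that $\hat{\mathbb{E}}_{t}$ is monotone and sublinear, and that $\sup_{s\in[r,T]}\subseteq\sup_{s\in[t,T]}$ and $\Psi_{r,T}\le$ a version dominated by the conditional expectation of $\Psi$-type data so one can replace $\Psi_{r,T}$ by a $t$-measurable majorant of the stated $\Psi_{t,T}$), obtain
\[
\hat{\mathbb{E}}_{t}[|\hat Y_{r}|^{\alpha}]\leq \Lambda_{t}+C\int_{r}^{T}\hat{\mathbb{E}}_{t}[|\hat Y_{s}|^{\alpha}]\,ds,
\]
where $\Lambda_{t}$ denotes the claimed right-hand side, and apply Gronwall's inequality on $[t,T]$ in the variable $r$, then set $r=t$. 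The only genuinely delicate point is the bookkeeping around $\Psi_{t,T}$: the constant produced by Proposition \ref{the1.10} carries the "frozen" drivers-at-zero $\bar f^{(i),l}(s,0,0)$ and the obstacle-dominating processes $\prescript{i}{}{I}^{l}$, and I must verify that after replacing $\bar f^{(i),l}(s,0,0)$ by $\prescript{i}{}{f}^{l}(s,\mathbf{0},0)$ (paying $L|\prescript{i}{}Y_s|$) the resulting bound is $\leq C\,\Psi_{t,T}$ with $\Psi_{t,T}$ as stated — in particular that the $|\prescript{i}{}{Y}_{s}|^{\alpha}$ term already included in $\Psi_{t,T}$ absorbs all such contributions, and that the conditional-expectation nesting $\hat{\mathbb{E}}_{t}[\sup_{s}\hat{\mathbb{E}}_{s}[\cdots]]$ present in the one-dimensional $\Psi$ is compatible with the (simpler) single conditional expectation written in the statement. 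This is routine but is where all the constants live, so it is the step I would write out most carefully.
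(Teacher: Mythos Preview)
Your proposal is correct and follows essentially the same route as the paper's proof: apply the one-dimensional stability estimate (Proposition~\ref{the1.10}) componentwise with the other $y$-components frozen, use the Lipschitz condition to separate the genuine driver-difference $\hat f$ from the coupling term $L|\hat Y|$, sum over $l$, take $\hat{\mathbb{E}}_{t}$, and close with Gronwall before setting $r=t$. The bookkeeping you flag around $\Psi^{l}_{t,T}$ versus $\Psi_{t,T}$ (in particular absorbing $\bar f^{(i),l}(s,0,0)=\prescript{i}{}{f}^{l}(s,\prescript{i}{}{Y}_{s},0)$ into $|\prescript{i}{}{f}(s,\mathbf{0},0)|^{\alpha}+|\prescript{i}{}{Y}_{s}|^{\alpha}$, and handling the nested conditional expectation from the one-dimensional $\Psi$) is exactly the place where the paper is terse, so your instinct to write that step out carefully is well placed.
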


\begin{proof}
For any $1\leq l\leq k$, by Proposition \ref{the1.10}, we have for any
$s\in[t,T]$, %{\color{red} the equation later is too long}
\begin{align*}
|\hat{Y}^{l}_{s}|^{\alpha}  &  \leq C\{\hat{\mathbb{E}}_{s}[|\hat{\xi}%
^{l}|^{\alpha}+\int_{s}^{T} |\hat{f}^l_r|^{\alpha}dr]+(\hat{\mathbb{E}%
}_{s}[\sup_{u\in[s,T]}|\hat{S}^{l}_{u}|^{\alpha}])^{\frac{1}{\alpha}}(\Psi
^{l}_{s,T})^{\frac{\alpha-1}{\alpha}}\}\\
&  \leq C\{\hat{\mathbb{E}}_{s}[|\hat{\xi}|^{\alpha}+\int_{s}^{T} |\hat{f}%
_{r}|^{\alpha}dr +\int_{s}^{T} |\hat{Y}_{r}|^{\alpha}dr]+(\hat{\mathbb{E}}%
_{s}[\sup_{u\in[s,T]}|\hat{S}_{u}|^{\alpha}])^{\frac{1}{\alpha}}\Psi
_{s,T}^{\frac{\alpha-1}{\alpha}}\},
\end{align*}
where $\hat{f}^l_r=\prescript{1}{}{f}^{l}(r,
	\prescript{1}{}{Y}^{(l)}_{r},\prescript{2}{}{Z}^l_{r})-\prescript{2}{}{f}^{l}(r,
	\prescript{2}{}{Y}_{r},\prescript{2}{}{Z}^l_{r})$, $\prescript{1}{}{Y}^{(l)}=(\prescript{1}{}{Y}^{1}, \cdots,
\prescript{1}{}{Y}^{l-1},\prescript{2}{}{Y}^{l},\prescript{1}{}{Y}^{l+1}%
,\cdots,\prescript{1}{}{Y}^{k})$ and
\begin{align*}
\Psi^{l}_{s,T}=\sum_{i=1}^{2}\hat{\mathbb{E}}_{s}[|\prescript{i}{}{\xi}^{l}%
|^{\alpha}+\sup_{u\in[s,T]}|\prescript{i}{}{I}^{l}_{u}|^{\alpha}+\int_{s}^{T}
(|\prescript{i}{}{f}^{l}(r)|^{\alpha}+|\prescript{i}{}{b}^{l}(r)|^{\alpha
}+|\prescript{i}{}{\sigma}^{l}(r)|^{\alpha})dr],
\end{align*}
and $\prescript{i}{}{f}^{l}(r)=f^{l}(r,\prescript{i}{}{Y}^{1}_{r}%
,\cdots,\prescript{i}{}{Y}^{l-1}_{r},0,\prescript{i}{}{Y}^{l+1}_{r}%
,\cdots,\prescript{i}{}{Y}^{k}_{r},0)$. Summing up over $l$, we obtain that
for any $s\geq t$,
\[
|\hat{Y}_{s}|^{\alpha}\leq C\{\hat{\mathbb{E}}_{s}[|\hat{\xi}|^{\alpha}%
+\int_{t}^{T} |\hat{f}_{r}|^{\alpha}dr]+(\hat{\mathbb{E}}_{s}[\sup_{u\in
[t,T]}|\hat{S}_{u}|^{\alpha}])^{\frac{1}{\alpha}}\Psi_{s,T}^{\frac{\alpha
-1}{\alpha}}\} +C\int_{s}^{T} \hat{\mathbb{E}}_{s}[|\hat{Y}_{r}|^{\alpha}] dr.
\]
Taking conditional expectations on both sides implies that
\[
\hat{\mathbb{E}}_{t}[|\hat{Y}_{s}|^{\alpha}]\leq C\{\hat{\mathbb{E}}_{t}%
[|\hat{\xi}|^{\alpha}+\int_{t}^{T} |\hat{f}_{r}|^{\alpha}dr]+(\hat{\mathbb{E}%
}_{t}[\sup_{u\in[t,T]}|\hat{S}_{u}|^{\alpha}])^{\frac{1}{\alpha}}\Psi
_{t,T}^{\frac{\alpha-1}{\alpha}}\} +C\int_{s}^{T} \hat{\mathbb{E}}_{t}[|\hat
{Y}_{r}|^{\alpha}] dr.
\]
Applying the Gronwall inequality, we have for any $s\geq t$
\[
\hat{\mathbb{E}}_{t}[|\hat{Y}_{s}|^{\alpha}]\leq C\{\hat{\mathbb{E}}_{t}%
[|\hat{\xi}|^{\alpha}+\int_{t}^{T} |\hat{f}_{r}|^{\alpha}dr]+(\hat{\mathbb{E}%
}_{t}[\sup_{u\in[t,T]}|\hat{S}_{u}|^{\alpha}])^{\frac{1}{\alpha}}\Psi
_{t,T}^{\frac{\alpha-1}{\alpha}}\}.
\]
Letting $s=t$, we get the desired result.
\end{proof}

\subsection{Construction via penalization method}

For any $1\leq l\leq k$, consider the following $k$-dimensional $G$-BSDEs
parameterized by $n=1,2,\cdots$,
\begin{equation}
\label{penal 1}Y^{l,n}_{t}=\xi^{l}+\int_{t}^{T} f^{l}(s,Y_{s}^{n},Z_{s}%
^{l,n})ds+n\int_{t}^{T} (Y_{s}^{l,n}-S^{l}_{s})^{-}ds-\int_{t}^{T} Z_{s}%
^{l,n}dB_{s}-(K_{T}^{l,n}-K^{l,n}_{t}),
\end{equation}
where $Y^{n}=(Y^{1,n},\cdots, Y^{k,n})$. Let $L_{t}^{l,n}=\int_{0}^{t}
n(Y_{s}^{l,n}-S^{l}_{s})^{-} ds$. It is easy to check that $L^{l,n}$ is a
non-decreasing process and
\begin{equation}
\label{penal 2}Y^{l,n}_{t}=\xi^{l}+\int_{t}^{T} f^{l}(s,Y_{s}^{n},Z_{s}%
^{l,n})ds-\int_{t}^{T} Z_{s}^{l,n}dB_{s}-(K_{T}^{l,n}-K^{l,n}_{t}%
)+(L^{l,n}_{T}-L^{l,n}_{t}).
\end{equation}
The objective is to prove that for any $1\leq l\leq k$, the processes
$(Y^{l,n},Z^{l,n},A^{l,n})$ converge to $(Y^{l},Z^{l},A^{l})$, which is the
solution to the reflected $G$-BSDEs, where $A^{l,n}=L^{l,n}-K^{l,n}$. The
proof is similar with the one-dimensional case studied in \cite{LPSH}. The
main difference comes from the appearance of $Y^{j,n}$ in the generator $f$ in Equation \eqref{penal 1} with $j\neq l$, which leads to the modifications on the proofs for the estimate of $Y^{n},
Y^{n}-Y^{m}$ and also results in the lack of comparison theorem (which needs a kind of monotonicity condition as (i) of Theorem \ref{Myth2-3}).

\begin{lemma}
\label{est-YZKL} There exists a constant $C$ depending on $\alpha,T,L,k,G$,
but not on $n$, such that for $2\leq\alpha<\beta$ and $1\leq l\leq k$,
\[
\hat{\mathbb{E}}[\sup_{t\in[0,T]}|Y_{t}^{n}|^{\alpha}]\leq C,\text{ }
\hat{\mathbb{E}}[|K_{T}^{l,n}|^{\alpha}]\leq C,\text{ }\hat{\mathbb{E}}%
[|L_{T}^{l,n}|^{\alpha}]\leq C, \text{ }\hat{\mathbb{E}}[(\int_{0}^{T}
|Z_{t}^{l,n}|^{2} dt)^{\frac{\alpha}{2}}]\leq C.
\]

\end{lemma}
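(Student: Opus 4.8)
The plan is to obtain the four uniform bounds in two stages: first a bound on $\hat{\mathbb{E}}[\sup_{t}|Y^n_t|^\alpha]$ that does not involve $n$, and then, with this in hand, uniform bounds on $K^{l,n}$, $L^{l,n}$ and $Z^{l,n}$ by feeding the $Y^n$-bound into the standard $G$-BSDE a priori estimates (Proposition \ref{the1.6} and Theorem \ref{my17}-type estimates). The key point that makes the first stage work is a comparison sandwich: the penalized solution $Y^{l,n}$ should lie below the solution $\bar Y^{l}$ of a $G$-BSDE \emph{without} reflection whose terminal value and generator dominate, and above the solution of a suitable lower equation. Concretely, since $f^l$ is Lipschitz in $y$ and the penalization term $n(y-S^l)^- \geq 0$, a dominating non-reflected equation can be built whose parameters do not depend on $n$; the comparison theorem (Theorem \ref{Myth2-3}, applied component-by-component using the diagonal structure) then yields $Y^{l,n}_t \leq \bar Y^l_t$ for all $n$. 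For the lower bound, I would compare $Y^{l,n}$ downward against the solution $\underline Y^l$ of the $G$-BSDE obtained by dropping the (nonnegative) penalization term entirely; this also gives an $n$-free bound. Thus $|Y^{l,n}_t|$ is dominated, uniformly in $n$, by $\sup_t(|\bar Y^l_t| + |\underline Y^l_t|)$, whose $\alpha$-th $G$-moment is finite and $n$-independent by Theorem \ref{my17}. Summing over $l$ gives $\hat{\mathbb{E}}[\sup_{t\in[0,T]}|Y^n_t|^\alpha]\leq C$.

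An alternative (and perhaps cleaner, given that it mirrors what the authors flag as the "main difference") first stage is to apply Proposition \ref{estimate for Y}-type reasoning directly: interpret \eqref{penal 1} as a multi-dimensional $G$-BSDE and use the representation $|Y^{l,n}_r|^\alpha \leq C\hat{\mathbb{E}}_r[|\xi^l|^\alpha + \sup_{s\in[r,T]}|I^l_s|^\alpha + \int_r^T(\cdots)ds + \int_r^T |Y^n_s|^\alpha ds]$, where the term $n(Y^{l,n}-S^l)^-$ is handled exactly as in the one-dimensional reflected case in \cite{LPSH} — the point being that the obstacle constraint $S^l \leq I^l$ lets one bound the penalization term's contribution by $I^l$ without any $n$. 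Summing over $l$, taking conditional expectation at time $t$, and applying Gronwall (as in the proof of Proposition \ref{estimate for Y}) gives the uniform $Y^n$-bound. I expect this is the route intended, since it reuses machinery already set up.

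Once $\hat{\mathbb{E}}[\sup_{t}|Y^n_t|^\alpha]\leq C$ is established, the remaining three bounds are routine. Rewrite \eqref{penal 2} componentwise as a one-dimensional $G$-BSDE for $Y^{l,n}$ with driver $\tilde f^{l,n}(s,z):=f^l(s,Y^n_s,z)$ (now a genuine Lipschitz-in-$z$ driver with an $M_G^\beta$ "constant term" $f^l(s,Y^n_s,0)$ that is already controlled via the $Y^n$-bound and (A1)), together with the increasing process $L^{l,n}$ and the decreasing $G$-martingale $K^{l,n}$. Applying the a priori estimate of Proposition \ref{the1.6} (for the version with an added increasing finite-variation term, which is exactly the setting of Theorem \ref{Myth2-3}'s framework) gives $\hat{\mathbb{E}}[(\int_0^T |Z^{l,n}_s|^2 ds)^{\alpha/2}] \leq C$ and $\hat{\mathbb{E}}[|K^{l,n}_T|^\alpha + |L^{l,n}_T|^\alpha]\leq C$, with $C$ depending only on $\alpha,T,L,k,G$ through the already-established $Y^n$-bound and the $M_G^\beta$/$H_G^\beta$/$S_G^\beta$-norms of the data in (A1)--(A4); crucially none of these depend on $n$. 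Summing over $l$ finishes the proof.

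The main obstacle is the first stage — obtaining the $n$-free bound on $\hat{\mathbb{E}}[\sup_t|Y^n_t|^\alpha]$ — because the coupling through $Y^{j,n}$, $j\neq l$, in the generator $f^l$ means one cannot simply quote the one-dimensional estimate of \cite{LPSH} for each $l$ in isolation; one must either set up the comparison sandwich carefully (checking that the diagonal monotonicity hypothesis (i) of Theorem \ref{Myth2-3} holds for the chosen dominating/dominated equations) or run the Gronwall argument on the \emph{sum} over $l$ so that the cross terms $|Y^{j,n}_s|$ are absorbed into $\int_r^T |Y^n_s|^\alpha ds$ before Gronwall is applied. The penalization term itself is not an obstacle: its sign lets it be dropped for the lower comparison and bounded by $I^l$ (via (A3)) for the upper one, exactly as in the one-dimensional theory.
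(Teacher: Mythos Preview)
Your alternative route---the Gronwall argument on the sum over $l$, with the penalization term handled via the obstacle-dominance assumption (A3)---is exactly what the paper does, and your second-stage deduction of the $Z^{l,n}$, $K^{l,n}$, $L^{l,n}$ bounds via Proposition~\ref{the1.6} (applied with $A^{l,n}=L^{l,n}-K^{l,n}$, which is non-decreasing, and driver ``constant term'' $f^l(s,Y^{1,n}_s,\ldots,Y^{l-1,n}_s,0,Y^{l+1,n}_s,\ldots,Y^{k,n}_s,0)$) matches too. Concretely, the paper applies It\^o's formula to $|Y^{l,n}_t-I^l_t|^\alpha e^{rt}$: the key sign observation $(Y^{l,n}_t-I^l_t)(Y^{l,n}_t-S^l_t)^-\leq 0$ lets one discard the $dL^{l,n}$ contribution, the cross terms $\sum_{j\neq l}|Y^{j,n}_s-I^j_s|^\alpha$ survive on the right, and summing over $l$, taking conditional expectations, Gronwall, and Theorem~\ref{the1.2} produce the uniform sup bound on $Y^n$.

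Your first approach, however, has a gap in the upper half of the sandwich. You write that because the penalization term $n(y^l-S^l_s)^-\geq 0$, ``a dominating non-reflected equation can be built whose parameters do not depend on $n$''. This reads the comparison direction backwards: nonnegativity of the penalization term tells you (via Theorem~\ref{Myth2-3}) that $Y^{l,n}$ lies \emph{above} the solution obtained by dropping it---which is your lower bound $\underline{Y}^l$---but it makes an $n$-free \emph{upper} comparison impossible, since no fixed Lipschitz generator can dominate $f^l(s,y,z)+n(y^l-S^l_s)^-$ uniformly in $n$ on the region $\{y^l<S^l_s\}$. The upper bound genuinely requires the shift-by-$I^l$ It\^o computation; your closing remark that the penalization ``can be bounded by $I^l$ (via (A3)) for the upper one, exactly as in the one-dimensional theory'' is describing precisely this It\^o trick, not a comparison argument. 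So the sandwich you propose collapses to your alternative route in any case.
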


\begin{proof}
For any $r>0$, set $\tilde{Y}^{l,n}_{t}=|\bar{Y}^{l,n}_{t}|^{2}$, $\bar
{Y}^{l,n}_{t}=Y_{t}^{l,n}-I^{l}_{t}$ and $\bar{Z}^{l,n}_{t}=Z^{l,n}_{t}%
-\sigma^{l}(t)$. Note that for each $t\in[0,T]$, $(Y^{l,n}_{t}-I^{l}%
_{t})(Y^{l,n}_{t}-S^{l}_{t})^{-}\leq0$. Applying It\^{o}'s formula to
$\tilde{Y}_{t}^{\alpha/2}e^{rt}$ yields that
\[%
\begin{split}
&  \quad(\tilde{Y}_{t}^{l,n})^{\alpha/2}e^{rt}+\int_{t}^{T} re^{rs}(\tilde
{Y}_{s}^{l,n})^{\alpha/2}ds+\int_{t}^{T} \frac{\alpha}{2} e^{rs} (\tilde
{Y}_{s}^{l,n})^{\alpha/2-1}(\bar{Z}^{l,n}_{s})^{2}d\langle B\rangle_{s}\\
&  =|\xi^{l}|^{\alpha}e^{rT}+\alpha(1-\frac{\alpha}{2})\int_{t}^{T}%
e^{rs}(\tilde{Y}_{s}^{l,n})^{\alpha/2-2}(\bar{Y}^{l,n}_{s})^{2}(\bar{Z}%
^{l,n}_{s})^{2}d\langle B\rangle_{s}\\
&  \quad+\int_{t}^{T}{\alpha} e^{rs}(\tilde{Y}_{s}^{l,n})^{\alpha/2-1}\bar
{Y}^{l,n}_{s}(f^{(l,n)}_{s}+b^{l}(s))ds +\int_{t}^{T}\alpha e^{rs}(\tilde
{Y}_{s}^{l,n})^{\alpha/2-1}\bar{Y}^{l,n}_{s}dL_{s}^{l,n}\\
&  \quad- \int_{t}^{T}\alpha e^{rs}(\tilde{Y}_{s}^{l,n})^{\alpha/2-1}(\bar
{Y}^{l,n}_{s}\bar{Z}^{l,n}_{s}dB_{s}+\bar{Y}^{l,n}_{s}dK_{s}^{l,n}-\bar
{Y}^{l,n}_{s}dK^{I,l}_s)\\
&  \leq|\xi^{l}|^{\alpha}e^{rT}+ \alpha(1-\frac{\alpha}{2})\int_{t}^{T}%
e^{rs}(\tilde{Y}_{s}^{l,n})^{\alpha/2-2}(\bar{Y}^{l,n}_{s})^{2}(\bar{Z}%
^{l,n}_{s})^{2}d\langle B\rangle_{s}\\
&  \quad+\int_{t}^{T}{\alpha} e^{rs}(\tilde{Y}_{s}^{l,n})^{\alpha
/2-1/2}(|f^{(l,n)}_{s}|+b^{l}(s))ds-(M^{l,n}_{T}-M^{l,n}_{t}),
\end{split}
\]
where $f^{(l,n)}_{s}=f^{l}(s,\bar{Y}^{1,n}_{s}+I_{s}^{1},\cdots,\bar{Y}%
^{k,n}_{s}+I_{s}^{k},\bar{Z}^{l,n}_{s}+\sigma^{l}(s))$ and
\[
M^{l,n}_{t}=\int_{t}^{T}\alpha e^{rs}(\bar{Y}^{l,n}_{s})^{\alpha/2-1}(\bar
{Y}^{l,n}_{s}\bar{Z}^{l,n}_{s}dB_{s}+(\bar{Y}^{l,n}_{s})^{+}dK_{s}^{l,n}%
+(\bar{Y}^{l,n}_{s})^{-}dK^{I,l}_{s})
\]
is a $G$-martingale. By the assumption on $f^{l}$ and the Young inequality, we
have
\begin{equation}
\label{e2}%
\begin{split}
&  \int_{t}^{T}{\alpha} e^{rs}(\tilde{Y}^{l,n}_{s})^{\frac{\alpha-1}{2}%
}(|f_{s}^{(l,n)}|+b^{l}(s))ds\\
\leq &  \int_{t}^{T} {\alpha} e^{rs}(\tilde{Y}^{l,n}_{s})^{\frac{\alpha-1}{2}%
}(|f^{l}(s,\mathbf{0},0)|+L(|\bar{Z}^{l,n}_{s}|+|\sigma^{l}(s)|+\sum_{j=1}%
^{k}|\bar{Y}^{j,n}_{s}|+\sum_{j=1}^{k}|I^{j}_{s}|)+b^{l}(s))\\
\leq &  \int_{t}^{T} e^{rs}(|f^{l}(s,\mathbf{0},0)|^{\alpha}+|b^{l}%
(s)|^{\alpha}+L^{\alpha}(|\sigma^{l}(s)|^{\alpha}+\sum_{j=1,j\neq l}^{k}%
|\bar{Y}^{j,n}_{s}|^{\alpha}+\sum_{j=1}^{k}|I^{j}_{s}|^{\alpha})) ds\\
&  +\frac{\alpha(\alpha-1)}{4}\int_{t}^{T}e^{rs}(\tilde{Y}^{l,n}_{s}%
)^{\alpha/2-1}(\bar{Z}^{l,n}_{s})^{2}d\langle B\rangle_{s} +C(\alpha
,k,L,\underline{\sigma})\int_{t}^{T} e^{rs}(\tilde{Y}^{l,n}_{s})^{\alpha/2}ds.
\end{split}
\end{equation}
where $C(\alpha,k,L,\underline{\sigma})=2(k+1)(\alpha-1)+\alpha L+\frac{\alpha
L^{2}}{\underline{\sigma}^{2}(\alpha-1)}$. Set $r=C(\alpha
,k,L,\underline{\sigma})+1$. We obtain that
\begin{align*}
&  |\bar{Y}^{l,n}_{t}|^{\alpha}e^{rt}+M^{l,n}_{T}-M^{l,n}_{t}\\
\leq &  |\xi^{l}|^{\alpha}e^{rT}+\int_{t}^{T} e^{rs}(|f^{l}(s,\mathbf{0}%
,0)|^{\alpha}+|b^{l}(s)|^{\alpha}+L^{\alpha}(|\sigma^{l}(s)|^{\alpha}%
+\sum_{j=1,j\neq l}^{k}|\bar{Y}^{j,n}_{s}|^{\alpha}+\sum_{j=1}^{k}|I^{j}%
_{s}|^{\alpha})) ds\\
\leq &  C\{|\xi|^{\alpha}+\int_{t}^{T} (|f(s,\mathbf{0},0)|^{\alpha
}+|b(s)|^{\alpha}+|\sigma(s)|^{\alpha}+|\bar{Y}^{n}_{s}|^{\alpha}%
+|I_{s}|^{\alpha}) ds\}.
\end{align*}
Taking conditional expectations on both sides implies that
\[
|\bar{Y}^{l,n}_{t}|^{\alpha}\leq C\hat{\mathbb{E}}_{t}[|\xi|^{\alpha}+\int%
_{t}^{T} (|f(s,\mathbf{0},0)|^{\alpha}+|b(s)|^{\alpha}+|\sigma(s)|^{\alpha
}+|\bar{Y}^{n}_{s}|^{\alpha}+|I_{s}|^{\alpha}) ds].
\]
Summing up over $l$, we have
\begin{equation}
\label{e1}|\bar{Y}^{n}_{t}|^{\alpha}\leq C\hat{\mathbb{E}}_{t}[|\xi|^{\alpha
}+\int_{t}^{T} (|f(s,\mathbf{0},0)|^{\alpha}+|b(s)|^{\alpha}+|\sigma
(s)|^{\alpha}+|\bar{Y}^{n}_{s}|^{\alpha}+|I_{s}|^{\alpha}) ds].
\end{equation}
Taking expectations on both sides indicates that
\[
\hat{\mathbb{E}}[|\bar{Y}^{n}_{t}|^{\alpha}]\leq C\hat{\mathbb{E}}%
[|\xi|^{\alpha}+\int_{t}^{T} (|f(s,\mathbf{0},0)|^{\alpha}+|b(s)|^{\alpha
}+|\sigma(s)|^{\alpha}+|I_{s}|^{\alpha}) ds]+C\int_{t}^{T} \hat{\mathbb{E}%
}[|\bar{Y}^{n}_{s}|^{\alpha}]ds.
\]
Applying the Gronwall inequality, we get
\[
\hat{\mathbb{E}}[|\bar{Y}^{n}_{t}|^{\alpha}]\leq C\hat{\mathbb{E}}%
[|\xi|^{\alpha}+\int_{0}^{T} (|f(s,\mathbf{0},0)|^{\alpha}+|b(s)|^{\alpha
}+|\sigma(s)|^{\alpha}+|I_{s}|^{\alpha}) ds].
\]
Recalling Equation \eqref{e1} and Theorem \ref{the1.2}, there exists a
constant $C$ independent of $n$ such that for any $2\leq\alpha<\beta$,
$\hat{\mathbb{E}}[\sup_{t\in[0,T]}|\bar{Y}^{n}_{t}|^{\alpha}]\leq C$.
Consequently, $\hat{\mathbb{E}}[\sup_{t\in[0,T]}|{Y}^{n}_{t}|^{\alpha}]\leq
C$. By Proposition \ref{the1.6}, we have
\begin{align*}
\hat{\mathbb{E}}[(\int_{0}^{T} |Z^{l,n}_{s}|^{2}ds)^{\frac{\alpha}{2}}]  &
\leq C\{\hat{\mathbb{E}}[\sup_{s\in[0,T]}|Y^{l,n}_{s}|^{\alpha}]
+(\hat{\mathbb{E}}[\sup_{s\in[0,T]}|Y^{l,n}_{s}|^{\alpha}])^{1/2}%
(\hat{\mathbb{E}}[(\int_{0}^{T} |f^{(l,n)}(s,0,0)| ds)^{\alpha}])^{1/2}\},\\
\hat{\mathbb{E}}[|L^{l,n}_{T}-K^{l,n}_{T}|^{\alpha}]  &  \leq C\{\hat
{\mathbb{E}}[\sup_{s\in[0,T]}|Y^{l,n}_{s}|^{\alpha}]+\hat{\mathbb{E}}%
[(\int_{0}^{T} |f^{(l,n)}(s,0,0)| ds)^{\alpha}]\},
\end{align*}
where $f^{(l,n)}(s,0,0)=f^{l}(s,Y^{1,n}_{s},\cdots,Y^{l-1,n}_{s}%
,0,Y^{l+1,n}_{s},\cdots,Y^{k,n}_{s},0)$. Therefore, we get the desired uniform
estimates for $K^{l,n}$, $L^{l,n}$ and $Z^{l,n}$, respectively.
\end{proof}

Based on the uniform estimates obtained in Lemma \ref{est-YZKL}, by a similar analysis as the proof of Lemma 4.4 in \cite{LS}, we can obtain the following convergence property for $(Y^{l,n}-S^l)^-$, $1\leq l\leq k$. The only difference is in (12) of \cite{LS}. In our cases, $U=+\infty$, $f^{\varepsilon,n}(s)=f^l(s,Y_s^n,0)+m^{\varepsilon,n}_s$ and $Y^n$ here is $k$-dimensional while it is $1$-dimensional in \cite{LS}.
\begin{lemma}
\label{conv-Y-S} For any $2\leq\alpha<\beta$ and $1\leq l\leq k$,  we have
\[
\lim_{n\rightarrow\infty}\hat{\mathbb{E}}[\sup_{t\in[0,T]}|({Y}^{l,n}%
_{t}-S^{l}_{t})^{-}|^{\alpha}]=0.
\]

\end{lemma}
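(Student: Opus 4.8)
\textbf{Proof proposal for Lemma \ref{conv-Y-S}.}

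The plan is to follow the strategy of the one-dimensional case in \cite{LPSH}: compare each penalized component $Y^{l,n}$ from above by a reflected $G$-BSDE whose solution dominates it uniformly, so that $(Y^{l,n}-S^{l})^{-}$ is squeezed between $0$ and the gap of that auxiliary solution, and then show the gap vanishes. Concretely, fix $l$ and introduce, for each $n$, the one-dimensional reflected $G$-BSDE with lower obstacle $S^{l}$, terminal value $\xi^{l}$, and generator
\[
\bar f^{l,n}(s,y,z):=f^{l}(s,Y^{1,n}_{s},\cdots,Y^{l-1,n}_{s},y,Y^{l+1,n}_{s},\cdots,Y^{k,n}_{s},z).
\]
Because the other components $Y^{j,n}$ are already constructed and lie in $S_{G}^{\alpha}$ with the uniform bound from Lemma \ref{est-YZKL}, and because $f^{l}$ is Lipschitz, $\bar f^{l,n}$ satisfies \textsc{(H1)}--\textsc{(H2)} with constants independent of $n$; together with \textsc{(A3)}--\textsc{(A4)} for $S^{l},\xi^{l}$, Theorem \ref{the1.14} gives a unique solution $(\bar Y^{l,n},\bar Z^{l,n},\bar A^{l,n})$. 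Viewing \eqref{penal 1} as a (non-reflected) $G$-BSDE with the extra nondecreasing term $L^{l,n}$ and comparing it to the reflected equation, a penalization comparison — exactly as in the one-dimensional proof — yields $Y^{l,n}_{t}\le \bar Y^{l,n}_{t}$ for all $t$, q.s., and moreover (again as in \cite{LPSH}) $\bar Y^{l,n}_{t}$ decreases in $n$ toward a limit $\bar Y^{l}_{t}$ that, being trapped between $S^{l}$ and the auxiliary data, satisfies the obstacle constraint, so that its ``$(\cdot-S^{l})^{-}$'' is zero.

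The key quantitative step is then to bound $\hat{\mathbb{E}}[\sup_{t}|(Y^{l,n}_{t}-S^{l}_{t})^{-}|^{\alpha}]$ by $\hat{\mathbb{E}}[\sup_{t}|\bar Y^{l,n}_{t}-Y^{l,n}_{t}|^{\alpha}]$ (since $S^{l}\le \bar Y^{l,n}$ forces $(Y^{l,n}-S^{l})^{-}\le \bar Y^{l,n}-Y^{l,n}$ whenever the left side is positive), and to show the latter tends to $0$. For this I would run the standard It\^o estimate on the difference $\bar Y^{l,n}-Y^{l,n}$, which solves a linear-type $G$-BSDE driven by the increments $dL^{l,n}$ and $d\bar A^{l,n}$ against the ``distance to obstacle'' quantities; the point, as in the one-dimensional argument, is that
\[
\hat{\mathbb{E}}\Big[\Big(\int_0^T n (Y^{l,n}_s-S^l_s)^- \, ds\Big)^{\alpha}\Big]=\hat{\mathbb{E}}[|L^{l,n}_T|^{\alpha}]\le C
\]
is uniformly bounded (Lemma \ref{est-YZKL}), so $n^{-1}$ times a bounded quantity forces $(Y^{l,n}-S^{l})^{-}\to 0$ in the appropriate norm. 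One extracts from $\sup_n\hat{\mathbb{E}}[|L^{l,n}_T|^{\alpha'}]<\infty$ for some $\alpha'>\alpha$ (available since $2\le\alpha<\beta$) a uniform integrability that upgrades $L^1$-type convergence to $L^{\alpha}$, and a Doob-type maximal inequality under $G$-expectation (Theorem \ref{the1.2}) to pass from $\hat{\mathbb{E}}[|(Y^{l,n}_t-S^l_t)^-|^\alpha]\to0$ pointwise in $t$ to convergence of the supremum.

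I expect the main obstacle to be the same one the authors flag in the surrounding text: in the multi-dimensional setting the generator of the auxiliary reflected equation for the $l$-th component depends on the \emph{other} penalized components $Y^{j,n}$, $j\ne l$, so the auxiliary data themselves vary with $n$ and one cannot simply quote the one-dimensional monotone-limit argument verbatim. The fix is to use the uniform (in $n$) estimates of Lemma \ref{est-YZKL} to control these coefficients in $M_G^{\alpha}$ uniformly, and to carry the dependence through Proposition \ref{the1.6} and the difference estimate so that all constants remain $n$-independent; then the decay of $(Y^{l,n}-S^{l})^{-}$ is driven purely by the factor $n$ in $L^{l,n}$ exactly as before. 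A secondary technical care is ensuring the comparison $Y^{l,n}\le \bar Y^{l,n}$ is legitimate despite $Y^{l,n}$ solving a $k$-dimensional system: here one freezes the other components and invokes the one-dimensional comparison of \cite{LPSH} (or Theorem \ref{Myth2-3} with the frozen-coordinate generator), which is valid because the diagonal structure makes the $l$-th equation, given the others, a genuine one-dimensional (reflected vs.\ penalized) pair.
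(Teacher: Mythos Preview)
The paper itself omits the proof, simply stating it is ``similar with Lemma 4.3 in \cite{LPSH}''; so the intended argument is to run the one-dimensional proof from \cite{LPSH} componentwise, with the only adaptation being that the generator $f^{l}(s,Y^{n}_{s},z)$ now carries the other components $Y^{j,n}$, $j\ne l$, which are controlled uniformly by Lemma \ref{est-YZKL}. Your proposal is therefore aligned with the paper's approach and correctly isolates the key input: the uniform bound $\hat{\mathbb{E}}[|L^{l,n}_{T}|^{\alpha}]\le C$ forces $\int_{0}^{T}(Y^{l,n}_{s}-S^{l}_{s})^{-}\,ds\to 0$, and then an It\^o-type estimate plus Theorem \ref{the1.2} upgrades this to sup-norm convergence.

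Two points deserve tightening. First, the auxiliary reflected $G$-BSDE $(\bar Y^{l,n},\bar Z^{l,n},\bar A^{l,n})$ is an unnecessary detour, and your claim that ``$\bar Y^{l,n}_{t}$ decreases in $n$ toward a limit'' is not justified: the frozen coefficients $Y^{j,n}$, $j\ne l$, need not be monotone in $n$ (the multi-dimensional comparison Theorem \ref{Myth2-3} would require $f^{l}$ to be nondecreasing in $y^{j}$ for $j\ne l$, which is not assumed). Fortunately this monotonicity is not needed; in \cite{LPSH} one works directly with $(Y^{l,n}-I^{l})^{-}$ (using $S^{l}\le I^{l}$ from (A3)) and applies It\^o's formula to obtain a conditional bound of the form $|(Y^{l,n}_{t}-I^{l}_{t})^{-}|^{\alpha}\le C\hat{\mathbb{E}}_{t}[\cdots]$ with a right-hand side that vanishes as $n\to\infty$. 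Second, your invocation of Theorem \ref{the1.2} to pass from pointwise to supremum convergence is correct in spirit but requires exactly such a conditional-expectation bound; it does not apply to an arbitrary process, so the It\^o step is essential rather than optional. Once you drop the auxiliary construction and write the conditional estimate explicitly, your argument matches the one the paper is deferring to.
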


\begin{remark}
\upshape{Actually, Lemma 4.3 in \cite{LPSH} also indicates the uniform convergence property similar as in Lemma \ref{conv-Y-S} for the $1$-dimensional case. However, the proof needs the comparison theorem for $G$-BSDEs. In the multi-dimensional case, since we do not assume that the generator $f$ satisfy the monotonicity property (i) as in Theorem \ref{Myth2-3}, $Y^{l,n}$ may not increasing in $n$. Therefore, the method used in proving Lemma 4.3 in \cite{LPSH} cannot be applied to the multi-dimensional case. }
\end{remark}

We then show that $\{Y^n\}_{n\in\mathbb{N}}$ is a Cauchy sequence in $S_G^\alpha(0,T)$.
\begin{lemma}
\label{conv-Y} For any $2\leq\alpha<\beta$, we have
\[
\lim_{n,m\rightarrow\infty}\hat{\mathbb{E}}[\sup_{t\in[0,T]}|Y_{t}^{n}%
-Y_{t}^{m}|^{\alpha}]=0.
%, \quad \forall t\in[0,T].
\]

\end{lemma}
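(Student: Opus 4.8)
The plan is to show that $(Y^n)_{n\geq 1}$ is a Cauchy sequence in $S_G^\alpha(0,T;\mathbb{R}^k)$ by combining the penalized BSDEs for indices $n$ and $m$ and running an It\^o-type estimate on $|Y_t^n - Y_t^m|^\alpha$, exactly as in the one-dimensional argument of \cite{LPSH} but keeping careful track of the cross terms coming from $Y^{j,n}$, $j\neq l$, in the generator. First I would fix $n,m$ and, for each $l$, write the equation for the difference $\hat Y^{l}_t := Y^{l,n}_t - Y^{l,m}_t$, with driver difference controlled by the Lipschitz assumption (A2) as $L(|Y_s^n - Y_s^m| + |\hat Z^l_s|)$, plus the penalization discrepancy $n(Y_s^{l,n}-S_s^l)^- - m(Y_s^{l,m}-S_s^l)^-$. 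Applying It\^o's formula to $(|\hat Y^{l}_t|^2)^{\alpha/2}e^{rt}$ and choosing $r$ large enough to absorb the Lipschitz-in-$y$ and Young-inequality terms (as in Lemma \ref{est-YZKL}), the dangerous terms are the ones involving the increasing processes $L^{l,n}$, $L^{l,m}$.

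The key observation — which is what makes the penalization scheme work — is the standard bound
\[
\int_0^T (Y_s^{l,n}-Y_s^{l,m})\, dL_s^{l,n} \leq \int_0^T (Y_s^{l,n}-S_s^l)^- \, dL_s^{l,m} \leq \Big(\sup_{s\in[0,T]}(Y_s^{l,n}-S_s^l)^-\Big)\, L_T^{l,m},
\]
using that $dL^{l,n}$ charges only the set $\{Y^{l,n}=S^l\}$ where $Y^{l,n}-S^l \leq 0 \leq -(Y^{l,m}-S^l)^-$ up to sign; symmetrically for the $dL^{l,m}$ term. Together with the uniform estimates $\hat{\mathbb{E}}[|L_T^{l,n}|^\alpha]\leq C$ from Lemma \ref{est-YZKL} and the crucial convergence $\hat{\mathbb{E}}[\sup_t |(Y_t^{l,n}-S_t^l)^-|^\alpha]\to 0$ from Lemma \ref{conv-Y-S}, an application of H\"older's inequality shows these cross terms tend to zero as $n,m\to\infty$. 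The martingale part is removed by taking $\hat{\mathbb{E}}_t[\cdot]$, and one first obtains a pointwise bound on $\hat{\mathbb{E}}_t[|\hat Y^l_s|^\alpha]$; summing over $l$, applying Gronwall, and then invoking Theorem \ref{the1.2} (the $G$-Doob inequality) to pass from $\hat{\mathbb{E}}[|\hat Y_t^n - \hat Y_t^m|^\alpha]$ at fixed $t$ to $\hat{\mathbb{E}}[\sup_t |Y_t^n-Y_t^m|^\alpha]$ yields the claim. One must take a slightly smaller exponent in the intermediate step (work with some $2\leq\alpha'<\beta$ and some extra integrability to feed Theorem \ref{the1.2}), which is why the statement is only for $\alpha<\beta$.

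The main obstacle I expect is precisely the handling of the $dL$ cross terms in the multi-dimensional setting: unlike the one-dimensional case, when I apply It\^o to $(|\hat Y^l|^2)^{\alpha/2}$ the coefficient multiplying $dL_s^{l,n}$ is $\alpha(|\hat Y^l_s|^2)^{\alpha/2-1}\hat Y^l_s$ rather than simply $\hat Y^l_s$, so I need the sign argument to survive this reweighting — which it does, since $(|\hat Y^l_s|^2)^{\alpha/2-1}\geq 0$ and the sign of $\hat Y^l_s$ on $\operatorname{supp}dL^{l,n}$ is still what matters — and then I must bound the resulting expression by $(\sup_s|(Y^{l,n}_s-S^l_s)^-|)^{\alpha-1}$ times something controlled, which requires splitting off $|\hat Y^l|^{\alpha-1}\leq C(|Y^{l,n}-S^l|^{\alpha-1} + |Y^{l,m}-S^l|^{\alpha-1})$ on that support and using the uniform $S_G^\alpha$ bounds together with Lemma \ref{conv-Y-S}. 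The remaining terms (Lipschitz coupling across components, $Z$-terms, the non-increasing $G$-martingales $K^{l,n}$, $K^{l,m}$) are routine given Propositions \ref{the1.6}--\ref{the1.7} and the estimates already established, so the proof reduces to carefully organizing these ingredients.
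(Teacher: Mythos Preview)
Your plan is the paper's proof: It\^o on $(|\hat Y^l|^2)^{\alpha/2}e^{rt}$, absorb the Lipschitz coupling by choosing $r$ large, push the $K$-terms into a $G$-martingale, treat the $dL$ cross terms via a sign argument, sum over $l$, take conditional expectation and apply Gronwall, then upgrade to the sup-norm via Theorem~\ref{the1.2}. Two points need correction, one cosmetic and one substantive.

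First, the support statement is off: since $dL^{l,n}_s = n(Y^{l,n}_s-S^l_s)^-\,ds$, the measure charges $\{Y^{l,n}<S^l\}$, not $\{Y^{l,n}=S^l\}$ (that would be the Skorohod condition for the \emph{limit}, not for the penalized equation). The sign argument nevertheless goes through because $(Y^{l,n}_s-S^l_s)(Y^{l,n}_s-S^l_s)^-\leq 0$ always, and one obtains
\[
|\hat Y^l_s|^{\alpha-2}\hat Y^l_s\, d\hat L^l_s \;\leq\; (n+m)\,|\hat Y^l_s|^{\alpha-2}(Y^{l,n}_s-S^l_s)^-(Y^{l,m}_s-S^l_s)^-\,ds.
\]
(Your displayed ``key observation'' also has the indices on $dL$ and $(Y-S)^-$ swapped.)

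Second, and this is the real gap, your proposed handling of the weight by the triangle inequality $|\hat Y^l|^{\alpha-1}\leq C(|Y^{l,n}-S^l|^{\alpha-1}+|Y^{l,m}-S^l|^{\alpha-1})$ on the support does \emph{not} close the estimate: the contribution $|Y^{l,m}-S^l|^{\alpha-1}\,dL^{l,n}$ is bounded by $\big(\sup_s|Y^{l,m}_s-S^l_s|\big)^{\alpha-1}L^{l,n}_T$, and neither factor tends to zero. What works instead (and what the paper does) is to leave the weight $|\hat Y^l|^{\alpha-2}$ alone and control it by $\sup_s|\hat Y_s|^{\alpha-2}$, which is uniformly bounded in $L^{\alpha\gamma/(\alpha-2)}$ by Lemma~\ref{est-YZKL}; in the remaining product $(Y^{l,n}-S^l)^-(Y^{l,m}-S^l)^-$, one factor provides smallness via Lemma~\ref{conv-Y-S} while the other is absorbed into $L^{l,\cdot}_T$. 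A three-term H\"older inequality with exponents $\tfrac{\alpha}{\alpha-2},\alpha,\alpha$ (applied at the level of $\gamma$-th moments, $1\le\gamma<\beta/\alpha$) then gives the vanishing. With this fix to the cross-term step, your outline is complete and coincides with the paper's argument.
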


\begin{proof}
For any $r>0$, set $\hat{Y}^{l}_{t}=Y_{t}^{l,n}-Y_{t}^{l,m}, \hat{Z}^{l}%
_{t}=Z_{t}^{l,n}-Z_{t}^{l,m}, \hat{K}^{l}_{t}=K_{t}^{l,n}-K_{t}^{l,m}, \hat
{L}^{l}_{t}=L_{t}^{l,n}-L_{t}^{l,m}$, $\bar{Y}^{l}_{t}=|\hat{Y}^{l}_{t}|^{2}$
and $\hat{f}^{l}_{t}=f^{l}(t,Y_{t}^{n},Z_{t}^{l,n})-f^{l}(t,Y_{t}^{m}%
,Z_{t}^{l,m})$. By applying It\^{o}'s formula to $|\bar{Y}_{t}^{l}|^{\alpha
/2}e^{rt}$, we get
%+\varepsilon_\alpha $\varepsilon_\alpha=\varepsilon(1-\alpha/2)^+$  \varepsilon_\alpha e^{rT}+\varepsilon_\alpha e^{rT}+%
\[%
\begin{split}
&  \quad|\bar{Y}_{t}^{l}|^{\alpha/2}e^{rt}+\int_{t}^{T} re^{rs}|\bar{Y}%
_{s}^{l}|^{\alpha/2}ds+\int_{t}^{T} \frac{\alpha}{2} e^{rs} |\bar{Y}_{s}%
^{l}|^{\alpha/2-1}(\hat{Z}^{l}_{s})^{2}d\langle B\rangle_{s}\\
&  = \alpha(1-\frac{\alpha}{2})\int_{t}^{T}e^{rs}|\bar{Y}_{s}^{l}%
|^{\alpha/2-2}(\hat{Y}^{l}_{s})^{2}(\hat{Z}^{l}_{s})^{2}d\langle B\rangle_{s}
+\int_{t}^{T}\alpha e^{rs}|\bar{Y}_{s}^{l}|^{\alpha/2-1}\hat{Y}^{l}_{s}%
d\hat{L}^{l}_{s}\\
&  \quad+\int_{t}^{T}{\alpha} e^{rs}|\bar{Y}_{s}^{l}|^{\alpha/2-1}\hat{Y}%
_{s}^{l}\hat{f}^{l}_{s}ds-\int_{t}^{T}\alpha e^{rs}|\bar{Y}_{s}^{l}%
|^{\alpha/2-1}(\hat{Y}^{l}_{s}\hat{Z}^{l}_{s}dB_{s}+\hat{Y}^{l}_{s}d\hat
{K}^{l}_{s})\\
&  \leq\alpha(1-\frac{\alpha}{2})\int_{t}^{T}e^{rs}|\bar{Y}_{s}^{l}%
|^{\alpha/2-2}(\hat{Y}^{l}_{s})^{2}(\hat{Z}^{l}_{s})^{2}d\langle B\rangle
_{s}+\int_{t}^{T}{\alpha} e^{rs}|\bar{Y}_{s}^{l}|^{\frac{\alpha-1}{2}}|\hat
{f}^{l}_{s}|ds-(M_{T}-M_{t})\\
&  \quad-\int_{t}^{T}\alpha e^{rs}|\bar{Y}_{s}^{l}|^{\alpha/2-1}(Y_{s}%
^{l,n}-S^{l}_{s})dL^{l,m}_{s}-\int_{t}^{T}\alpha e^{rs}|\bar{Y}_{s}%
^{l}|^{\alpha/2-1}(Y_{s}^{l,m}-S^{l}_{s})dL^{l,n}_{s},
\end{split}
\]
where $M_{t}=\int_{0}^{t} \alpha e^{rs}|\bar{Y}_{s}^{l}|^{\alpha/2-1}(\hat
{Y}^{l}_{s}\hat{Z}^{l}_{s}dB_{s}+(\hat{Y}^{l}_{s})^{+}dK_{s}^{l,m}+(\hat
{Y}^{l}_{s})^{-}dK_{s}^{l,n})$ is a $G$-martingale. Similar to \eqref{e2}, we
have
\[%
\begin{split}
\int_{t}^{T}{\alpha} e^{rs}|\bar{Y}_{s}^{l}|^{\frac{\alpha-1}{2}}|\hat{f}%
^{l}_{s}|ds \leq &  \int_{t}^{T} e^{rs}\sum_{j=1,j\neq l}^{k}|\hat{Y}^{j}%
_{s}|^{\alpha}ds +\frac{\alpha(\alpha-1)}{4}\int_{t}^{T}e^{rs}|\bar{Y}_{s}%
^{l}|^{\alpha/2-1}(\hat{Z}^{l}_{s})^{2}d\langle B\rangle_{s}\\
&  +((k-1)(\alpha-1)+\alpha L+\frac{\alpha L^{2}}{\underline{\sigma}%
^{2}(\alpha-1)})\int_{t}^{T} e^{rs}|\bar{Y}_{s}^{l}|^{\alpha/2}ds.
\end{split}
\]
Let $r=1+(k-1)(\alpha-1)+\alpha L+\frac{\alpha L^{2}}{\underline{\sigma}%
^{2}(\alpha-1)}$. By the above analysis, we have
\begin{align*}
|\bar{Y}_{t}^{l}|^{\alpha/2}e^{rt}+(M_{T}-M_{t})\leq &  -\int_{t}^{T}\alpha
e^{rs}|\bar{Y}_{s}^{l}|^{\alpha/2-1}(Y_{s}^{l,n}-S^{l}_{s})dL^{l,m}_{s}%
+\int_{t}^{T} e^{rs}\sum_{j=1,j\neq l}^{k}|\hat{Y}^{j}_{s}|^{\alpha}ds\\
&  -\int_{t}^{T}\alpha e^{rs}|\bar{Y}_{s}^{l}|^{\alpha/2-1}(Y_{s}^{l,m}%
-S^{l}_{s})dL^{l,n}_{s}.
\end{align*}
Taking conditional expectation on both sides of the above inequality, we
conclude that
%\varepsilon_\alpha e^{rT}%
\[%
\begin{split}
|\hat{Y}^{l}_{t}|^{\alpha}\leq &  C \hat{\mathbb{E}}_{t}[\int_{t}%
^{T}(m+n)|\bar{Y}_{s}^{l}|^{\alpha/2-1}(Y_{s}^{l,n}-S^{l}_{s})^{-}(Y_{s}%
^{l,m}-S^{l}_{s})^{-}ds+\int_{t}^{T} \sum_{j=1,j\neq l}^{k}|\hat{Y}^{j}%
_{s}|^{\alpha}ds]\\
\leq &  C\hat{\mathbb{E}}_{t}[\sum_{l=1}^{k}\int_{t}^{T}(m+n)|\bar{Y}%
_{s}|^{\alpha/2-1}(Y_{s}^{l,n}-S^{l}_{s})^{-}(Y_{s}^{l,m}-S^{l}_{s}%
)^{-}ds+\int_{t}^{T} |\hat{Y}_{s}|^{\alpha}ds].
\end{split}
\]
Summing up over $l$ yields
\begin{equation}
\label{e3}|\hat{Y}_{t}|^{\alpha}\leq C\hat{\mathbb{E}}_{t}[\sum_{l=1}^{k}%
\int_{t}^{T}(m+n)|\bar{Y}_{s}|^{\alpha/2-1}(Y_{s}^{l,n}-S^{l}_{s})^{-}%
(Y_{s}^{l,m}-S^{l}_{s})^{-}ds+\int_{t}^{T} |\hat{Y}_{s}|^{\alpha}ds].
\end{equation}
Taking expectations on both sides, we have
\[
\hat{\mathbb{E}}[|\hat{Y}_{t}|^{\alpha}]\leq C\hat{\mathbb{E}}[\sum_{l=1}%
^{k}\int_{0}^{T}(m+n)|\bar{Y}_{s}|^{\alpha/2-1}(Y_{s}^{l,n}-S^{l}_{s}%
)^{-}(Y_{s}^{l,m}-S^{l}_{s})^{-}ds]+C\int_{t}^{T} \hat{\mathbb{E}}[|\hat{Y}%
_{s}|^{\alpha}] ds.
\]
Applying the Gronwall inequality, it follows that
\[
\hat{\mathbb{E}}[|\hat{Y}_{t}|^{\alpha}]\leq C\hat{\mathbb{E}}[\sum_{l=1}%
^{k}\int_{0}^{T}(m+n)|\bar{Y}_{s}|^{\alpha/2-1}(Y_{s}^{l,n}-S^{l}_{s}%
)^{-}(Y_{s}^{l,m}-S^{l}_{s})^{-}ds].
\]
For any $1\leq\gamma<\beta/\alpha$, by the H\"{o}lder inequality, we have for
any $1\leq l\leq k$,
\begin{equation}
\label{e4}%
\begin{split}
&  \hat{\mathbb{E}}[(\int_{0}^{T}(m+n)|\bar{Y}_{s}|^{\alpha/2-1}(Y_{s}%
^{l,n}-S^{l}_{s})^{-}(Y_{s}^{l,m}-S^{l}_{s})^{-}ds)^{\gamma}]\\
\leq &  C\hat{\mathbb{E}}[\sup_{s\in[0,T]}|\hat{Y}_{s}|^{(\alpha-2)\gamma
}\{\sup_{s\in[0,T]}|(Y^{l,n}_{s}-S^{l}_{s})^{-}|^{\gamma}|L^{l,m}_{T}%
|^{\gamma}+\sup_{s\in[0,T]}|(Y^{l,m}_{s}-S^{l}_{s})^{-}|^{\gamma}|L^{l,n}%
_{T}|^{\gamma}\}]\\
\leq &  C(\hat{\mathbb{E}}[\sup_{s\in[0,T]}|\hat{Y}_{s}|^{\alpha\gamma
}])^{\frac{\alpha}{\alpha-2}}\{(\hat{\mathbb{E}}[\sup_{s\in[0,T]}|(Y^{l,n}%
_{s}-S^{l}_{s})^{-}|^{\alpha\gamma}])^{\frac{1}{\alpha}}(\hat{\mathbb{E}%
}[|L_{T}^{l,m}|^{\alpha\gamma}])^{\frac{1}{\alpha}}\\
&  +(\hat{\mathbb{E}}[\sup_{s\in[0,T]}|(Y^{l,m}_{s}-S^{l}_{s})^{-}%
|^{\alpha\gamma}])^{\frac{1}{\alpha}}(\hat{\mathbb{E}}[|L_{T}^{l,n}%
|^{\alpha\gamma}])^{\frac{1}{\alpha}}\},
\end{split}
\end{equation}
which converges to $0$ when $m,n$ go to infinity by Lemma \ref{est-YZKL} and
Lemma \ref{conv-Y-S}. Therefore, for any $2\leq\alpha<\beta$, we have
\begin{equation}
\label{e5}\lim_{n,m\rightarrow\infty}\hat{\mathbb{E}}[|Y^{n}_{t}-Y^{m}%
_{t}|^{\alpha}]=0.
\end{equation}
Combining \eqref{e3}-\eqref{e5} and applying Theorem \ref{the1.2}, we finally
get the desired result.
\end{proof}

Now, we are in a position to prove the main result, i.e., the existence and uniqueness of solutions to multi-dimensional reflected $G$-BSDEs.
\begin{proof}[Proof of Theorem \ref{MainThm}]
The uniqueness for $Y$ is a direct consequence of Proposition \ref{difference of Y}. Note that for each fixed $1\leq l\leq k$, $(Y^l,Z^l,A^l)$ can be seen as the solution to the 1-dimensional reflected $G$-BSDE with parameters $(\xi^l,\tilde{f}^l,S^l)$, where 
\begin{align*}
	\tilde{f}^l(t,y,z)=f^l(t,Y^1_t,\cdots,Y^{l-1}_t,y,Y^{l+1}_t,\cdots,Y^k_t,z).
\end{align*}%By Proposition  \ref{the1.7}, we obtain the uniqueness for $Z^l$
By the uniqueness for 1-dimensional reflected $G$-BSDEs, we obtain the uniqueness for the multi-dimensional case.

For the existence, by Lemma \ref{conv-Y}, there exists some $Y\in S_G^\alpha(0,T)$ such that 
\begin{align*}
	\lim_{n\rightarrow\infty}\hat{\mathbb{E}}[\sup_{t\in[0,T]}|Y_t-Y^n_t|^\alpha]=0.
\end{align*}
Besides, Lemma \ref{conv-Y-S} implies that for any $1\leq l\leq k$,  $Y_t^l\geq S_t^l$, $t\in[0,T]$. It remains to prove $(Z^{l,n},A^{l,n})$ converges to $(Z^{l},A^{l})$ and
	$\{-\int_{0}^{t} (Y^{l}_{s}-S^{l}_{s})dA^{l}_{s}\}_{t\in[0,T]}$ is a
	non-increasing $G$-martingale. The proof is similar with the one of Theorem
	5.1 in \cite{LPSH}, so we omit it.
\end{proof}

As a byproduct of  the penalization construction, we have the following comparison theorem for multi-dimensional
reflected $G$-BSDEs.

\begin{theorem}
\label{Myth4.1} Let $(\xi,f,S)$ and $(\bar
{\xi},\bar{f},\bar{S})$ be
two sets of data. Suppose that the coefficients $f$, $\bar{f}$, $\xi$, $\bar{\xi}$, $S$, $\bar{S}$
satisfy $(A{1})-(A{4}).$ Assume the following conditions hold:%$f^{l}(t,y,z^{l})$, $\bar{f}^{l}(t,\bar{y},z^{l})$

\begin{itemize}
\item[(i)] for each $1\leq l\leq k$, $f^{l}(t,y,z^{l})\geq\bar{f}^{l}%
(t,\bar{y},z^{l})$ if
$z^{l}\in\mathbb{R}$ and $y,\bar{y}\in\mathbb{R}^{k}$ satisfying $y^{j}%
\geq\bar{y}^{j}$ for $j\neq l$ and $y^{l}=\bar{y}^{l}$;

\item[(ii)] $\xi\geq\bar{\xi}$;

\item[(iii)] $S_{t}\geq\bar{S}_{t}$, $0\leq t\leq T$, $q.s.$.
\end{itemize}
Suppose that $(Y,Z,A)$ and $(\bar{Y},\bar{Z},\bar{A})$ are the solutions to the
reflected $G$-BSDE with the above two  sets of parameters, respectively. Then, we have%
\[
Y_{t}\geq\bar{Y}_{t},\quad0\leq t\leq T,\quad q.s.
\]

\end{theorem}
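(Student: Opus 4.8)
The plan is to exploit the penalization construction established in Section 4.1, together with the comparison theorem for multi-dimensional $G$-BSDEs (Theorem \ref{Myth2-3}), which is precisely designed for the diagonal structure. The key observation is that the penalized equations \eqref{penal 1} are genuine multi-dimensional $G$-BSDEs with diagonal generators, so comparison applies at each finite level $n$, and then we pass to the limit.

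First I would set up the two penalization schemes in parallel: for $n\geq 1$ and $1\leq l\leq k$, let $(Y^{l,n},Z^{l,n},K^{l,n})$ solve
\[
Y^{l,n}_{t}=\xi^{l}+\int_{t}^{T} f^{l}(s,Y_{s}^{n},Z_{s}^{l,n})\,ds+n\int_{t}^{T}(Y_{s}^{l,n}-S^{l}_{s})^{-}\,ds-\int_{t}^{T}Z_{s}^{l,n}\,dB_{s}-(K^{l,n}_{T}-K^{l,n}_{t}),
\]
and analogously define $(\bar Y^{l,n},\bar Z^{l,n},\bar K^{l,n})$ with data $(\bar\xi^{l},\bar f^{l},\bar S^{l})$. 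By Lemma \ref{conv-Y} and the discussion following it, $Y^{n}\to Y$ and $\bar Y^{n}\to\bar Y$ in $S_{G}^{\alpha}$. So it suffices to prove $Y^{l,n}_{t}\geq\bar Y^{l,n}_{t}$ q.s.\ for every fixed $n$, and then let $n\to\infty$.

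For the fixed-$n$ comparison, I would verify the hypotheses of Theorem \ref{Myth2-3} applied to the penalized systems viewed as standard multi-dimensional $G$-BSDEs with generators $F^{l}(s,y,z^{l}):=f^{l}(s,y,z^{l})+n(y^{l}-S^{l}_{s})^{-}$ and $\bar F^{l}(s,\bar y,z^{l}):=\bar f^{l}(s,\bar y,z^{l})+n(\bar y^{l}-\bar S^{l}_{s})^{-}$ (with $V^{l}\equiv\bar V^{l}\equiv 0$). The monotonicity condition (i) of Theorem \ref{Myth2-3} requires: whenever $y^{j}\geq\bar y^{j}$ for $j\neq l$ and $y^{l}=\bar y^{l}$, one has $F^{l}(s,y,z^{l})\geq\bar F^{l}(s,\bar y,z^{l})$. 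The $f^{l}$ part is handled by hypothesis (i) of the present theorem. For the penalization part, since $y^{l}=\bar y^{l}$, I need $n(y^{l}-S^{l}_{s})^{-}\geq n(\bar y^{l}-\bar S^{l}_{s})^{-}$; because $S^{l}_{s}\geq\bar S^{l}_{s}$ by hypothesis (iii) and $r\mapsto(a-r)^{-}$ is non-decreasing in $r$, this holds. Condition (ii) of Theorem \ref{Myth2-3} is immediate from $\xi\geq\bar\xi$ and $V^{l}\equiv\bar V^{l}\equiv 0$. Hence Theorem \ref{Myth2-3} yields $Y^{l,n}_{t}\geq\bar Y^{l,n}_{t}$ for all $t$, q.s.

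Finally, I would pass to the limit: since $\sup_{t\in[0,T]}|Y^{n}_{t}-Y_{t}|\to 0$ and $\sup_{t\in[0,T]}|\bar Y^{n}_{t}-\bar Y_{t}|\to 0$ in $L_{G}^{\alpha}$ (hence q.s.\ along a subsequence, using the capacity/quasi-sure convergence framework of Section 2), the inequality $Y^{l,n}_{t}\geq\bar Y^{l,n}_{t}$ is preserved in the limit, giving $Y_{t}\geq\bar Y_{t}$, $0\leq t\leq T$, q.s. The main obstacle I anticipate is a bookkeeping one rather than a deep one: making sure the two penalization schemes both converge to the \emph{unique} solutions identified in Theorem \ref{MainThm} (this uses uniqueness), and that the generators $F^{l},\bar F^{l}$ genuinely satisfy the integrability/Lipschitz assumptions $(S1)$--$(S2)$ uniformly enough to invoke Theorem \ref{Myth2-3} at each level $n$ — the Lipschitz constant of the penalty term is $n$, which is fine for fixed $n$ but must not be used in any estimate that needs to be uniform in $n$ (and it isn't needed here, since we only apply comparison, not the a priori bounds, at each level).
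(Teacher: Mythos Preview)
Your argument is correct and follows the same overall strategy as the paper: invoke the penalization construction of Section~4.1 together with the multi-dimensional comparison Theorem~\ref{Myth2-3}, then pass to the limit. The one notable difference is that the paper penalizes \emph{only} the barred system. Since the reflected solution $Y$ already satisfies $Y^{l}\geq S^{l}$, the penalty $n(Y^{l}_{s}-S^{l}_{s})^{-}$ vanishes identically and may be inserted for free into the equation for $Y^{l}$; the non-decreasing process $A^{l}$ is then placed in the $V^{l}$-slot of Theorem~\ref{Myth2-3} (with $\bar V^{l}\equiv 0$, so $V^{l}-\bar V^{l}=A^{l}$ is non-decreasing), yielding $Y^{l}\geq \bar Y^{l,n}$ directly. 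Only the single limit $\bar Y^{n}\to\bar Y$ is then needed. Your symmetric version---penalize both sides, compare $Y^{n}$ with $\bar Y^{n}$ via the augmented generators $F^{l},\bar F^{l}$, then let $n\to\infty$ on both---is equally valid and arguably more transparent; the paper's asymmetric shortcut saves one appeal to the penalization convergence and illustrates how the increasing part $A$ of a reflected solution can serve as the $V$-term in the non-reflected comparison theorem.
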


\begin{proof}
Consider the following $G$-BSDEs parameterized by $n=1,2,\cdots$,%
\[%
\begin{split}
\bar{Y}_{t}^{l,n}  &  =\bar{\xi}^{l}+\int_{t}^{T}\bar{f}^{l}(s,Y_{s}^{n}%
,Z_{s}^{l,n})ds+n\int_{t}^{T}(\bar{Y}_{s}^{l,n}-\bar{S}_{s}^{l})^{-}ds-\int%
_{t}^{T}\bar{Z}_{s}^{l,n}dB_{s}\\
&  -(\bar{K}_{T}^{l,n}-\bar{K}_{t}^{l,n}),\ \ \ 1\leq l\leq k.
\end{split}
\]
Similar analysis as the proof of Theorem \ref{MainThm}, we can show that
$\lim_{n\rightarrow\infty}\hat{\mathbb{E}}[\sup_{t\in\lbrack0,T]}|\bar{Y}%
_{t}-\bar{Y}_{t}^{n}|^{\alpha}]=0$, where $2\leq\alpha<\beta$. Note
that $(Y,Z,A)$ is the solution of the reflected $G$-BSDE with parameters
$(\xi,f,S)$ and $Y_{t}\geq S_{t}$, $0\leq
t\leq T$. Thus we have
\[
Y_{t}^{l}=\xi^{l}+\int_{t}^{T}f^{l}(s,Y_{s},Z_{s}^{l})ds+\int_{t}^{T}n(Y_{s}^{l}%
-S_{s}^{l})^{-}ds-\int_{t}^{T}Z_{s}^{l}dB_{s}+(A_{T}^{l}-A_{t}^{l}).
\]
According to Theorem \ref{Myth2-3}, we have  $Y^{l}_{t}\geq\bar{Y}%
_{t}^{l,n}$, for each $n\in\mathbb{N}$. Now letting $n\rightarrow\infty$, we
conclude that $Y^{l}_{t}\geq\bar{Y}_{t}^{l}$, as desired.
\end{proof}
\begin{remark}
	\upshape{
To guarantee the comparison theorem to hold for multi-dimensional BSDEs with diagonal generators, the condition (i) in Theorem \ref{Myth2-3} (i.e., the condition (i) in Theorem \ref{Myth4.1}) is usually imposed.  Indeed,
in the linear conditional expectation case, it is necessary and
sufficient for the comparison theorem of multi-dimensional BSDEs with
diagonal generators to hold.

To be more detailed, on the classical probability space,  suppose that the generators $f^{l}(t,y,z^{l})$ and $\bar{f}^{l}(t,\bar
{y},z^{l})$, for
$1\leq l\leq k$, are diagonal and satisfy standard assumptions of BSDEs  (see \cite{PP}). Consider the following two $k$-dimensional BSDEs on $[0,T]$:
\[
Y_{t}^{l}=\xi^{l}+\int_{t}^{T}f^{l}(s,Y_{s},Z_{s}^{l})ds-\int_{t}^{T}Z_{s}%
^{l}dB_{s},
\]
and
\[
\bar{Y}_{t}^{l}=\bar{\xi}^{l}+\int_{t}^{T}\bar{f}^{l}(s,\bar{Y}_{s},\bar
{Z}_{s}^{l})ds-\int_{t}^{T}\bar{Z}_{s}^{l}dB_{s},
\]
\ where $1\leq l\leq k$. Then the following are equivalent:

\begin{itemize}
	\item[(i)] for any stopping time $\tau\leq T,$ $\xi,\bar{\xi}\in L^{2}(\mathcal{F}_{\tau
	},\mathbb{R}^{k})$ such that $\xi\geq\bar{\xi},$ the solotion $(Y,Z)$ and
	$(\bar{Y},\bar{Z})$ to the above two BSDEs, respectively, on $[0,\tau]$
	satsify $Y_{t}\geq\bar{Y}_{t}$, $t\in\lbrack0,\tau]$;
	
	\item[(ii)] for any $z^{l}\in\mathbb{R}$ and $y,\bar{y}\in\mathbb{R}^{k}$
	satisfying $y^{j}\geq\bar{y}^{j}$ for $j\neq l$ and $y^{l}=\bar{y}^{l}$, it
	holds that $f^{l}(t,y,z^{l})\geq\bar{f}^{l}(t,\bar{y},z^{l})$, for $1\leq
	l\leq k$.
\end{itemize}

\noindent It can be proved by a similar argument as  in Theorem 2.2 in \cite{HTW2022} as follows:

\textit{Step 1.} Assume (i) hold. From Theorem 2.1 in \cite{HTW2022}, we know that (i)
is equivalent to: for each  $y,\bar{y}\in\mathbb{R}^{k}$ and
$z,\bar{z}\in\mathbb{R}^{k},$
\begin{equation}
	-4\langle y^{-},f(t,y^{+}+\bar{y},z)-\bar{f}(t,\bar{y},\bar{z})\rangle
	_{\mathbb{R}^{k}}\leq2\sum_{l=1}^{k}I_{\{y^{l}<0\}}|z^{l}-\bar{z}^{l}%
	|^2+C|y^{-}|^2.\label{Eq111}%
\end{equation}
Then for any fixed $l$, we take $\delta_{l}{y}\in\mathbb{R}^{k}$ satisfying
$\delta_{l}{y}\geq0$ and $(\delta_{l}{y})^{l}=0$. We then take, for any
$\varepsilon>0$,
\[
y=\delta_{l}{y}-\varepsilon e^{l}.
\]
Plug this into (\ref{Eq111}) and let $z=\bar{z}$, we get
\[
-4\varepsilon[f^{l}(t,\delta_{l}{y}+\bar{y},\bar{z}^{l})-\bar{f}^{l}(t,\bar
{y},\bar{z}^{l})]\leq C\varepsilon^{2}.
\]
Divide on both sides by $-\varepsilon$, we get
\[
4[f^{l}(t,\delta^{l}{y}+\bar{y},\bar{z}^{l})-\bar{f}^{l}(t,\bar{y},\bar{z}%
^{l}]\geq-C\varepsilon.
\]
Letting $\varepsilon\rightarrow0$, we get (ii).

\textit{Step 2.} Now assume (ii) hold. For each $l,$ we take
$
\delta_{l}{y}^{+}\in\mathbb{R}^{k}%
$
satisfying $(\delta_{l}{y}^{+})^{j}=(y^{+})^{j},j\neq l$ and $(\delta_{l}%
{y}^{+})^{l}=0.$ From (ii),
\begin{align*}
	&  -4\langle y^{-},f(t,y^{+}+\bar{y},z)-\bar{f}(t,\bar{y},\bar{z}%
	)\rangle_{\mathbb{R}^{k}}\\
	&  =\sum_{l=1}^{k}-4(y^{-})^{l}[f^{l}(t,y^{+}+\bar{y},z^{l})-\bar{f}%
	^{l}(t,\bar{y},\bar{z}^{l})]\\
	&  =\sum_{l=1}^{k}-4(y^{-})^{l}[(f^{l}(t,y^{+}+\bar{y},z^{l})-f^{l}%
	(t,\delta_{l}{y}^{+}+\bar{y},\bar{z}^{l}))+(f^{l}(t,\delta_{l}y^{+}+\bar
	{y},\bar{z}^{l})-\bar{f}^{l}(t,\bar{y},\bar{z}^{l}))]\\
	&  \leq\sum_{l=1}^{k}-4(y^{-})^{l}[f^{l}(t,y^{+}+\bar{y},z^{l})-f^{l}%
	(t,\delta_{l}{y}^{+}+\bar{y},\bar{z}^{l})]
\end{align*}
Then applying the Lipschitz assumption, we further obtain
\begin{align*}
	&  -4\langle y^{-},f(t,y^{+}+\bar{y},z)-\bar{f}(t,\bar{y},\bar{z}%
	)\rangle_{\mathbb{R}^{k}}\leq\sum_{l=1}^{k}4C(y^{-})^{l}[|y^{+}-\delta_{l}%
	{y}^{+}|+|z^{l}-\bar{z}^{l}|]\\
	&  \ \ \ \ \ \ \ \ \ \ \ \ \ \ \ \ \ \ \ \ =\sum_{l=1}^{k}4C(y^{-})^{l}%
	[(y^{+})^{l}+|z^{l}-\bar{z}^{l}|]\\
	&  \ \ \ \ \ \ \ \ \ \ \ \ \ \ \ \ \ \ \ \ =\sum_{l=1}^{k}4C(y^{-})^{l}%
	|z^{l}-\bar{z}^{l}|\\
	&  \ \ \ \ \ \ \ \ \ \ \ \ \ \ \ \ \ \ \ \ \leq\sum_{l=1}^{k}[2|z^{l}-\bar
	{z}^{l}|^{2}I_{\{y^{l}<0\}}+C((y^{-})^{l})^{2}]\\
	&  \ \ \ \ \ \ \ \ \ \ \ \ \ \ \ \ \ \ \ \ =2\sum_{l=1}^{k}|z^{l}-\bar{z}%
	^{l}|^{2}I_{\{y^{l}<0\}}+C|y^{-}|^{2}.
\end{align*}
The proof is complete.

Moreover, a counterexample when such a condition is violated can be found in
Example 3.2 in \cite{WX}.}
\end{remark}

\subsection{Picard iteration method}
In this subsection, we will give another proof of Theorem \ref{MainThm}, based
on a fixed point argument. 
We denote by $M_{G}^{\alpha}(a,b;\mathbb{R}^{k})$, $S_{G}^{\alpha}(a,b;\mathbb{R}^{k})$ and $\mathcal{S}_{G}^{\alpha
}(a,b;\mathbb{R}^{k})$  the corresponding spaces for the
stochastic processes defined on the time interval $[a,b]$.

We have the following well-posednesss result on the local solution to
reflected $G$-BSDE \eqref{my1}.

\begin{theorem}
\label{my16} Assume that (A1)-(A4) hold for some $\beta>2$. Then there exists
a constant $0<\delta\leq T$ depending only on $T,G,k,\beta$ and $L$ such that
for any $h\in(0,\delta]$, $t\in\lbrack0,T-h]$ and given $\zeta\in L_{G}%
^{\beta}(\Omega_{t+h};\mathbb{R}^{k})$ such that $\zeta\geq S_{t+h}$, the $G$-BSDE with parameters $(\zeta,f,S)$ %reflection $S$
 on the interval $[t,t+h]$
%\begin{equation}
%Y_{s}^{l}=\zeta^{l}+\int_{s}^{t+h}f^{l}(r,Y_{r},Z_{r}^{l})dr-\int_{s}^{t+h}Z_{r}^{l}dB_{r}+(A_{t+h}^{l}-A_{s}^{l}),\text{ }1\leq l\leq k,
%\label{my1-1}
%\end{equation}
admits a unique solution $(Y,Z,A)\in\mathcal{S}_{G}^{\alpha}(t,t+h;\mathbb{R}%
^{k})$ for each $2\leq\alpha<\beta$. Moreover, $Y\in{M}_{G}^{\beta
}(t,t+h;\mathbb{R}^{k}).$
\end{theorem}

To prove Theorem \ref{my16}, let us consider the following $G$-BSDE with
reflection $S$ on the interval $[t,t+h]$, for any $h\in(0,\delta]$, $t\in\lbrack0,T-h]$:
\begin{equation}\begin{cases}\label{myq4}%
Y_{s}^{U,l}=\zeta^{l}+\int_{s}^{t+h}f^{l,U}(r,Y_{r}^{U,l},Z_{r}^{U,l}%
)dr-\int_{s}^{t+h}Z_{r}^{U,l}dB_{r}+(A_{t+h}^{U,l}-A_{s}^{U,l}), 1\leq
l\leq k, \\
Y^{U,l}_s\geq S^l_s, \ s\in[t,t+h],  1\leq l\leq k,\\ \{-\int_t^s(Y^{U,l}_r-S^l_r)dA^{U,l}_r\}_{s\in[t,t+h]} \textrm{ is a non-increasing $G$-martingale}, 1\leq l\leq k,
\end{cases}\end{equation}
where $U\in{M}_{G}^{\beta}(t,t+h;\mathbb{R}^{k})$, $\zeta\in L_{G}^{\beta
}(\Omega_{t+h};\mathbb{R}^{k})$ and
\[
f^{l,U}(t,y^{l},z^{l})=f^{l}(t,U_{t}^{1},\cdots,U_{t}^{l-1},y^{l},U_{t}%
^{l+1},\cdots,U_{t}^{k},z^{l}).
%:[0,T]\times\Omega_{T}\times\mathbb{R}%
%\times\mathbb{R}\rightarrow\mathbb{R}.
\]
We denote $X^{U}=(X^{U,1},\cdots,X^{U,k})$ for $X=Y,Z,A$.  %{\color{red} I delete a sentence here, since I add the following proof. Moreover, the estimate of $\alpha=\beta$ in Proposition \ref{estimate for Y} is needed  in it.}
\begin{lemma}
\label{myq6}  Given $U\in{M}_{G}^{\beta}(t,t+h;\mathbb{R}^{k})$ and $\zeta\in L_{G}^{\beta
}(\Omega_{t+h};\mathbb{R}^{k})$, for any $2\leq \alpha<\beta$, the reflected $G$-BSDE \eqref{myq4} has
a unique solution $(Y^{U},Z^{U},K^{U})$ in $\mathcal{S}_{G}^{\alpha
}(t,t+h;\mathbb{R}^{k}),$ and moreover, $Y^{U}\in{M}_{G}^{\beta}%
(t,t+h;\mathbb{R}^{k})$.
\end{lemma}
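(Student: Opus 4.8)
\textbf{Proof proposal for Lemma \ref{myq6}.}
The plan is to reduce the $k$-dimensional reflected $G$-BSDE \eqref{myq4} to $k$ decoupled one-dimensional reflected $G$-BSDEs and then invoke the known one-dimensional theory. The essential observation is that, once $U\in M_G^\beta(t,t+h;\mathbb{R}^k)$ is fixed, the system \eqref{myq4} is \emph{not} coupled across the index $l$: in the $l$-th equation the generator $f^{l,U}(r,y^l,z^l)$ depends only on the $l$-th component $(y^l,z^l)$ of the unknown, with the other components frozen as the data $U^j$. Hence for each fixed $1\le l\le k$ we may treat \eqref{myq4} as a scalar reflected $G$-BSDE on $[t,t+h]$ with terminal value $\zeta^l$, obstacle $S^l$, and driver $\tilde f^l(r,\omega,y,z):=f^{l,U}(r,y,z)=f^l(r,U_r^1,\dots,U_r^{l-1},y,U_r^{l+1},\dots,U_r^k,z)$.

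First I would verify that, for each $l$, the scalar data $(\zeta^l,\tilde f^l,S^l)$ satisfy the hypotheses \textsc{(H1)}--\textsc{(H4)} of the one-dimensional theory (Theorem \ref{the1.14}). Conditions \textsc{(H3)} and \textsc{(H4)} are immediate from \textbf{(A3)} and \textbf{(A4)} restricted to the $l$-th coordinate, and from $\zeta^l\in L_G^\beta(\Omega_{t+h})$. The Lipschitz condition \textsc{(H2)} for $\tilde f^l$ in $(y,z)$ follows directly from \textbf{(A2)}, with the same constant $L$. The only point requiring a small argument is \textsc{(H1)}, namely $\tilde f^l(\cdot,\cdot,y,z)\in M_G^\beta(t,t+h)$ for each fixed $(y,z)$: this is obtained by combining \textbf{(A1)} (which gives $f^l(\cdot,\cdot,\bar y,z)\in M_G^\beta$ for each constant $\bar y$) with the Lipschitz estimate from \textbf{(A2)} and the assumption $U\in M_G^\beta(t,t+h;\mathbb{R}^k)$, exactly as in Lemma 3.2 of \cite{Liu-stochastics}; one approximates $U$ by simple processes and uses that $\|\tilde f^l(\cdot,\cdot,y,z)-f^l(\cdot,\cdot,U^1_\cdot,\dots,y,\dots,U^k_\cdot,z)\|$ is controlled by $L\sum_{j\ne l}\|U^j-\cdot\|$. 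Once \textsc{(H1)}--\textsc{(H4)} are checked, Theorem \ref{the1.14} yields, for each $l$ and each $2\le\alpha<\beta$, a unique solution $(Y^{U,l},Z^{U,l},A^{U,l})\in\mathcal{S}_G^\alpha(t,t+h)$ of the $l$-th equation; assembling these $k$ triples gives a solution $(Y^U,Z^U,A^U)\in\mathcal{S}_G^\alpha(t,t+h;\mathbb{R}^k)$ of \eqref{myq4}, and uniqueness in the product space follows coordinatewise from the one-dimensional uniqueness.

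It remains to prove the extra regularity $Y^U\in M_G^\beta(t,t+h;\mathbb{R}^k)$. For this I would apply Proposition \ref{the1.9} (the scalar a priori bound for reflected $G$-BSDEs) to each component: it gives $|Y^{U,l}_s|^\beta\le C\,\hat{\mathbb{E}}_s[\,|\zeta^l|^\beta+\sup_{r\in[s,t+h]}|I^l_r|^\beta+\int_s^{t+h}(|\tilde f^l(r,\mathbf{0},0)|^\beta+|b^l(r)|^\beta+|\sigma^l(r)|^\beta)\,dr\,]$, where $\tilde f^l(r,\mathbf 0,0)=f^l(r,U^1_r,\dots,U^{l-1}_r,0,U^{l+1}_r,\dots,U^k_r,0)$. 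Using \textbf{(A2)} to bound $|\tilde f^l(r,\mathbf 0,0)|\le |f^l(r,\mathbf 0,0)|+L|U_r|$ and then integrating in $r$, the right-hand side is finite in $L_G^1$ because $\zeta\in L_G^\beta$, $I^l,b^l,\sigma^l$ have the integrability in \textbf{(A3)}, $f^l(\cdot,\mathbf 0,0)\in M_G^\beta$ by \textbf{(A1)}, and $U\in M_G^\beta(t,t+h;\mathbb{R}^k)$; hence $\int_t^{t+h}\hat{\mathbb{E}}[|Y^{U,l}_s|^\beta]\,ds<\infty$, i.e. $Y^{U,l}\in M_G^\beta(t,t+h)$, for each $l$.

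The main obstacle is the verification of \textsc{(H1)} for the frozen driver $\tilde f^l$, i.e. showing $\tilde f^l(\cdot,\cdot,y,z)$ genuinely lies in $M_G^\beta$ rather than merely being $M_G^\beta$-integrable in some weaker sense; this is where the substitution of a process $U\in M_G^\beta$ into the (only Lipschitz, not smooth) generator must be handled carefully by a density/approximation argument, as in \cite{Liu-stochastics}. Everything else is a routine coordinatewise application of the one-dimensional results \ref{the1.14} and \ref{the1.9}, together with the Lipschitz assumption \textbf{(A2)}.
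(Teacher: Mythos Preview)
Your proposal is correct and follows essentially the same approach as the paper: the paper's proof of Lemma~\ref{myq6} is simply a reference to Lemma~3.2 of \cite{Liu-stochastics} together with Theorem~\ref{the1.14} and Proposition~\ref{the1.9}, which is precisely the coordinatewise reduction you carry out. Your explicit identification of the only nontrivial point---verifying \textsc{(H1)} for the frozen driver $\tilde f^l$ via the Lipschitz bound and an approximation of $U$ in $M_G^\beta$---is exactly what the citation to \cite{Liu-stochastics} is meant to cover.
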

\begin{proof}
Fix any $l\geq1$. From Lemma 3.3 in \cite{Liu-stochastics}, we know that
	$f^{l,U}(s,y^{l},z^{l})\in M_{G}^{\beta}(t,t+h)$ for each $y^{l}\in
	\mathbb{R},z^{l}\in\mathbb{R}^{d}$.  Then applying Theorem \ref{the1.14}  to the $l$-th
	components of  reflected $G$-BSDE (\ref{myq4}), we  get the unique solution $(Y_{t}%
	^{U,l},Z_{t}^{U,l},A_{t}^{U,l}),$ which constitutes the  unique solution to \eqref{myq4} when $l$ run through $1$ to $k$.
	
	Next we show that $Y^{U,l}\in{M}_{G}^{\beta}(t,t+h)$. By Proposition \ref{the1.14}, we have for all $s\in\lbrack
	t,t+h]$ that
	\[
	|Y_{s}^{U,l}|^{\beta}\leq C\mathbb{\hat{E}}_{s}[|{\zeta^l}|^{\beta}+\int_{t}%
	^{t+h}(|f^{U,l}(r,0,0)|^{\beta}+|b^{I,l}(r)|^{\beta}+|\sigma^{I,l}(r)|^{\beta
	})dr+\sup_{r\in\lbrack t,t+h]}|I^l_{r}|^{\beta}]=:\rho^l_{s},
	\]
	Then following the same steps as those in Lemma 3.2 in \cite{Liu-stochastics}, we
	can derive the desired result.
\end{proof}

Thus, we can define a map $\Gamma:U\rightarrow\Gamma(U)$ from ${M}%
_{G}^{\beta}(t,t+h;\mathbb{R}^{k})$ to ${M}_{G}^{\beta}(t,t+h;\mathbb{R}^{k})$
by
\[
\Gamma(U):=Y^{U},\ \ \ \ \text{for each }U\in{M}_{G}^{\beta}(t,t+h;\mathbb{R}%
^{k}).
\]
In the following, we show the map $\Gamma$ is a contraction if we take sufficiently small $h$.

\begin{lemma}
\label{myq9} There exists some constant $\delta>0$ depending only on
$T,G,k,\beta$ and $L$ such that for each $h\in(0,\delta]$,
\[
\Vert{Y}^{U}-{Y}^{\bar{U}}\Vert_{M_{G}^{\beta}(t,t+h;\mathbb{R}^{k})}\leq
\frac{1}{2}\Vert U-\bar{U}\Vert_{M_{G}^{\beta}(t,t+h;\mathbb{R}^{k}%
)},\ \ \ \ \text{for each}\ U,\bar{U}\in M_{G}^{\beta}(t,t+h;\mathbb{R}^{k}).
\]

\end{lemma}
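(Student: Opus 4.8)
\textbf{Proof plan for Lemma \ref{myq9}.}
The plan is to estimate the difference $\hat Y^l := Y^{U,l} - Y^{\bar U,l}$ using the $a$ $priori$ estimate for the difference of solutions to (one-dimensional) reflected $G$-BSDEs, namely Proposition \ref{the1.10}, componentwise, and then to sum over $l$ and absorb the small factor coming from the short time horizon $h$. Fix $2 \le \alpha < \beta$. For each $l$, the processes $Y^{U,l}$ and $Y^{\bar U,l}$ solve one-dimensional reflected $G$-BSDEs on $[t,t+h]$ with the \emph{same} obstacle $S^l$ and the \emph{same} terminal value $\zeta^l$, but with generators $f^{l,U}$ and $f^{l,\bar U}$ respectively. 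Hence in the notation of Proposition \ref{the1.10} we have $\hat\xi = 0$ and $\hat S = 0$, so only the generator-difference term survives:
\[
|\hat Y^l_s|^\alpha \le C\,\hat{\mathbb{E}}_s\Big[\int_s^{t+h} |f^{l,U}(r,Y^{\bar U,l}_r,Z^{\bar U,l}_r) - f^{l,\bar U}(r,Y^{\bar U,l}_r,Z^{\bar U,l}_r)|^\alpha\, dr\Big].
\]
By the Lipschitz assumption (A2), the integrand is bounded by $L^\alpha\big(\sum_{j\ne l}|U^j_r - \bar U^j_r|\big)^\alpha \le C\,|U_r - \bar U_r|^\alpha$, uniformly in $l$. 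Summing over $l=1,\dots,k$ and taking the (unconditional) $G$-expectation gives
\[
\hat{\mathbb{E}}[|\hat Y_s|^\alpha] \le C\,\hat{\mathbb{E}}\Big[\int_t^{t+h} |U_r - \bar U_r|^\alpha\, dr\Big]
\]
for all $s \in [t,t+h]$, with $C = C(T,G,k,L,\alpha)$ independent of $h$.

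Next I would integrate this inequality in $s$ over $[t,t+h]$ to pass to the $M_G^\alpha$-norm: this produces an extra factor $h$ on the right-hand side, yielding
\[
\|Y^U - Y^{\bar U}\|_{M_G^\alpha(t,t+h;\mathbb{R}^k)}^\alpha \le C h\, \|U - \bar U\|_{M_G^\alpha(t,t+h;\mathbb{R}^k)}^\alpha.
\]
To obtain the statement in the $M_G^\beta$-norm rather than $M_G^\alpha$, I would instead run the argument with $\alpha$ replaced throughout by $\beta$; this is legitimate because Lemma \ref{myq6} guarantees $Y^U, Y^{\bar U} \in M_G^\beta(t,t+h;\mathbb{R}^k)$, and the a priori estimates apply at the integrability level $\beta$ once we note $\hat\xi=0,\ \hat S=0$ (so the $\alpha=\beta$ endpoint is harmless — the usual loss of integrability only occurs through the $\beta$-integrable data, which here contributes nothing). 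Thus $\|Y^U - Y^{\bar U}\|_{M_G^\beta(t,t+h;\mathbb{R}^k)} \le (Ch)^{1/\beta}\|U - \bar U\|_{M_G^\beta(t,t+h;\mathbb{R}^k)}$.

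Finally, choose $\delta := \min\{T,\, (2^\beta C)^{-1}\}$; then for every $h \in (0,\delta]$ we have $(Ch)^{1/\beta} \le 1/2$, which is exactly the claimed contraction estimate, and $\delta$ depends only on $T,G,k,\beta,L$. The main obstacle I anticipate is the bookkeeping at the endpoint $\alpha = \beta$: one must make sure that invoking Proposition \ref{the1.10} (stated for $2 \le \alpha \le \beta$) is genuinely valid at $\alpha = \beta$ in the present situation, which hinges on the fact that all the "bad" terms in $\Psi_{t,T}$ multiply $(\hat{\mathbb{E}}_t[\sup|\hat S_s|^\alpha])^{1/\alpha}$ and vanish here because $\hat S \equiv 0$; the surviving term involves only $\hat f$, which is controlled directly by (A2) without any further loss of integrability. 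Everything else is a routine application of Gronwall-free integration in $s$ and Hölder.
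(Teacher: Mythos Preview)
Your proposal is correct and follows essentially the same approach as the paper: apply Proposition \ref{the1.10} componentwise with $\hat\xi=0$, $\hat S=0$, bound the generator difference via (A2), sum over $l$, and integrate in $s$ to pass to the $M_G^\beta$-norm. The only minor difference is that the paper works directly at level $\beta$ from the outset and uses the form $(\int_s^{t+h}\hat h_r\,dr)^\beta\le h^{\beta-1}\int_s^{t+h}|\hat h_r|^\beta dr$ to extract an additional factor $h^{\beta-1}$, yielding the sharper contraction constant $Ch$ rather than your $(Ch)^{1/\beta}$; both suffice for choosing $\delta$.
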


\begin{proof}
For each fixed $1\leq l\leq k$, by applying Proposition
\ref{the1.10} to $(Y^{U,l}-{Y}^{\bar{U},l})$, we obtain that
\[
|Y_{s}^{U,l}-{Y}_{s}^{\bar{U},l}|^{\beta}\leq C\mathbb{\hat{E}}_{s}[\int%
_{s}^{t+h}|\hat{h}^l_{r}|^\beta dr],\ \ \forall s\in\lbrack t,t+h],
\]
where
\[
\hat{h}^l_{s}=|f^{l,U}(s,Y_{s}^{\bar{U},l},Z_{s}^{\bar{U},l})-f^{l,\bar{U}%
}(s,Y_{s}^{\bar{U},l},Z_{s}^{\bar{U},l})|.
\]
By Assumption (A2), we then have
\[
\mathbb{\hat{E}}[|Y_{s}^{U,l}-{Y}_{s}^{\bar{U},l}|^{\beta}]\leq C\mathbb{\hat{E}}[\int_{t}^{t+h}|U_{r}-\bar{U}_{r}|^{\beta}dr],\ \ \forall
s\in\lbrack t,t+h].
\]
Summing over $l,$ we deduce that for $s\in\lbrack t,t+h]$,
\[
\mathbb{\hat{E}}[|Y_{s}^{U}-{Y}_{s}^{\bar{U}}|^{\beta}]\leq C
\sum\limits_{l=1}^{k}\mathbb{\hat{E}}[|Y_{s}^{U,l}-{Y}_{s}^{\bar{U},l}%
|^{\beta}]\leq C\mathbb{\hat{E}}[\int_{t}^{t+h}|U_{r}%
-\bar{U}_{r}|^{\beta}dr].
\]
Thus,
\[
\Vert{Y}^{U}-{Y}^{\bar{U}}\Vert_{M_{G}^{\beta}(t,t+h;\mathbb{R}^{k})}\leq
|\int_{t}^{t+h}\mathbb{\hat{E}}[|Y_{s}^{U}-{Y}_{s}^{\bar{U}}|^{\beta
}]ds|^{\frac{1}{\beta}}\leq Ch^{\frac{1}{\beta}}\Vert U-\bar{U}\Vert_{M_{G}^{\beta
}(t,t+h;\mathbb{R}^{k})}.
\]
Then we can take $\delta>0$ small enough such that, for each $h\in(0,\delta
]$,
\[
\Vert{Y}^{U}-{Y}^{\bar{U}}\Vert_{M_{G}^{\beta}(t,t+h;\mathbb{R}^{k})}\leq
\frac{1}{2}\Vert U-\bar{U}\Vert_{M_{G}^{\beta}(t,t+h;\mathbb{R}^{k})}.
\]
The proof is complete.
\end{proof}

Now we can state the proof for the existence and uniqueness of local solutions
as follows.

\begin{proof}
[The proof of Theorem \ref{my16}]   To prove the result, we shall use a fixed point argument for the map  $\Gamma:{M}_{G}^{\beta}(t,t+h;\mathbb{R}%
^{k})\rightarrow {M}_{G}^{\beta}(t,t+h;\mathbb{R}%
^{k})$. We first prove the existence. From Lemma \ref{myq9}, we see that $\Gamma$ is a contraction,
and thus has a unique fixed point $Y\in {M}_{G}^{\beta}(t,t+h;\mathbb{R}%
^{k})$ such that $\Gamma(Y)=Y$. Moreover, for $U=Y$, from Lemma \ref{myq6} we know that there exists solution $(Y^Y,Z^Y,A^Y)\in \mathcal{S}_{G}^{\alpha
}(t,t+h;\mathbb{R}^{k})$ that solves \eqref{myq4}. Combining the above analysis, we deduce that $(Y,Z^Y,A^Y)$ solves \eqref{myq4}, which can be written as follows:
 for $ 1\leq l\leq k$,  it holds on $[t,t+h]$ that
\[
Y_{s}^{l}=\zeta^{l}+\int_{s}^{t+h}f^{l}(r,Y_{r},Z_{r}^{Y,l} )dr-\int_{s}%
^{t+h}Z_{r}^{Y,l}dB_{r}+(A_{t+h}^{Y,l}-A_{s}^{Y,l}),
\]
with
\[
Y^l_s\geq S^l_s\quad
\text{and} \quad -\int_{t}^{s} (Y_{r}^{l}-S_{r}^{l})dA_{r}^{l} \ \text{being non-increasing $G$-martingale}.
\]
That means that $(Y,Z^{Y},A^{Y})$ is the solution of reflected $G$-BSDE with parameters $(\zeta,f,S)$ on time interval $[t,t+h]$.

We then consider the uniqueness. If $(Y',Z',A')\in\mathcal{S}_{G}^{\alpha}%
(t,t+h;\mathbb{R}^{k})$ is also the solution of reflected $G$-BSDE  with parameters $(\zeta,f,S)$ on time interval $[t,t+h]$, then $Y'\in {M}_{G}^{\beta}(t,t+h;\mathbb{R}%
^{k})$ and $\Gamma(Y')=Y'$, which means that $Y'$ is the fixed point of $\Gamma$. So $Y=Y'.$
Then for any $1\leq l\leq k$, $(Y^l,Z^{Y,l},A^{Y,l})$ and $(Y'^{,l},Z'^{,l},A'^{,l})$ are both solutions of \eqref{myq4} with $U=Y$. Applying Lemma \ref{myq6} implies that $(Z^Y,A^Y)=(Z',A')$.
\end{proof}

By a backward iteration on local solutions, we finally have the following new
proof for the global well-posedness Theorem \ref{MainThm} for reflected
$G$-BSDE (\ref{my1}).

\begin{proof}
We choose the constant $\delta$ determined in Theorem \ref{my16} and take an integer
$m$ large enough such that $m\delta\geq T$. We can then take $h=\frac{T}{m}$
and apply Theorem \ref{my16} to deduce that the reflected $G$-BSDE  with parameters $(\xi,f,S)$ on $[T-h,T]$ %(\ref{Myeq3-1})
admits a unique solution $(Y^{(m)},Z^{(m)},A^{(m)})\in\mathcal{S}_{G}^{\alpha
}(T-h,T;\mathbb{R}^{k})$. Choosing $T-{h}$ as the terminal time
and $Y_{T-h}^{(m)}$ as the terminal value, we apply Theorem \ref{my16}
again to obtain that the reflected $G$-BSDE (\ref{Myeq3-1}) with parameters $(Y^{(m)}_{T-h},f,S)$ on $[T-2{h},T-{h}]$ has a unique solution 
$(Y^{(m-1)},Z^{(m-1)},A^{(m-1)})\in\mathcal{S}_{G}^{\alpha}%
(T-2h,T-h;\mathbb{R}^{k})$. Finally we obtain a sequence
$(Y^{(i)},Z^{(i)},K^{(i)})_{i\leq m}$ by repeating this procedure. Let us
take
\[
{Y}_{t}=\sum\limits_{i=1}^{m}Y_{t}^{(i)}I_{[(i-1)h,ih)}(t)+Y_{T}%
^{(m)}I_{\{T\}}(t),\quad {Z}_{t}=\sum\limits_{i=1}^{m}Z_{t}^{(i)}I_{[(i-1)h,ih)}%
(t)+Z_{T}^{(m)}I_{\{T\}}(t)
\]
and
\[
A_{t}=A_{t}^{(i)}+\sum_{j={1}}^{i-1}A_{jh}^{(j)},\ \text{for}%
\ t\in\lbrack(i-1)h,ih), i=1,\cdots,m\quad\text{and}\quad A_{T}=A_{T}^{(m)}%
+\sum_{j=1}^{m-1}A_{jh}^{(j)},
\]
where we make the convention that $\sum_{j={1}}^{0}A_{jh}^{(j)}=0$. Then it is easy to see that
$({Y},{Z},{K})\in\mathcal{S}_{G}^{\alpha}(0,T;\mathbb{R}^{k})$ is a solution
to reflected $G$-BSDE with parameters $(\xi,f,S)$ and $Y\in{M}_{G}^{\beta}(0,T;\mathbb{R}^{k})$.
%This proves the existence.

The uniqueness result on the whole interval follows
from the one on each small time interval, which completes the proof.
\end{proof}

\section{Multi-dimensional obstacle problems for fully nonlinear PDEs}

In this section, we establish the connection between the multi-dimensional
reflected $G$-BSDEs and the system of fully nonlinear PDEs with obstacle
constraints. Roughly speaking, the solution of a multi-dimensional reflected
$G$-BSDE in a Markovian setting coincides with the unique viscosity solution
to a system of fully nonlinear PDE with obstacle constraints. %{\color{red} In the last version I send you, I delete the second "constraints" since "obstacle constraints" is repeated to the first one, so I delete the second "constraints" so that they are different}

For this purpose, for any fixed $t\in[0,T]$ and $\eta\in L_{G}^{p}%
(\Omega_{t};\mathbb{R}^{n})$ with $p\geq 2$, let us first consider the
following $G$-SDEs:
\begin{equation}
\label{GSDE}%
\begin{cases}
d X^{t,\eta}_{s}=b(s,X^{t,\eta}_{s})ds+h(s,X^{t,\eta}_{s})d\langle
B\rangle_{s}+\sigma(s,X^{t,\eta}_{s})dB_{s}, \ s\in[t,T],\\
X^{t,\eta}_{t}=\eta,
\end{cases}
\end{equation}
where $b,h,\sigma:[0,T]\times\mathbb{R}^{n}\rightarrow\mathbb{R}^{n}$ are
deterministic continuous functions satisfying

\begin{itemize}
\item[(Ai)] there exists a positive constant $L$ such that, for any
$t\in[0,T]$ and any $x,x^{\prime}\in\mathbb{R}^{n}$
\[
|b(t,x)-b(t,x^{\prime})|+|h(t,x)-h(t,x^{\prime})|+|\sigma(t,x)-\sigma
(t,x^{\prime})|\leq L|x-x^{\prime}|.
\]

\end{itemize}

By a similar analysis as in Chapter 5 of \cite{P10}, the $G$-SDE
\eqref{GSDE} has a unique solution $X^{t,\eta}\in M^{p}_{G}(t,T;\mathbb{R}%
^{n})$. Furthermore, the following estimates hold.

\begin{proposition}
[\cite{P10}]\label{the1.17} Let $\eta,\eta^{\prime}\in L_{G}^{p}(\Omega
_{t};\mathbb{R}^{n})$ and $p\geq2$. Then we have, for each $\delta\in
[0,T-t]$,
\begin{align*}
\hat{\mathbb{E}}_{t}[\sup_{s\in[t,t+\delta]}|X_{s}^{t,\eta}-X_{s}%
^{t,\eta^{\prime}}|^{p}]  &  \leq C|\eta-\eta^{\prime}|^p,\\
\hat{\mathbb{E}}_{t}[|X_{t+\delta}^{t,\eta}|^{p}]  &  \leq C(1+|\eta|^{p}),\\
\hat{\mathbb{E}}_{t}[\sup_{s\in[t,t+\delta]}|X_{s}^{t,\eta}-\eta|^{p}]  &  \leq
C(1+|\eta|^{p})\delta^{p/2},
\end{align*}
where the constant $C$ depends on $L,G,p,n$ and $T$.
\end{proposition}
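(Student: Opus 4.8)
The three bounds are the standard a priori estimates for $G$-SDEs, and the plan is to follow the scheme of Chapter V of \cite{P10}: combine the $G$-It\^{o} formula, a Burkholder--Davis--Gundy inequality under $G$-expectation, and Gronwall's lemma. First I would treat the difference estimate. Set $\hat{X}_s=X_s^{t,\xi}-X_s^{t,\xi'}$; subtracting the two copies of \eqref{GSDE}, $\hat{X}$ solves a $G$-SDE with $\hat{X}_t=\xi-\xi'$. Applying the $G$-It\^{o} formula to $|\hat{X}_s|^p$ (with a standard regularization near the origin) expresses $|\hat{X}_s|^p$ through $|\xi-\xi'|^p$, integrals against $ds$ and $d\langle B\rangle_s$ whose integrands are, by (Ai), dominated by $C|\hat{X}_r|^p$, and a stochastic integral. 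Since $\underline{\sigma}^2\le d\langle B\rangle_s/ds\le\bar{\sigma}^2$ q.s., the $d\langle B\rangle$ terms are comparable with the $ds$ terms. Taking the supremum over $s\in[t,t+\delta]$, estimating the martingale part by BDG, and then applying $\hat{\mathbb{E}}_t[\cdot]$ --- noting that $|\xi-\xi'|^p$ is $\Omega_t$-measurable and factors out --- one obtains
\[
\hat{\mathbb{E}}_t\Big[\sup_{s\in[t,t+\delta]}|\hat{X}_s|^p\Big]\le C|\xi-\xi'|^p+C\int_t^{t+\delta}\hat{\mathbb{E}}_t\Big[\sup_{u\in[t,r]}|\hat{X}_u|^p\Big]\,dr,
\]
and Gronwall closes the argument.

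The moment estimate $\hat{\mathbb{E}}_t[|X_{t+\delta}^{t,\xi}|^p]\le C(1+|\xi|^p)$ follows from the same computation applied to $|X_s^{t,\xi}|^p$: assumption (Ai) forces linear growth, $|b(s,x)|\le|b(s,0)|+L|x|$ and similarly for $h,\sigma$, with $b(\cdot,0),h(\cdot,0),\sigma(\cdot,0)$ bounded on $[0,T]$ by continuity, so the drift and quadratic-variation integrands are dominated by $C(1+|X_r^{t,\xi}|^p)$; BDG controls the martingale part, and Gronwall after taking $\hat{\mathbb{E}}_t$ gives the bound, indeed the stronger $\hat{\mathbb{E}}_t[\sup_{s\in[t,t+\delta]}|X_s^{t,\xi}|^p]\le C(1+|\xi|^p)$.

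For the oscillation estimate I would avoid a second Gronwall and use the integral equation directly on $[t,t+\delta]$,
\[
X_s^{t,\xi}-\xi=\int_t^s b(r,X_r^{t,\xi})\,dr+\int_t^s h(r,X_r^{t,\xi})\,d\langle B\rangle_r+\int_t^s\sigma(r,X_r^{t,\xi})\,dB_r.
\]
Raising to the $p$-th power, taking $\sup_{s\in[t,t+\delta]}$ and then $\hat{\mathbb{E}}_t[\cdot]$: the two finite-variation terms contribute (via H\"{o}lder in time) a bound of order $\delta^{p}\,\hat{\mathbb{E}}_t[\sup_{r\in[t,t+\delta]}(1+|X_r^{t,\xi}|^p)]$, while BDG bounds the stochastic integral by $C\delta^{p/2}\,\hat{\mathbb{E}}_t[\sup_{r\in[t,t+\delta]}(1+|X_r^{t,\xi}|^p)]$. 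Inserting the running-sup moment bound from the previous step and using $\delta\le T$, so that $\delta^{p}\le T^{p/2}\delta^{p/2}$, yields the claimed $C(1+|\xi|^p)\delta^{p/2}$.

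The only place the argument departs from the classical SDE case is the stochastic-calculus toolkit: one needs the $G$-It\^{o} formula and a BDG-type inequality for integrands in $M_G^p$ and $H_G^p$, and one must respect the sublinearity of $\hat{\mathbb{E}}_t[\cdot]$ (only $\Omega_t$-measurable factors may be pulled out, and there is no extra cancellation beyond the $G$-martingale property of the It\^{o} integral). These ingredients are available in \cite{P10} (and \cite{S11}), so once invoked the computations are routine --- which is why the result is quoted rather than reproved. The mildly delicate point is obtaining $C$ \emph{independent of} $\eta,\eta'$, which is exactly why the estimates are stated conditionally on $\Omega_t$, with the $\Omega_t$-measurable data carried through $\hat{\mathbb{E}}_t$ untouched.
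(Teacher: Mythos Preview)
Your sketch is correct and matches the standard argument in Chapter~V of \cite{P10}; the paper itself does not reprove this proposition but merely quotes it from \cite{P10}, so there is nothing further to compare.
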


Now, let us consider the following $k$-dimensional reflected
$G$-BSDEs on the interval $[t,T]$: %{\color{red}It should be "$f^i$" not "$f^l$"? I have changed this}
\begin{equation}
\label{decoupledRGBSDE}%
\begin{cases}
-dY^{t,\eta;i}_{s}=f^{i}(s,X_{s}^{t,\eta},Y_{s}^{t,\eta},Z_{s}^{t,\eta;i})ds
+g^{i}(s,X_{s}^{t,\eta},Y_{s}^{t,\eta},Z_{s}^{t,\eta;i})d\langle B\rangle
_{s}\\
\ \ \ \ \ \ \ \ \ \ -Z_{s}^{t,\eta;i}dB_{s}+dA^{t,\eta;i}_{s},\\
Y^{t,\eta;i}_{T}=\phi^{i}(X^{t,\eta}_{T}), \ 
Y^{t,\eta;i}_{s}\geq l^{i}(s,X_{s}^{t,\eta}),\ s\in[t,T],\\
\{-\int_{t}^{s}(Y^{t,\eta;i}_{r}- l^{i}(r,X_{r}^{t,\eta}))dA^{t,\eta;i}%
_{r}\}_{s\in[t,T]} \text{ is a non-increasing $G$-martingale},
\end{cases}
\end{equation}
where $1\leq i\leq k$ and the functions $\phi^{i}:\mathbb{R}^{n}%
\rightarrow\mathbb{R}$, $f^{i},g^{i}:[0,T]\times\mathbb{R}^{n}\times
\mathbb{R}^{k}\times\mathbb{R}\rightarrow\mathbb{R}$, $l^{i}:[0,T]\times
\mathbb{R}^{n}\rightarrow\mathbb{R}$ are continuous, deterministic functions
satisfying the following assumptions:

\begin{itemize}
\item[(Aii)] there exists a constant $L>0$ such that for any $1\leq i\leq k$,
$x_{1},x_{2}\in\mathbb{R}^{n}$, $y_{1},y_{2}\in\mathbb{R}^{k}$, $z_{1}%
,z_{2}\in\mathbb{R}$
\begin{align*}
&  |\phi^{i}(x_{1})-\phi^{i}(x_{2})|\leq L|x_{1}-x_{2}|,\ |l^{i}%
(t,x_{1})-l^{i}(t,x_{2})|\leq L|x_{1}-x_{2}|,\\
&  |f^{i}(t,x_{1},y_{1},z_{1})-f^{i}(t,x_{2},y_{2},z_{2})|\leq L(|x_{1}%
-x_{2}|+|y_{1}-y_{2}|+|z_{1}-z_{2}|),\\
&  |g^{i}(t,x_{1},y_{1},z_{1})-g^{i}(t,x_{2},y_{2},z_{2})|\leq L(|x_{1}%
-x_{2}|+|y_{1}-y_{2}|+|z_{1}-z_{2}|){\color{blue};}
\end{align*}
%	\item[(Aiii)]  $h$ is Lipschitz continuous w.r.t $x$ and bounded from above, $h(T,x)\leq \phi(x)$ for any $x\in\mathbb{R}^d$;

\item[(Aiii)] for any $1\leq i\leq k$, $x\in\mathbb{R}^n$ and $t\in[0,T]$,
$l^{i}(t,x)\leq\widetilde{l}^{i}(t,x)$ and $l^{i}(T,x)\leq\phi^{i}(x)$, where
$\widetilde{l}^{i}$ belongs to the space $C^{1,2}_{Lip}([0,T]\times
\mathbb{R}^{n})$, which is the collection of all functions of class
$C^{1,2}([0,T]\times\mathbb{R}^{n})$ whose partial derivatives of order less
than or equal to $2$ and itself are continuous in $t$ and Lipschitz continuous
in $x$;
\item[(Aiv)] for $1\leq i\leq k$, for any $z\in \mathbb{R}$ and $y,\bar{y}\in \mathbb{R}^{k}$ satisfying $y^{j}%
\geq\bar{y}^{j}$ for $j\neq i$ and $y^{i}=\bar{y}^{i}$, it holds $f^{i}(t,y,z)\geq{f}^{i}(t,\bar{y},z)$ and $g^{i}(t,y,z)\geq{g}^{i}(t,\bar{y},z)$.
\end{itemize}

By Theorem \ref{MainThm}, for any $\eta\in L_{G}^{\beta}(\Omega_{t}%
;\mathbb{R}^{n})$ with $\beta>2$, there exists a unique solution $(Y^{t,\eta},Z^{t,\eta
},A^{t,\eta})\in\mathcal{S}_{G}^{\alpha}(t,T;\mathbb{R}^{k})$ to reflected
$G$-BSDE \eqref{decoupledRGBSDE}, where $2\leq\alpha<\beta$. For each fixed
$(t,x)\in\lbrack0,T]\times\mathbb{R}^{n}$ and $1\leq i\leq k$, we define:
\begin{equation}
u^{i}(t,x):=Y_{t}^{t,x;i}. \label{valuefunction}%
\end{equation}
Note that $u^{i}$ is a deterministic function. By Proposition
\ref{estimate for Y}, \ref{difference of Y} and \ref{the1.17}, it is easy to to check that
there exists a constant $C$ depending on $L,G,n,k$ and $T$, such that
\begin{align*}
	|u(t,x)|  &  \leq C(1+|x|),\\
	|u(t,x)-u(t,x^{\prime}) | &  \leq C|x-x^{\prime}|.
\end{align*}

\begin{proposition}
\label{continuity} The function $u:[0,T]\times\mathbb{R}^{n}\rightarrow
\mathbb{R}^{k}$ is continuous.
\end{proposition}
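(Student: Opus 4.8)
The plan is to establish joint continuity in $(t,x)$ by combining the already-obtained Lipschitz continuity in the spatial variable $x$ (uniform in $t$) with continuity in the time variable $t$, and then using the triangle inequality to glue the two. The spatial estimate is the bound $|u(t,x)-u(t,x')|\le C|x-x'|$ displayed just before the statement, valid uniformly in $t$. So it remains to prove that for each fixed $x$, the map $t\mapsto u(t,x)$ is continuous, and more precisely one wants local uniformity so that the splitting
\[
|u(t,x)-u(t',x')|\le |u(t,x)-u(t,x')| + |u(t,x')-u(t',x')|
\]
yields the result; since the first term is controlled by $C|x-x'|$ and the second will be controlled by a modulus of continuity in time depending only on $|x'|$ (hence bounded on compacts), joint continuity follows.

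For the time-continuity, fix $x\in\mathbb{R}^n$ and take $t<t'$ with $t'-t\le\delta$ (the $\delta$ from Theorem \ref{my16}), so that on $[t,t']$ the reflected $G$-BSDE has a solution with terminal data $Y^{t',x}_{t'}$ at time $t'$; by the flow property $u(t,x)=Y_t^{t,x}$ equals the value at time $t$ of the reflected $G$-BSDE on $[t,t']$ with terminal condition $Y^{t,x}_{t'}=u(t',X^{t,x}_{t'})$ evaluated along $X^{t,x}$. The idea is to apply the a priori estimate of Proposition \ref{difference of Y} to compare this solution on $[t,t']$ with the (trivial, constant-in-the-relevant-sense) reference and, more usefully, to directly estimate $|u(t,x)-u(t',x)|=|Y^{t,x}_t-u(t',x)|$. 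One writes $u(t,x)-u(t',x) = \big(Y^{t,x}_t - Y^{t',x}_{t'}\big)$ up to identifying $Y^{t',x}_{t'}=u(t',x)$, and uses Proposition \ref{difference of Y} on the interval $[t,t']$ together with Proposition \ref{estimate for Y}: the terminal-value difference is $\phi(X^{t,x}_{t'})$ against $Y^{t',x}_{t'}$, the obstacle difference involves $l(\cdot,X^{t,x})$ on $[t,t']$, and the driver difference involves $f(\cdot,X^{t,x},\cdot,\cdot)$ on $[t,t']$. Each of these is controlled, via the Lipschitz assumptions (Aii), by $\hat{\mathbb{E}}_t[\sup_{s\in[t,t']}|X^{t,x}_s-x|^\alpha]$ plus $\int_t^{t'}(\cdots)\,ds$; the former is $O((1+|x|^\alpha)(t'-t)^{\alpha/2})$ by the third estimate in Proposition \ref{the1.17}, and the latter is $O((t'-t)\cdot(1+|x|^\alpha))$ using the linear growth of $u$ and of $f,g,l$ and Proposition \ref{the1.17}. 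Collecting terms, $|u(t,x)-u(t',x)|\le C(1+|x|)(t'-t)^{1/2}$, which is the desired time-modulus, locally uniform in $x$.

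The main obstacle I anticipate is bookkeeping the $\Psi_{t,T}$-type term in Proposition \ref{difference of Y} and the $\sup_{s}|\hat S_s|$ term so that the exponent $\tfrac{\alpha-1}{\alpha}$ does not destroy the $(t'-t)^{1/2}$ rate — one must check that the obstacle-difference factor $(\hat{\mathbb{E}}_t[\sup_{s\in[t,t']}|\hat S_s|^\alpha])^{1/\alpha}$ is itself $O((1+|x|)(t'-t)^{1/2})$ (it is, since $\hat S_s = l(s,X^{t,x}_s)-l(t',x)$ on a short interval, Lipschitz in $X$ and continuous in $s$), while $\Psi_{t,T}^{(\alpha-1)/\alpha}$ stays bounded by a constant depending only on $|x|$ via the a priori bounds of Proposition \ref{estimate for Y} and Proposition \ref{the1.17}. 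A secondary technical point is that one should allow $t'-t$ larger than $\delta$ by iterating the short-interval estimate a bounded number of times (at most $\lceil T/\delta\rceil$), which only changes the constant $C$; this is routine. Once the local modulus $(1+|x|)(t'-t)^{1/2}+|x-x'|$ is in hand, continuity of $u$ on $[0,T]\times\mathbb{R}^n$ is immediate.
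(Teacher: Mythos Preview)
Your strategy—Lipschitz in $x$ plus a time-modulus extracted from Proposition~\ref{difference of Y} together with the SDE bounds of Proposition~\ref{the1.17}—is correct and is what the paper does, but your execution differs in one respect worth noting, and there is a slip. On the interval $[t,t']$ the terminal datum of $Y^{t,x}$ at time $t'$ is $Y^{t,x}_{t'}$, not $\phi(X^{t,x}_{t'})$; you wrote this correctly earlier as $u(t',X^{t,x}_{t'})$, so this is presumably a typo, but note that invoking $Y^{t,x}_{t'}=u(t',X^{t,x}_{t'})$ requires the Markov/flow property for the reflected $G$-BSDE, which is true but not established in the paper. The paper sidesteps this (and your $\delta$-restriction and subinterval iteration) by a cleaner device: it \emph{extends} each triple to all of $[0,T]$ by freezing it on $[0,t_i]$, setting $X^{t_i,x}_s:=x$, $Y^{t_i,x}_s:=u(t_i,x)$, $Z^{t_i,x}_s:=0$, $A^{t_i,x}_s:=0$ for $s\in[0,t_i]$, checks that the extension solves a reflected $G$-BSDE on $[0,T]$ with generator $\widetilde f^{t_i,x;i}(s,y,z)=I_{[t_i,T]}(s)f^i(s,X^{t_i,x}_s,y,z)$ and obstacle $\widetilde S^{t_i,x;i}_s=l^i(t_i,x)I_{[0,t_i)}(s)+l^i(s,X^{t_i,x}_s)I_{[t_i,T]}(s)$, and then applies Proposition~\ref{difference of Y} once at $s=0$ to compare $u(t_1,x)=Y^{t_1,x}_0$ with $u(t_2,x)=Y^{t_2,x}_0$ directly. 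The terminal, driver, and obstacle differences and the bound on $\Psi_{0,T}$ are then handled exactly along the lines you outline, yielding continuity in $t$ locally uniformly in $x$. Your route is conceptually equivalent but carries more bookkeeping; the paper's extension trick buys a one-shot application of the a priori estimate without any flow property or iteration.
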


\begin{proof}
It remains to prove that $u$ is continuous in $t$. For simplicity, we assume
that $g\equiv0$. For any $t\geq0$, we define
\[
Y_{s}^{t,x}:=Y_{t}^{t,x},\ X_{s}^{t,x}:=x,\ A_{s}^{t,x}:=0,\ Z_{s}%
^{t,x}:=0,\ l(s,X_{s}^{t,x}):=l(t,x),\text{ for }s\in\lbrack0,t].
\]
For $1\leq i\leq k$, it is easy to check that $Y^{t,x;i}$ is the first component of solution to 
reflected $G$-BSDE on the interval $[0,T]$ with terminal value $\phi^{i}%
(X_{T}^{t,x;i})$, generator $\widetilde{f}^{t,x;i}$ and obstacle process
$\widetilde{S}^{t,x;i}$, where
\[
\widetilde{f}^{t,x;i}(s,y,z)=I_{[t,T]}(s)f^{i}(s,X_{s}^{t,x}%
,y,z),\ \widetilde{S}_{s}^{t,x;i}=l(t,x)I_{[0,t)}(s)+l(s,X_{s}^{t,x}%
)I_{[t,T]}(s).
\]
Set
\begin{align*}
&  I_{s}^{t,x}=\widetilde{l}(t,x)I_{[0,t)}(s)+\widetilde{l}(s,X_{s}%
^{t,x})I_{[t,T]}(s),\\
&  b_{s}^{t,x}=(\partial_{s}\widetilde{l}(s,X_{s}^{t,x})+\langle
b(s,X_{s}^{t,x}),D_{x}\widetilde{l}(s,X_{s}^{t,x})\rangle)I_{[t,T]}(s),\\
&  \sigma_{s}^{t,x}=\langle\sigma(s,X_{s}^{t,x}),D_{x}\widetilde{l}%
(s,X_{s}^{t,x})\rangle I_{[t,T]}(s)
\end{align*}
For $0\leq t_{1}\leq t_{2}\leq T$, by Proposition \ref{difference of Y}, we
have
\begin{equation}%
\begin{split}
&  |u(t_{1},x)-u(t_{2},x)|^{2}=|Y_{0}^{t_{1},x}-Y_{0}^{t_{2},x}|^{2}\\
\leq &  C\hat{\mathbb{E}}[|\phi(X_{T}^{t_{1},x})-\phi(X_{T}^{t_{2},x}%
)|]+C\hat{\mathbb{E}}[\int_{0}^{T}|\widetilde{f}^{t_{1},x}(s,Y_{s}^{t_{1},x},Z_{s}%
^{t_{1},x})-\widetilde{f}^{t_{2},x}(s,Y_{s}^{t_{1},x},Z_{s}^{t_{1},x})|^{2}ds](=:\textrm{I})\\
&  +C(\hat{\mathbb{E}}[\sup_{s\in\lbrack0,T]}|\widetilde{S}_{s}^{t_{1},x}-\widetilde{S}_{s}^{t_{2}%
,x}|^{2}])^{1/2}\Psi_{0,T}^{1/2}(=:\textrm{II}),
\end{split}
\label{e8}%
\end{equation}
where
\[
\Psi_{0,T}=\sum_{i=1}^{2}\hat{\mathbb{E}}[|\phi(X_{T}^{t_{i},x})|^{2}%
+\sup_{s\in\lbrack0,T]}|I_{s}^{t_{i},x}|^{2}+\int_{0}^{T}|\widetilde{f}%
^{t_{i},x}(s,\mathbf{0},0)|^{2}+|b_{s}^{t_{i},x}|^{2}+|\sigma_{s}^{t_{i}%
,x}|^{2}+|Y_{s}^{t_{i},x}|^{2}ds].
\]
Similar as the proof of Proposition 4.5 in \cite{Liu-stochastics}, we have
\[
\textrm{I}\leq C(1+|x|^{2})(t_{2}-t_{1}).
\]
Applying Proposition \ref{the1.17} yields that $\Psi_{0,T}\leq C(1+|x|^{2})$.
We directly calculate that
\begin{align*}
&  \hat{\mathbb{E}}[\sup_{s\in\lbrack0,T]}|S_{s}^{t_{1},x}-S_{s}^{t_{2}%
,x}|^{2}]\\
\leq &  |\widetilde{l}(t_{1},x)-\widetilde{l}(t_{2},x)|^{2}+\hat{\mathbb{E}%
}[\sup_{s\in\lbrack t_{1},t_{2}]}|\widetilde{l}(s,X_{s}^{t_{1},x}%
)-\widetilde{l}(t_{2},x)|^{2}]+\hat{\mathbb{E}}[\sup_{s\in\lbrack t_{2}%
,T]}|\widetilde{l}(s,X_{s}^{t_{1},x})-\widetilde{l}(s,X_{s}^{t_{2},x})|^{2}]\\
\leq &  2\sup_{s\in\lbrack t_{1},t_{2}]}|\widetilde{l}(s,x)-\widetilde{l}%
(t_{2},x)|^{2}+\hat{\mathbb{E}}[\sup_{s\in\lbrack t_{1},t_{2}]}|\widetilde{l}%
(s,X_{s}^{t_{1},x})-\widetilde{l}(s,x)|^{2}]+C\hat{\mathbb{E}}[\sup
_{s\in\lbrack t_{2},T]}|X_{s}^{t_{1},x}-X_{s}^{t_{2},x}|^{2}]\\
\leq &  2\sup_{s\in\lbrack t_{1},t_{2}]}|\widetilde{l}(s,x)-\widetilde{l}%
(t_{2},x)|^{2}+C\hat{\mathbb{E}}[\sup_{s\in\lbrack t_{1},t_{2}]}|X_{s}%
^{t_{1},x}-x|^{2}]+C\hat{\mathbb{E}}[\sup_{s\in\lbrack t_{2},T]}|X_{s}%
^{t_{2},X_{t_{2}}^{t_{1},x}}-X_{s}^{t_{2},x}|^{2}]\\
\leq &  2\sup_{s\in\lbrack t_{1},t_{2}]}|\widetilde{l}(s,x)-\widetilde{l}%
(t_{2},x)|^{2}+C(1+|x|^{2})(t_{2}-t_{1})+C\hat{\mathbb{E}}[|X_{t_{2}}%
^{t_{1},x}-x|^{2}]\\
\leq &  2\sup_{s\in\lbrack t_{1},t_{2}]}|\widetilde{l}(s,x)-\widetilde{l}%
(t_{2},x)|^{2}+C(1+|x|^{2})(t_{2}-t_{1}).
\end{align*}
Then, combining all the above analysis, we obtain the desired result.
\end{proof}

\begin{remark}
\upshape{The continuity property of $u$ only needs Assumptions (Ai)-(Aiii). The Assumption (Aiv) is used to guarantee that $u^i$, $1\leq i\leq k$ can be approximated by certain monotone sequences in the proof of Theorem \ref{Rep-M} below.}\end{remark}

Consider the following system of parabolic PDEs with obstacle constraints:
\begin{equation}
\label{PDE}%
\begin{cases}
\min\{u^{i}(t,x)-l^{i}(t,x),-\partial_{t} u^{i}(t,x)-F^{i}(D^{2}_{x}
u^{i},D_{x} u^{i}, u,x,t)\}=0, & (t,x)\in(0,T)\times\mathbb{R}^{n},\\
u^{i}(T,x)=\phi^{i}(x), & x\in\mathbb{R}^{n}; 1\leq i\leq k,
\end{cases}
\end{equation}
where
\begin{align*}
F^i(A,p,r,x,t):=&G(\sigma^T(t,x)A\sigma(t,x)+2\langle p,h(t,x)\rangle+2 g^i(t,x,r,\langle\sigma(t,x),p\rangle))\\
&+\langle b(t,x),p\rangle+f^i(t,x,r,\langle\sigma(t,x),p\rangle),\\
&\textrm{for } (A,p,r,x,t)\in \mathbb{S}(n)\times \mathbb{R}^n\times \mathbb{R}^k\times\mathbb{R}^n\times[0,T],
\end{align*}
and $\mathbb{S}(n)$ is the set of symmetric $n\times n$-matrices. %{\color{red} Since there are two where, so Change to: With $\mathbb{S}(n)$ being the set of symmetric $n\times n$-matrices}.

In the following, we will show that the value function defined in
\eqref{valuefunction} is the unique solution to the above PDE. Note that
$u^{i}$ is continuous but may  be not differentiable. Therefore, when we call
$u^{i}$ a solution to \eqref{PDE}, it means a viscosity solution. Let us first
introduce the definition of viscosity solution to the PDE \eqref{PDE}, which
needs the following definitions of sub-jets and super-jets. For more details,
we may refer to the paper \cite{CIL}.
%We need to consider solutions of the above PDE in the viscosity sense. The best candidate to define the notion of viscosity solution is by using the language of sub- and super-jets (see \cite{CIL}).% In the following, let $\mathbb{S}_d$ denote the set of all $d\times d$ symmetric  matrices.

\begin{definition}
Let $u\in C((0,T)\times\mathbb{R}^{n})$ and $(t,x)\in(0,T)\times\mathbb{R}%
^{n}$. We denote by $\mathcal{P}^{2,+} u(t,x)$ $($the ``parabolic superjet" of
$u$ at $(t,x)$$)$ the set of triples $(p,q,X)\in\mathbb{R}\times\mathbb{R}%
^{n}\times\mathbb{S}(n)$ satisfying
\begin{align*}
u(s,y)\leq u(t,x)+p(s-t)+ \langle q,y-x\rangle+\frac{1}{2} \langle
X(y-x),y-x\rangle+o(|s-t|+|y-x|^{2}).
\end{align*}
Similarly,
we define $\mathcal{P}^{2,-} u(t,x)$ $($the ``parabolic subjet" of $u$ at
$(t,x)$$)$ by $\mathcal{P}^{2,-} u(t,x):=-\mathcal{P}^{2,+}(- u)(t,x)$.
\end{definition}

%Then, we give the definition of the viscosity solution of the obstacle problem \eqref{eq1.7}.

\begin{definition}
Let $u\in C([0,T]\times\mathbb{R}^{n};\mathbb{R}^{k})$. It is called a:
\newline\noindent(i) viscosity subsolution of \eqref{PDE} if for each $1\leq
i\leq k$, $u^{i}(T,x)\leq\phi^{i}(x)$, $x\in\mathbb{R}^{n}$, and at any point
$(t,x)\in(0,T)\times\mathbb{R}^{n}$, for any $(p,q,X)\in\mathcal{P}^{2,+}%
u^{i}(t,x)$,
\[
\min\big(u^{i}(t,x)-l^{i}(t,x), -p-F^{i}(X,q,u(t,x),x,t)\big)\leq0;
\]
(ii) viscosity supersolution of \eqref{PDE} if for each $1\leq i\leq k$, $u^{i}%
(T,x)\geq\phi^{i}(x)$, $x\in\mathbb{R}^{n}$, and at any point $(t,x)\in
(0,T)\times\mathbb{R}^{n}$, for any $(p,q,X)\in\mathcal{P}^{2,-}u^{i}(t,x)$,
\[
\min\big(u^{i}(t,x)-l^{i}(t,x), -p-F(X,q,u(t,x),x,t)\big)\geq0;
\]
(iii) viscosity solution of \eqref{PDE} if it is both a viscosity subsolution and supersolution.
\end{definition}

\begin{theorem}\label{Rep-M}
The function $u=(u^{1},\cdots,u^{k})$ defined by \eqref{valuefunction} is the
unique viscosity solution to the system of parabolic PDE \eqref{PDE}.
\end{theorem}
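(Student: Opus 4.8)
The plan is to establish existence via the probabilistic representation and uniqueness via a comparison-type argument. For existence, I would first show that the value function $u$ is a viscosity subsolution and supersolution of \eqref{PDE} by exploiting the dynamic programming principle: for $0\le t\le t+\delta\le T$ one has the flow property $Y_s^{t,x}=Y_s^{t+\delta,X_{t+\delta}^{t,x}}$ for $s\ge t+\delta$, hence
\[
u^i(t,x)=Y_t^{t,x;i}, \qquad Y_{t+\delta}^{t,x;i}=u^i(t+\delta,X_{t+\delta}^{t,x}),
\]
so that on $[t,t+\delta]$ the process $Y^{t,x;i}$ solves a one-dimensional reflected $G$-BSDE with terminal value $u^i(t+\delta,X_{t+\delta}^{t,x})$, driver $f^i(s,X_s^{t,x},Y_s^{t,x},z^i)$ (note the $Y^{t,x}$-dependence is frozen along the given flow, making each component genuinely one-dimensional), $g\equiv0$, and obstacle $l^i(s,X_s^{t,x})$. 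This reduces the problem to the well-understood one-dimensional Markovian reflected $G$-BSDE, and the viscosity property then follows component-by-component exactly as in the one-dimensional obstacle case treated in \cite{LPSH}: fix a smooth test function $\varphi$ touching $u^i$ from above (resp. below) at $(t,x)$, use the local reflected $G$-BSDE on $[t,t+\delta]$, apply $G$-It\^o's formula to $\varphi(s,X_s^{t,x})$, divide by $\delta$ and send $\delta\to0$, treating the two regimes (obstacle active / inactive) separately through the Skorohod condition on $A^{t,x;i}$. The continuity of $u$ needed to make sense of "viscosity solution" is already provided by Proposition \ref{continuity}.

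For uniqueness, I would prove a comparison theorem for the PDE system \eqref{PDE}: any viscosity subsolution $u$ that is dominated by any viscosity supersolution $v$ at $t=T$ satisfies $u\le v$ on $[0,T]\times\mathbb{R}^n$. The diagonal structure is essential here: the nonlinearity $F^i$ depends on the full vector $r=u$ only through the zeroth-order term $f^i(t,x,r,\cdot)+G(\cdots+2g^i(t,x,r,\cdot))$, and by assumption (Aii) $f^i,g^i$ are Lipschitz in $r$, while the second-order operator $G(\sigma^\top A\sigma+\cdots)$ acts only on the $i$-th component. One runs the classical doubling-of-variables argument of Crandall--Ishii--Lions \cite{CIL} on the index $i^*$ and point maximizing $u^{i}(t,x)-v^{i}(t,x)$ over all $i$: the Ishii lemma produces matrices $X,Y$ with $X\le Y$, the degenerate-ellipticity of $G$ (monotone in the matrix argument) handles the second-order terms, the penalization term $|x-y|^2/\varepsilon$ is controlled by the Lipschitz continuity of $b,h,\sigma$, and the coupling through the other components is absorbed because at the maximizing index $u^{i^*}(t,x)-v^{i^*}(t,x)\ge u^{j}(t,x)-v^{j}(t,x)$ for all $j$, which combined with the monotonicity hypothesis implicit in (Aii)–(Aiii) (or a standard Gronwall-type absorption of the Lipschitz-in-$r$ error) closes the estimate. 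The obstacle part $\min\{\cdot-l^i,\cdot\}$ is handled by the usual trichotomy: if $u^{i^*}>l^{i^*}$ at the max one uses the PDE inequality, otherwise $u^{i^*}-l^{i^*}\le0\le v^{i^*}-l^{i^*}$ gives the contradiction directly.

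I expect the main obstacle to be the comparison/uniqueness step for the coupled system rather than the representation. The $G$-framework itself introduces no essentially new difficulty into the Crandall--Ishii--Lions machinery since $G$ is a convex, degenerate-elliptic, positively homogeneous operator, so existing techniques (e.g. as in \cite{P10} for the scalar fully nonlinear $G$-PDE) transfer. The genuinely delicate point is the inter-component coupling: one must argue that selecting the index $i^*$ realizing the maximum of $u^i-v^i$ lets the off-diagonal dependence on $r$ be turned into a favorable sign or a Lipschitz error term that Gronwall can absorb after the standard $e^{\lambda t}$ rescaling. Once the comparison principle is in hand, uniqueness is immediate: the value function $u$ from \eqref{valuefunction} is both a sub- and a supersolution with the correct terminal data, so it is squeezed between any other pair, forcing equality.
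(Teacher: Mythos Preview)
Your existence argument takes a genuinely different route from the paper. The paper does not use the dynamic programming principle at all: it relies on the penalization scheme of Section~4.1, shows (via Theorem~4.7 of \cite{Liu-stochastics}) that $u^{m;i}(t,x):=Y_t^{m,t,x;i}$ is the viscosity solution of the penalized system \eqref{penalizedPDE}, obtains uniform convergence $u^{m}\to u$ on compacts from the penalized-to-reflected estimates together with Dini's theorem, and then passes to the limit in the viscosity inequalities using the stability result of Lemma~6.1 in \cite{CIL}. This has the advantage of never having to analyze the process $A^{t,x;i}$ in the viscosity argument. Your approach---freeze the off-diagonal components as $u^j(s,X_s^{t,x})$ via the Markov property, reduce each coordinate to a one-dimensional reflected $G$-BSDE, and run the test-function argument on $[t,t+\delta]$---is conceptually more direct and does not depend on having built the solution by penalization, but the ``Skorohod step'' is more delicate here than in the classical case: in the $G$-framework $A^{t,x;i}$ is not a pure local-time reflection term but a single non-decreasing process constrained only by the requirement that $-\int(Y^{i}-S^{i})\,dA^{i}$ be a non-increasing $G$-martingale, so even on $\{Y^{i}>S^{i}\}$ the increment $dA^{i}$ need not vanish. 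Extracting the subsolution inequality from this weaker condition can be done, but it is not literally ``exactly as in \cite{LPSH}'', since \cite{LPSH} itself proves the PDE link through penalization as well.

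On uniqueness the paper says no more than you do---it simply invokes \cite{CIL} and \cite{TY}---but one point in your sketch is inaccurate: assumptions (Aii)--(Aiii) contain no monotonicity of $f^i,g^i$ in the off-diagonal variables $y^j$, $j\neq i$. Your parenthetical alternative (``Gronwall-type absorption of the Lipschitz-in-$r$ error'') is the correct mechanism for such weakly coupled systems: one maximizes simultaneously over the component index and the space--time variable, uses that at the maximizing index $u^{i^*}-v^{i^*}\ge u^j-v^j$ for all $j$, and absorbs the Lipschitz coupling with an exponential-in-$t$ weight; this is the content behind the citations the paper gives.
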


\begin{proof}
For each fixed $(t,x)\in\lbrack0,T]\times\mathbb{R}^{n}$, for any
$m=1,2,\cdots$ and $1\leq i\leq k$, we first consider the following penalized
$G$-BSDEs:
\begin{align*}
Y_{s}^{m,t,x;i}= &  \phi^{i}(X_{T}^{t,x})+\int_{s}^{T}f^{i}(r,X_{r}%
^{t,x},Y_{r}^{m,t,x},Z_{r}^{m,t,x;i})dr+\int_{s}^{T}g^{i}(r,X_{r}^{t,x}%
,Y_{r}^{m,t,x},Z_{r}^{m,t,x;i})d\langle B\rangle_{r}\\
&  +m\int_{s}^{T}(Y_{r}^{m,t,x;i}-l^{i}(r,X_{r}^{t,x}))^{-}dr-\int_{s}%
^{T}Z_{r}^{m,t,x;i}dB_{r}-(K_{T}^{m,t,x;i}-K_{s}^{m,t,x;i}).
\end{align*}
By Theorem 4.7 in \cite{Liu-stochastics}, the function $u^{m}=(u^{m;1}%
,\cdots,u^{m;k})$ defined as
\[
u^{m,i}(t,x):=Y_{t}^{m,t,x;i},\ (t,x)\in\lbrack0,T]\times\mathbb{R}%
^{n},\ 1\leq i\leq k,
\]
is the viscosity solution of the  parabolic PDE
\begin{equation}%
\begin{cases}
\partial_{t}u^{m;i}(t,x)+F^{i}(D_{x}^{2}u^{m;i},D_{x}u^{m;i},u^{m}
,x,t)\\
\ \ \ \ \ \ \ \ \ \ \ \ \ \ \ +m(u^{m;i}(t,x)-l^{i}(t,x))^{-}=0, & (t,x)\in(0,T)\times\mathbb{R}^{n}, 1\leq i\leq k,\\
u^{m;i}(T,x)=\phi^{i}(x), & x\in\mathbb{R}, 1\leq i\leq k.
\end{cases}
\label{penalizedPDE}%
\end{equation}
Besides, by the construction of the solution of reflected $G$-BSDEs via
penalization and Theorem \ref{Myth2-3}, we know that for each $1\leq i\leq k$ and $(t,x)\in
\mathbb{R}^{n}$, $u^{m;i}(t,x)$ converges monotonically up to $u^{i}(t,x)$ as
$m$ approaches infinity.  Recalling
Proposition 4.5 in \cite{Liu-stochastics} and  Proposition \ref{continuity}, $u^{m;i},u^{i}\in C([0,T]\times
\mathbb{R}^{n})$. Thus it follows from the Dini theorem that $u^{m;i}$ converges
uniformly to $u^{i}$ on compact sets.

We first prove that $u$ is a viscosity subsolution of \eqref{PDE}. Let $(t,x)$
be a point such that $u^{i}(t,x)>l^{i}(t,x)$ and let $(p,q,X)\in
\mathcal{P}^{2,+}u^{i}(t,x)$. By Lemma 6.1 in \cite{CIL}, we may find
sequences
\[
m_{j}\rightarrow\infty, \ (t_{j},x_{j})\rightarrow(t,x), \ (p_{j},q_{j}%
,X_{j})\in\mathcal{P}^{2,+}u^{m_{j};i}(t_{j},x_{j}),
%\ 1\leq i\leq k,
\]
such that $(p_{j},q_{j},X_{j})\rightarrow(p,q,X)$. Note that $u^{m}$ is a
viscosity solution to Equation \eqref{penalizedPDE}, hence a subsolution. We
have, for any $j$,
\[
-p_{j}-F^{i}(X_{j},q_{j}, u^{m_{j}}(t_{j},x_{j}),x_{j},t_{j})-m_{j}%
(u^{m_{j};i}(t_{j},x_{j})-l^{i}(t_{j},x_{j}))^{-}\leq0.
\]
Since $(t,x)$ is a point at which $u^{i}(t,x)>h^{i}(t,x)$, it follows from the
uniform convergence of $u^{m;i}$ that $u^{m_{j};i}(t_{j},x_{j})>l^{i}%
(t_{j},x_{j})$ for large enough $j$. Letting $j$ go to infinity in the above
inequality, we obtain
\[
-p-F^{i}(X,q, u(t,x),x,t)\leq0,
\]
which implies that $u$ is a subsolution of \eqref{PDE}.

It remains to prove that $u$ is a supersolution of \eqref{PDE}. For any $1\leq
i\leq k$ and $(t,x)\in[0,T]\times\mathbb{R}^{n}$, let $(p,q,X)\in
\mathcal{P}^{2,-}u^{i}(t,x)$. It is obvious that $u^{i}(t,x)\geq l^{i}(t,x)$.
Then it is sufficient to prove that
\[
-p-F^{i}(X,q, u(t,x),x,t)\geq0.
\]
By a similar analysis as above, there exist sequences 
\[
m_{j}\rightarrow\infty, \ (t_{j},x_{j})\rightarrow(t,x), \ (p_{j},q_{j}%
,X_{j})\in\mathcal{P}^{2,-}u^{m_{j};i}(t_{j},x_{j}),
%\ 1\leq i\leq k,
\]
such that $(p_{j},q_{j},X_{j})\rightarrow(p,q,X)$. Since $u^{m}$ is a
viscosity solution to Equation \eqref{penalizedPDE}, hence a supersolution, we
have, for any $j$,
\[
-p_{j}-F^{i}(X_{j},q_{j}, u^{m_{j}}(t_{j},x_{j}),x_{j},t_{j})-m_{j}%
(u^{m_{j};i}(t_{j},x_{j})-l^{i}(t_{j},x_{j}))^{-}\geq0.
\]
Letting $j$ approach infinity, we get the desired result.

The uniqueness of viscosity solutions to \eqref{PDE} follows from classical
arguments. See, e.g., \cite{CIL} and \cite{TY}. So we omit it and the proof is complete.
\end{proof}

%%%%%%%%%%%%%%%%%%%%%%%%%%%%%%%%%%%%%%%%%%%%%%%%%%%%%%%%%%%%%%%%%%%%%%%%%%%%%%%%%%%%%%%%%%%%%%%%%%%%%%%%%%%%%%%%%%%%%%%%%

%%%%%%%%%%%%%%%%%%%%%%%%%%%%%%%%%

%%%%%%%%%%%%%%%%%%%%%%%%%%%%%%%%%

%%%%%%%%%%%%%%%%%%%%%%%%%%%%%%%%%%%%%%

\section*{Acknowledgments}
\noindent 
The authors would like to thank
Professor Peng Luo for helpful discussions. The authors also thank the editor and the referee for useful suggestions that improved the first version of the paper.
Li's work was supported  by the National Natural Science Foundation of China (No. 12301178), the Natural Science Foundation of Shandong Province for Excellent Young Scientists Fund Program (Overseas) (No. 2023HWYQ-049) and  the Qilu Young Scholars Program of Shandong University. Liu's work was supported by National Natural Science
Foundation of China (No. 12201315) and the Fundamental Research Funds for the
Central Universities, Nankai University (No. 63221036).

%%%%%%%%%%%%%%%%%%%%%%%%%%%%%%%%%%%%%%%%%%%%%%%%%%%%%%%

%%%%%%%%%%%%%%%%%%%%%%%%%%%%%%%%%%%%%%

%%%%%%%%%%%%%%%%%%%%%%%%%%%%%%%%%%%%%%%%%%%%%%%%%%%%%%%

%\appendix

%\section{}
%\label{Appendix}

%\renewcommand{\theequation}{A-\arabic{equation}}

%%%%%%%%%%%%%%%%%%%%%%%%%%%%%%%%%%%%%%%%%%%%%%%%%%%%%%%%%%%%%

%\section{}
%\label{AppendixC}

%\renewcommand{\theequation}{B-\arabic{equation}}

\section*{Declarations}
\textbf{Conflict of Interest}\ \ 
The authors declared that they have no conflict of interest.
\\
\\
\textbf{Data Availability Statement}\ \  Data sharing not applicable to this article as no datasets were generated or analyzed during
the current study.
%%%%%%%%%%%%%%%%%%%%%%%%%%%%%%%%%%

%%%%%%%%%%%%%%%%%%%%%%%%%%%%%%%%%%%%%%%%%%%%%%%%%%%%%%%%%%%%%%%%%%%%%%%%%%%%%%%%%%%%%%%%%%%%%%%%%%%%%%%%%%%%%%%%%%%%%

\end{document}